\documentclass[11pt, letterpaper, oneside, reqno]{amsart}

\headheight=8pt     \topmargin=0pt \textheight=624pt
\textwidth=432pt \oddsidemargin=18pt \evensidemargin=18pt

\usepackage{latexsym, amsmath, amssymb, amsfonts, amscd, amsthm,bm}
\usepackage{amsthm}
\usepackage{t1enc}
\usepackage[mathscr]{eucal} 
\usepackage{indentfirst}
\usepackage{graphicx, pb-diagram}
\usepackage{fancyhdr}
\usepackage{fancybox}
\usepackage[shortlabels]{enumitem}
\usepackage{xcolor}
\usepackage[all]{xy}
\usepackage{xspace}
\usepackage{tikz}
\usepackage{tikz-cd}
\usetikzlibrary{matrix}
\usepackage{mathrsfs}
\usepackage{relsize}
\usepackage{url}
\usepackage{hyperref}

\theoremstyle{plain}
\newtheorem{thm}{Theorem}[section]
\newtheorem*{thm*}{Theorem} 

\newtheorem{prop}[thm]{Proposition}

\newtheorem{lemma}[thm]{Lemma}
\newtheorem{cor}[thm]{Corollary}

\theoremstyle{definition}
\newtheorem{defi}[thm]{Definition}

\theoremstyle{remark}
\newtheorem{remark}[thm]{Remark}
\newtheorem{ex}[thm]{Example}

\usepackage{mdframed}
\usepackage{lipsum}

\newenvironment{fthm}
  {\begin{mdframed}\begin{thm}}
  {\end{thm}\end{mdframed}}

\newcommand{\ZZ}{\ensuremath{\mathbb Z}}

\newcommand{\RR}{\ensuremath{\mathbb R}}
\newcommand{\g}{\ensuremath{\mathfrak{g}}}

\newcommand{\h}{\ensuremath{\mathfrak{h}}}
\newcommand{\vX}{\ensuremath{\mathfrak{X}}}

\newcommand{\li}{\ensuremath{L_{\infty}}}
\newcommand{\ad}{\ensuremath{\mathrm{ad}}}
\newcommand{\der}{\ensuremath{\mathrm{der}}}
\newcommand{\id}{\ensuremath{\mathrm{id}}}
\newcommand{\Span}{\ensuremath{\mathrm{Span}}}

\newcommand{\otimesM}{\otimes_{C^\infty(M)}} 

\newcommand{\cN}{\mathcal{N}}

\newcommand{\cM}{\mathcal{M}}

\newcommand{\cA}{\mathcal{A}}
\newcommand{\cO}{\mathcal{O}}

\newcommand{\cE}{\mathcal{E}}
\newcommand{\cU}{\mathcal{U}}

\newcommand{\cV}{\mathcal{V}}

\newcommand{\Ver}{\mathrm{Ver}}

\newcommand{\Lie}{\mathcal{L}}
\renewcommand{\d}{\mathrm{d}}

\newcommand{\pd}[1]{{\partial_{#1}}} 

\newcommand{\End}{\mathrm{End}}

\newcommand{\CDO}{\mathrm{CDO}}
\newcommand{\WDGLA}{\mathscr{D}}
 

 \definecolor{darkgreen}{rgb}{0.0, 0.17, 0.1}

\definecolor{forest}{rgb}{0,0.5,0}

\newcommand{\la}{\langle}
\newcommand{\ra}{\rangle}

\newcommand{\R}{\mathbb{R}}


\DeclareMathOperator{\Hom}{Hom}
\newcommand{\linfty}{\ensuremath{L_{\infty}}}
\definecolor{mygray}{rgb}{0.90,0.90,0.95}



\begin{document}

\title{$L_{\infty}$-actions of Lie algebroids}
  
\author{Olivier Brahic}
\email{olivier@ufpr.br}  
\address{Departamento de Matem\'atica - UFPR
Centro Polit\'ecnico - Jardim das Am\'ericas
CP 19081
CEP:81531-980   Curitiba - Paran\'a, Brazil}

\author{Marco Zambon}
\email{marco.zambon@kuleuven.be}
\address{KU Leuven, Department of Mathematics, Celestijnenlaan 200B box 2400, BE-3001 Leuven, Belgium.}

\subjclass[2010]{}


\begin{abstract}
{
We consider homotopy actions of a Lie algebroid on a graded manifold, defined as suitable $L_{\infty}$-algebra morphisms. On the ``semi-direct product'' we construct a homological vector field   that projects to the Lie algebroid. Our main theorem states that this construction is a bijection. 

{Since several  classical geometric structures can be described by homological vector fields {as above}, we can display many explicit examples, involving Lie algebroids (including extensions, representations up to homotopy and their  cocycles) as well as transitive Courant algebroids}.
}
 
\end{abstract}
\maketitle

\setcounter{tocdepth}{1} 
\tableofcontents

\section*{Introduction}

{
Many structures in differential geometry and mathematical physics can be described using the language of graded geometry. Often they are described by a graded manifold endowed with a homological vector field, and come together with a map preserving the latter. For instance Lie algebroids (and their extensions, representations up to homotopy, cocycles,...) can be described in this way, and Courant algebroids too.}{}

{
Our main construction encodes such data as an ``infinitesimal action up to homotopy''. The latter is given by a relatively simple kind of $L_{\infty}$-algebra morphism, 
whose components are typically described explicitly by classical differential geometric objects (connections, differential forms...). 
In some of our examples we recover known compatibility equations relating the various components, but now these equations acquire a new interpretation: they are the structure equations of an $L_{\infty}$-algebra morphisms.}
Below we list such examples, {which are phrased in classical terms without reference to graded geometry:}
\begin{enumerate}
	\item There is a canonical representation of $TM$ on the trivial bundle $\RR\times M$, due to the fact that vector fields on $M$ are the same as derivations of the algebra of functions $C^\infty(M)$. Now this action can be 'twisted' by any closed $n$-form so as to obtain an $L_\infty$-action of $TM$ on $\RR\times M$. In other words, the closedness of a differential form can be seen as the structure equation of an $L_\infty$-action. This construction further generalizes to Lie algebroid cocycles with values in a representation, as well as representations up to homotopy (Prop. \ref{prop:inftyrep} and Cor. \ref{cor:inftyrepmor}) together with their cocycles (see Thm. \ref{cor:finftyfn} {and Cor. \ref{cor:inftyrepmorcocycle}}, and in the context of exact Courant algebroids, Prop. \ref{prop:exactexpl}). 
	
\item A transitive Lie algebroid $A\to M$ together with a splitting $\sigma_A:TM\to A$ of its anchor $\rho:A\to TM$ is known to induce a $TM$-connection on the bundle of Lie algebras $\g_M:=\ker \rho$, together with a $\g_M$-valued $2$-form on $M$.  There are well known compatibility conditions between $\nabla$ and $\omega$,  that encode the Jacobi identity on $A$. Our construction displays these equations as the structure equations of an $L_\infty$-action of $TM$ on the bundle of Lie algebras $\g_M$ (Prop. \ref{prop:liclasstransLA}).

\item There is a similar construction for any transitive Courant algebroid $E\to M$, obtained by choosing an isotropic splitting $\sigma_E$ of its anchor. In fact, $\sigma_E$ descends to a splitting of the underlying Lie algebroid $A:=E/(\ker \rho)^\perp$, so we recover a connection $\nabla$ and a $\g_M$-valued $2$-form as above. There are however extra data appearing:  an invariant pairing $\la\ ,\ \ra_{\g_M}$ on $\g_M$ and a $3$-form ${H}$ on $M$, together with compatibility conditions {relating} all of $\nabla$, $\omega$, $\la\ ,\ \ra_{\g_M}$ and ${H}$. Again, all these equations fit into those of an $L_\infty$-action, of $TM$ on the bundle of quadratic Lie algebras $(\g_M,\la\ ,\ \ra_{\g_M})$ (see  Prop. \ref{prop:tr-CASE-Loo-map}).
\end{enumerate}

\subsection*{The main construction}
The following construction is well-known. Given  a Lie algebra  $\g$  and a manifold $M$, an infinitesimal action of $\g$ on $M$ (\emph{i.e.} a Lie algebra map $\phi\colon \g\to \vX(M)$) can be encoded by its transformation Lie algebroid, namely     $\g\times M\to M$, with bracket of constant sections given by the Lie bracket of $\g$, and with anchor given by $\phi$.
In turn, Lie algebroids are in bijective correspondence with  degree $1$ manifolds endowed with a homological vector field.
This way we obtain a bijection between:
\begin{itemize}
\item infinitesimal actions of $\g$ on $M$
\item Lie algebroid structures on the vector bundle  $\g\times M\to M$ such that the projection to $\g$ is a Lie algebroid morphism,
\item homological vector fields on the N-manifold
 $$\g[1]\times M$$
  for which the projection to $\g[1]$ preserves homological vector fields.
\end{itemize}

 
Recall that a {\em Lie algebroid} $A$ over a manifold $M$ is a vector
bundle over $M$, such that the global sections of $A$ form a Lie
algebra with Lie bracket $[\cdot,\cdot]_A$ and the  Leibniz rule holds:
\[ [a, fb]_A=f[a, b]_A + \rho(a)(f) b , \quad a, b \in \Gamma(A), f
\in C^\infty(M), \]where $\rho: A \to TM$  is
  a vector
bundle morphism called the   {anchor}. 
An \emph{$N$-manifold} should be thought of as a ``smooth space'' whose coordinates are assigned non-negative degrees (see  Appendix \ref{app:Q} for details).

The aim of this paper is to present a result analogous  to the bijection between the first and last item above, in a general setting. The main case of interest is obtained
considering a Lie algebroid\footnote{More generally, we prove our results for Lie $n$-algebroids (see  Appendix \ref{app:Q}), recovering Lie algebroids for $n=1$.}
 $A\to M$ and
 an $N$-manifold $\cM$,
whose  ``degree zero part'' (the  underlying differentiable manifold) is $M$.
In this case, our main result Thm. \ref{charvf} reads:

\begin{thm*}
 There is a one-to-one correspondence between 
     \begin{enumerate}
\item $\linfty$-actions of $A$ on $\cM$
\item  homological vector fields on 
$$A[1]\times_M \cM$$
 for which the projection  to $A[1]$  preserves homological vector fields. 
\end{enumerate} 
\end{thm*}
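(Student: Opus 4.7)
The strategy is to exploit the projection $\pi\colon A[1]\times_M \cM \to A[1]$ in order to decompose a homological vector field $Q$ on $A[1]\times_M \cM$ according to its ``polynomial degree'' in the fiber coordinates of $A[1]\to M$, and to identify the resulting components with the structure maps of an $\linfty$-morphism. Throughout I would denote by $Q_A$ the homological vector field on $A[1]$ encoding the Lie algebroid structure of $A$.

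First I would observe that the graded algebra $C^\infty(A[1]\times_M \cM)$ is canonically isomorphic to $\Gamma(\wedge^\bullet A^*)\otimesM C^\infty(\cM)$. Dually, any vector field on $A[1]\times_M \cM$ which is $\pi$-vertical (i.e.\ annihilates $\pi^*C^\infty(A[1])$) admits a polynomial expansion $V=\sum_{k\ge 0} V_k$ with $V_k\in \Gamma(\wedge^k A^*)\otimesM \vX(\cM)$. By graded skew-symmetrization each $V_k$ corresponds to a $C^\infty(M)$-multilinear graded skew-symmetric map $\phi_k\colon \Gamma(\wedge^k A)\to \vX(\cM)$ of degree $1-k$, which is exactly the expected form of the $k$-th component of an $\linfty$-morphism from the Lie algebra $\Gamma(A)$ to the graded Lie algebra $\vX(\cM)$.

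Next, the projection condition in (2) is equivalent to $Q=\widetilde{Q_A}+V$, where $\widetilde{Q_A}$ is a fixed lift of $Q_A$ (constructible using any linear connection on $A$) and $V$ is $\pi$-vertical. Thus, up to the canonical choice of the lift, the data of such a $Q$ is equivalent to the collection $\{\phi_k\}_{k\ge 1}$. The heart of the argument is then to unpack $[Q,Q]=0$ in these components: expanding by polynomial degree in $A[1]$ gives, for each $k\ge 1$, an equation of the form
\[
[\widetilde{Q_A},V_k]\ +\ \tfrac12\sum_{i+j=k}[V_i,V_j]\ =\ 0.
\]
A term-by-term translation, using the Lie bracket on $\Gamma(A)$ and the graded commutator on $\vX(\cM)$, identifies these with the standard $\linfty$-morphism relations. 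Conversely, any $\linfty$-action $\{\phi_k\}$ reassembles into such a $Q$, and the same identity yields $Q^2=0$ together with the projection property.

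The main obstacle is the sign and degree bookkeeping: one must align the graded-skew conventions for the $\phi_k$, the internal grading on $\cM$, and the degree shift built into $A[1]$, and verify that the Leibniz-type terms in $[\widetilde{Q_A},V_k]$ involving the anchor $\rho$ recombine exactly with the Koszul signs of the $\linfty$-relations, rather than contributing spurious corrections. This is most cleanly handled in local coordinates adapted to a splitting of $A\to M$ and a homogeneous coordinate system on $\cM$; once the low-order components are verified, the higher components follow from the same pattern. For the Lie $n$-algebroid generalization indicated in the footnote, the only new feature is that $\Gamma(A)$ itself carries higher brackets, which introduce extra terms in $[\widetilde{Q_A},V_k]$ that combine predictably with the higher $\phi_j$ to yield precisely the $\linfty$-morphism relations between two $\linfty$-algebras.
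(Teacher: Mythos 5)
Your overall strategy -- expand a homological vector field on $\cN=A[1]\times_M\cM$ by polynomial degree in the fiber coordinates of $A[1]$ and match $[Q,Q]=0$ against the $\linfty$-morphism relations -- is the right idea and is essentially what the paper does. However, the way you set up the decomposition has a genuine gap. You write $Q=\widetilde{Q_A}+V$ with $\widetilde{Q_A}$ a connection-dependent lift of $Q_{A[1]}$ and $V$ $\pi$-vertical, and then expand $[Q,Q]=0$ into $[\widetilde{Q_A},V_k]+\tfrac12\sum_{i+j=k}[V_i,V_j]=0$. This omits the term $\tfrac12[\widetilde{Q_A},\widetilde{Q_A}]$, which does \emph{not} vanish for a generic lift: the curvature of the chosen connection enters, and your componentwise equations are therefore wrong as stated unless the connection is flat. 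Moreover, with $V$ required to be $\pi$-vertical, the first component $\phi_1$ takes values in $\vX^{vert}(\cM)$ and hence cannot satisfy condition ii) of Def.~\ref{def:liac} ($F_1(a)|_M=\rho(a)$); what you would obtain is a connection-dependent collection of maps, not the canonical bijection with $\linfty$-actions that the theorem asserts.

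The paper sidesteps both problems by decomposing relative to the \emph{product} $A[1]\times\cM$ rather than relative to the fibration $\cN\to A[1]$: it writes $Q_{tot}=Q_{A[1]}|_{\cN}+X$ where $Q_{A[1]}$ is extended trivially to the product (so it is canonical and squares to zero) and $X\in\Gamma(S(A[1]^*))\otimesM\vX(\cM)$ is a vector field \emph{supported on} $\cN$ but not tangent to it; the anchor condition on $F_1$ and the verticality of $F_0$ then appear precisely as the condition that $Q_{A[1]}|_{\cN}+X$ be tangent to $\cN$ (Lemma~\ref{lem:bijphi}). A further point you underestimate as ``sign bookkeeping'': brackets of elements of $\Gamma(S(A[1]^*))\otimesM\vX(\cM)$ are not computed componentwise for arbitrary extensions -- the paper must choose local frames with constant coefficients and prove Lemma~\ref{lemma:binsuv}, and the remark following that lemma notes explicitly that the formula fails for other extensions. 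So while the skeleton of your argument is sound, the specific decomposition you propose would fail without substantial repair (absorbing the horizontal part of $Q$ into $\phi_1$ and controlling the extension used to compute brackets), which is exactly the content of Lemmas~\ref{lem:bijphi} and~\ref{lemma:binsuv} and Prop.~\ref{prop:limor} in the paper.
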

We provide an explicit construction of the above bijection.
The $\linfty$-actions appearing above are a kind of ``infinitesimal actions up to homotopy'', given by a suitable curved $\linfty$-algebra morphism  {$F$}{} from the Lie algebra $\Gamma(A)$ to the 
 vector fields $\vX(\cM)$. {Conceptually, one can think of passing from $(1)$ to $(2)$ in the above theorem as forming the semi-direct product associated with the action.}{} Notice that $\cM$ itself  ({the N-manifold acted upon}) inherits a ``vertical'' homological vector field, which can be described either as  the restriction of the one on $A[1]\times_M \cM$ or as the zero-th component of $F$.

\smallskip
Our main use of the above theorem is as follows:
there are many natural structures in differential geometry that are encoded as in 
 $(2)$ above. We then apply   the direction  ``$(2)\rightarrow (1)$'' of the Theorem, decomposing the given structure into components that fit together into an $L_{\infty}$-algebra morphism. In specific examples, this 
reveals that known compatibility equations satisfied by the components of the structure 
can be interpreted in terms of an $L_{\infty}$-algebra morphism, 
giving a conceptual explanation for {the}{} former.

\subsection*{Classes of examples}
{
The second part of the paper is devoted to examples of the main theorem, for which we fix a Lie algebroid $A\to M$. We also fix a N-manifold whose body is $M$, which by a version of Batchelor's theorem (see for instance \cite[Thm. 1]{BonavolontaPoncin})  we may assume to be $V[1]$ for some graded vector bundle $V\to M$ concentrated in degrees $\le 0$. 
 $$\SelectTips{cm}{}
\xymatrix{
& V[1] \ar[d]  \\
A[1] \ar[r]& M}
$$

It is easy (typically, by making the choice of a ``splitting'' or a connection) to encode several classes of classical geometric objects in graded geometric terms, as a surjective morphism of N-manifolds with homological vector fields
\begin{equation}\label{eq:introQmor}
(A\oplus V)[1]\to A[1].
\end{equation}  
By our main theorem, it corresponds to an $L_{\infty}$-actions of $A$ on $V[1]$, \emph{i.e.}
 a curved $L_{\infty}$-morphism
\begin{equation}\label{eq:introli}
F\colon \Gamma(A)  \rightsquigarrow \vX(V[1])
\end{equation}
satisfying certain compatibility conditions. 
} We spell out a few special cases of this construction  and the corresponding geometric structures we recover:
\begin{itemize}
\item[a)] {The graded vector bundle}{} \emph{$V$ is an ordinary vector bundle (\emph{i.e.} concentrated in degree zero)}.\\
  Then a morphism \eqref{eq:introQmor} is the same thing as a surjective morphism of Lie algebroids $A\oplus V\to A$, and therefore an \emph{extension of Lie algebroids}. 
By the main theorem, it is encoded by an $L_{\infty}$-action $F$. We remark that the structure of bundle of Lie algebras on the kernel $V$ is determined by the zero-th component of $F$. 
\item[b)]  \emph{The homological vector field on $(A\oplus V)[1]$ is fiberwise linear.}\\
If the homological vector field on $(A\oplus V)[1]$ preserves the functions that are linear along on the fibers of the morphism \eqref{eq:introQmor}, then it corresponds to a \emph{representation up to homotopy} { \cite{rep-hom}\cite{MackenzieVB.RajAlfonsoVBas}} of  $A$ on $V$. By the main theorem, it is encoded by an $L_{\infty}$-action $F$  as in formula \eqref{eq:introli} whose components  are (up to sign) the components of the representation up to homotopy. In particular the structure of  cochain complex on $V$ is given by the zero-th component of $F$. 
Further, for any $n\ge 2$,
 representations up to homotopy together with an $n$-cocyle are encoded by 
$L_{\infty}$-actions on $V[n-1]$ by vector fields which are sums of fiberwise linear ones and vertical  constant ones.
\item[c)] {The graded vector bundle}{} \emph{$V[1]$ is trivial as a bundle of  {graded vector spaces} with homological vector fields}.\\
 This means that we prescribe a homological vector field $Q$ defined on $V[1]$, and  such  that $(V[1],Q)\cong (\g[1]\times M, Q_{\g[1]})$ for some $L_{\infty}$-algebra $\g$. Here  $Q_{\g[1]}$
denotes the homological vector field on $\g[1]$ encoding the $L_{\infty}$-structure on $\g$. 
Using the main theorem one sees that \emph{Maurer-Cartan elements} in   
$\Gamma(\wedge^{\ge 1}A^*)\otimes \vX(\g[1])$, with the DGLA structure induced from $\vX(\g[1])$, correspond to  $L_{\infty}$-actions of $A$ on $\g[1]\times M$ compatible with $Q_{\g[1]}$. 
 \end{itemize}
We display these three classes of examples respectively in \S \ref{sec:algebroids} (see Prop. \ref{prop:sumsurjLA}), in \S \ref{sec:special} and
\S \ref{sec:inftyrep} (see Prop. \ref{prop:inftyrep} and Thm. \ref{cor:finftyfn}),
and in \S \ref{sec:vs} (see Prop. \ref{prop:alphaaction}).
The intersection between {a) and b)}  above are Lie algebroid representations with a 2-cocycle, see the introduction to \S \ref{sec:algebroids} and Prop. \ref{lem:n1}.
\bigskip

{
Courant algebroids play a major role in differential geometry and physics, as they are the structures underlying Dirac geometry and generalized complex geometry, and admit a characterization in terms of graded geometry. Courant algebroids  with transitive anchor provide examples for the main theorem, in various ways:
\begin{itemize}
\item[d)] A transitive Courant algebroid $E$ over $M$, together with an isotropic splitting of the anchor  $\rho:E\to TM$  and a connection on $E$,
induces an $L_{\infty}$-action of $TM$ on  
$(\g_M[1]\oplus T^*[1]M)\oplus T^*[2]M$, where $\g_M:=\ker \rho/(\ker \rho)^\perp$ is the associated bundle of quadratic Lie algebras. 
In the case of exact Courant algebroids this recovers the coadjoint representation up to homotopy together with a cocycle obtained in   {\cite{CCShengHigherExt}}.
\item[e)]  The  only instance in the paper in which 
  we use the direction ``$(2)\rightarrow (1)$'' in the main theorem is the following.
A transitive Courant algebroid $E$, together with an isotropic splitting of the anchor $\rho:E\to TM$, give rise to and is encoded by a  homological vector field on $A[1]\times \RR[2]$ {making it an $\RR[2]$-principal bundle}, where $A:= E/(\ker \rho)^\perp$ is the underlying Lie algebroid.    This {recovers the characteristic class of \cite{ChenStienonXu}} and allows us to interpret the Pontryagin class of a  {bundle of quadratic Lie algebras}  as the obstruction for a lifting problem of certain $L_\infty$-actions.
\end{itemize}
}

This is explained respectively in \S \ref{sec:CA} (see Prop. \ref{prop:CAroy})
 and in \S \ref{sec:CAhom}   (see Thm. \ref{thm:tottransCA}).

 \bigskip

\noindent{\bf Comparison with the literature}:
This paper can be considered as an {analogue} 
 of \cite{RajMarco} (see also \cite{ChuangLazarevLiActions} for some special cases), in which $A$ was required to be an $L_{\infty}$-algebra instead of a {Lie algebroid or Lie $n$-algebroid. The proof of our main theorem is also analog to the one of \cite{RajMarco}.} {One of the motivations for {our work} is that it embraces several geometric examples.}{}

{
In a forthcoming work \cite{LiactionsDIff} we plan to generalize the main Theorem (see above) allowing the ``degree zero part'' of the N-manifold $\cM$ to be different from the base of the Lie $n$-algebroid $A$ (we will just require a map from the former  to the latter). The generalized set-up   includes, in particular,
the case of infinitesimal actions of Lie algebras on ordinary manifolds presented at the beginning of this introduction, as well as  
the results of \cite{RajMarco}. 
}

{
We point out that a result of Vitagliano  \cite[Cor. 4.14]{VitReprSHLR} is more general than our main result Thm. \ref{charvf} and even of the set-up of \cite{LiactionsDIff} described above. Indeed, Vitagliano's result is phrased in algebraic terms, and when specialized to a geometric setting \cite[Ex. 4.12]{VitReprSHLR} it allows for $\ZZ$-graded manifolds (hence, not just N-manifolds) fibered over other $\ZZ$-graded manifolds (in particular, not just over ordinary manifolds).  Vitagliano's result  provides a bijection with \emph{actions of $LR_{\infty}[1]$ algebras} as defined in  \cite[Def. 4.9]{VitReprSHLR}, which are  specific kinds of \emph{left connections  with vanishing curvature of $LR_{\infty}[1]$ algebras} as defined in  \cite[Def. 4.1]{VitReprSHLR}. On the other hand, our main results provides a  bijection with     $L_{\infty}$-morphisms satisfying certain properties (namely, $L_{\infty}$-actions) and as such, have the advantage of being explicit and in many case are described in terms of classical geometric data. This explains the wealth of concrete examples that we display in the second part of this paper.
{Note   that } the {relation} between Vitagliano's \emph{actions of $LR_{\infty}[1]$ algebras} and $L_{\infty}$-morphisms is not worked out in \cite{VitReprSHLR}. 
} 

{Finally we point out that there are results in the literature that provide bijections in the same spirit as our main theorem (one example pointed out to us is Schlessinger-Stasheff in \cite{SchlessingerStasheff2012}, building on their work on the 1980's). However none of them seem to be stated in terms of $L_{\infty}$-algebra morphisms, whose explicit formulae allow us to provide a variety of geometric examples.}
\bigskip
 
\noindent{\bf Acknowledgements:}
We thank Alexei Kotov, Dmitry Roytenberg, Jim Stasheff and Luca Vitagliano for comments that allowed us to improve this paper.

Both authors acknowledge support of the Pesquisador Visitante Especial grant  number 88881.030367/2013-01 (CAPES/Brazil). 
The work of M.Z. was partially supported by grants  
MTM2011-22612 and ICMAT Severo Ochoa  SEV-2011-0087 (Spain), IAP Dygest (Belgium) and the long term structural funding -- Methusalem grant of the Flemish Government.  M.Z. thanks UFPR Curitiba for  hospitality during the preparation of this paper.

 \vspace{5mm}
\part{The main construction}

{The purpose of the first part of the paper is to present our main theorem, namely Thm. \ref{charvf} in \S \ref{sec:mainthm}. In \S \ref{sec:actions} we introduce the notion of $L_{\infty}$-action which is needed to state the theorem. In order to clarify the main theorem, in \S \ref{sec:comp} we compare our construction  with other constructions found in the literature.} 
   
\section{$L_{\infty}$-actions}\label{sec:actions}

{In this section we introduce  {the notion of $L_{\infty}$-action appearing in}{} one of the two sides of the bijection stated in Thm. \ref{charvf}.}

Let $A\to M$ be a Lie algebroid, with Lie bracket $[\ ,\ ]_A$ and anchor $\rho$,
and $\cM$ be an $N$-manifold with body $M$.  {We denote by $C(\cM)$ the graded algebra of functions on $\cM$, and by $\vX(M)$ the graded Lie algebra of derivations of $C(\cM)$.
We refer the reader to Appendix \ref{app:Q} for the basics on N-manifolds and homological vector fields.
}

\begin{defi}\label{def:liac}
An \emph{$L_{\infty}$-action of $A$ on $\cM$ } is a curved $L_{\infty}$-morphism
$$F\colon (\Gamma(A),[\ ,\, ]_A) \rightsquigarrow (\vX(\cM),  [\ ,\, ])$$
satisfying the following conditions:
\begin{itemize}
\item[i)] $F$ is $C^{\infty}(M)$-multilinear: $F(fw)=f F(w)$ for all $w\in  \Gamma(\wedge A)$ and $f\in C^{\infty}(M)$  \
\item[ii)] the first component  $F_1\colon\Gamma(A)\to \vX_0(\cM)$ satisfies 
 $F_1(a)|_M=\rho(a)$ for    all $a\in \Gamma(A)$
\item[iii)]   $F_0(C^{\infty}(M))=0$, where $F_0$ is viewed as an element of $\vX_1(\cM)$.
\end{itemize}
\end{defi}

Notice that $F$ is defined on a  Lie algebra 
and takes values in a graded Lie algebra.
{We refer to   Appendix \ref{app:li} for the details on $L_{\infty}$-algebras and curved $L_{\infty}$-morphisms.}

\begin{remark}

When   $\cM$ is concentrated in degree $0$ (\emph{i.e.} it equals $M$), the only $L_\infty$-action {of $A$ on $\cM$}{} is (the map induced on sections by) the anchor map of $A$. For an arbitrary $N$-manifold $\cM$, let us consider the canonical projection 
$p\colon \cM\to M$  induced by the inclusion $C^{\infty}(M)=C_0(\cM)\hookrightarrow C(\cM)$ and denote by $\vX^{vert}(\cM)$ the space of vector fields on $\cM$ that annihilate $C^{\infty}(M).$ Then one may notice the following facts:
\begin{itemize}
\item $F_0$ applied to the constant function $1$ on $M$ is an element of $\vX_1(\cM)$, which we denote by $Q_{\cM}$. The condition iii)  requires that it lies in $\vX_1^{vert}(\cM)$. Further,  the curved $L_{\infty}$-morphism condition implies that $Q_\cM$ is a homological vector field, by   Lemma \ref{lemma:curved}.
\item  $F_1$ takes values in $\vX_{0}(\cM)$, and elements of $\vX_{0}(\cM)$ are always 
{tangent to the body $M$.  Condition ii) in Def. \ref{def:liac} is a condition on the restriction to $M$.}
\item for $k\ge 2$, $F_k$ 
takes values in 
$\vX_{1-k}(\cM)=\vX^{vert}_{1-k}(\cM)$.
\end{itemize}  
\end{remark}
 
\begin{remark}
{
The condition i) in Def. \ref{def:liac} does not imply automatically that 
conditions ii) and iii)  hold, however the latter two conditions   are natural in view of i). This is meant in the same sense as a bracket-preserving bundle map $\varphi$ (covering the identity) between Lie algebroids does not {automatically preserve} the anchors in general, but it does at points where $\varphi$ is not the zero map.
} 
\end{remark}

In many situations, $\cM$ already comes equipped with a homological vector field $Q_{\cM}$, \emph{i.e.} $Q_{\cM}$ has degree one is and commutes with itself. Throughout the whole paper,
{by \emph{$Q$-manifold} we will always mean an $N$-manifold endowed with a homological vector field.}

\begin{defi}\label{def:action}
Let $A\to M$ be a Lie algebroid, and let $(\cM, Q_{\cM})$ be a $Q$-manifold with body $M$.
An {$\linfty$-action} $F$ of $A$ on $\cM$ is \emph{compatible with $Q_\cM$} if the zero-th component $F_0$ equals $Q_\cM$.
\end{defi}
 
\begin{prop}\label{prop:liacQ}
Let $A\to M$ be a Lie algebroid, and let $(\cM, Q_{\cM})$ be a $Q$-manifold with body $M$. Assume that $Q_{\cM}\in \vX_1^{vert}(\cM)$.

Then the higher\footnote{I.e., all components except for the zero-th.} components of an $L_{\infty}$-action of $A$ on $\cM$ compatible with $Q_\cM$ {define} an $L_{\infty}$-morphism 
\begin{align}\label{prop:compatible1}
F\colon\bigl(\Gamma(A),[\ ,\,]_A\bigr)\rightsquigarrow \bigl(\vX(\cM), -[Q_{\cM},\ ], [\ ,\,]\bigr)
\end{align}
such that 
\begin{itemize}
\item[i)] $F(fs)=f F(s)$ for all $s\in  \Gamma(\wedge A)$ and $f\in C^{\infty}(M)$  \
\item[ii)] the first component  $F_1\colon\Gamma(A)\to \vX_0(\cM)$ satisfies $ F_1(a)|_M=\rho(a)$ for    all $a\in \Gamma(A)$.
\end{itemize}
Conversely, given an $L_\infty$-morphism $F$ as in \eqref{prop:compatible1}  satisfying the conditions $i)$ and $ii)$, one obtains  an $L_\infty$-action compatible with $Q_\cM$: the higher components  are given by $F$  and   the zero-th component is given by $Q_\cM$.  
\end{prop}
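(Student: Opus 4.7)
The plan is to unpack the structure equations of a curved $L_{\infty}$-morphism from a Lie algebra to a graded Lie algebra (as recalled in Appendix~\ref{app:li}), single out the role played by the zero-th component, and observe that fixing $F_0 = Q_{\cM}$ converts these equations into the structure equations of an (uncurved) $L_{\infty}$-morphism into the twisted DGLA $\bigl(\vX(\cM), -[Q_{\cM},\,\cdot\,],[\,\cdot\,,\,\cdot\,]\bigr)$. This is essentially an instance of the general fact that Maurer--Cartan elements in a graded Lie algebra twist it into a DGLA, and that curved $L_{\infty}$-morphisms with prescribed constant term correspond bijectively to $L_{\infty}$-morphisms into the twisted DGLA.

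More concretely, I would first record that, since $Q_{\cM}\in \vX_1^{vert}(\cM)$ is homological, it satisfies $\tfrac{1}{2}[Q_{\cM},Q_{\cM}]=0$, hence is a Maurer--Cartan element of $\vX(\cM)$ and the operator $-[Q_{\cM},\,\cdot\,]$ squares to zero and is a graded derivation of the bracket; this makes $\bigl(\vX(\cM), -[Q_{\cM},\,\cdot\,],[\,\cdot\,,\,\cdot\,]\bigr)$ into a DGLA. Next I would write out the defining equation of a curved $L_{\infty}$-morphism applied to $a_1,\dots,a_n\in\Gamma(A)$: apart from the terms coming from $F_k$ with $k\ge 1$ and the binary bracket of $\vX(\cM)$, there appears a single term involving $F_0=Q_{\cM}$, namely a bracket $[Q_{\cM},F_n(a_1,\dots,a_n)]$. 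Moving this term to the other side with the appropriate sign yields exactly the $n$-th structure equation of an $L_{\infty}$-morphism from the Lie algebra $(\Gamma(A),[\,\cdot\,,\,\cdot\,]_A)$ to the above DGLA. The equation obtained for $n=0$ is the Maurer--Cartan condition on $Q_{\cM}$ and is therefore automatic.

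Then I would match the side conditions. Condition~iii) of Def.~\ref{def:liac} asks precisely that $F_0=Q_{\cM}$ annihilates $C^\infty(M)$, i.e.\ $Q_{\cM}\in\vX_1^{vert}(\cM)$, which is a hypothesis of the proposition, and therefore does not constrain the higher components. Conditions~i) and~ii) of Def.~\ref{def:liac} involve only the higher $F_k$ (for $k\ge 1$) and translate verbatim into conditions~i) and~ii) of the proposition. Conversely, given an $L_{\infty}$-morphism $F$ into the twisted DGLA satisfying i) and ii), one defines a curved $L_{\infty}$-morphism by declaring its zero-th component to be $Q_{\cM}$ and its higher components to be those of $F$; the same computation as before shows that the curved $L_{\infty}$-morphism equations are satisfied, and the extra conditions of Def.~\ref{def:liac} are immediate.

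The only real subtlety to check carefully is the sign in front of $[Q_{\cM},\,\cdot\,]$ in the twisted differential, which depends on the sign conventions fixed in Appendix~\ref{app:li} for the curved $L_{\infty}$-morphism equations; everything else is a bookkeeping exercise. I expect this sign matching to be the main (minor) obstacle, together with verifying that $F_1$ still takes values in $\vX_0(\cM)$ and that condition~i) makes sense on both sides given that the $L_{\infty}$-morphism equations involve symmetric, rather than skew-symmetric, inputs from $\Gamma(A)$ after the standard d\'ecalage.
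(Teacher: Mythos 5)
Your proposal is correct and follows essentially the same route as the paper: the paper packages your ``single out the $F_0$-term and twist'' computation as Lemma \ref{lemma:curved} applied after passing through the d\'ecalage isomorphism, which is also exactly where the sign in $-[Q_{\cM},\,\cdot\,]$ that you flag comes from. The matching of conditions i)--iii) and the converse direction are handled just as you describe.
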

  
Notice that  the image of $F$ lies in the sub-DGLA $$\vX_{<0}(\cM)\oplus \{X\in \vX_0(\cM): [Q_{\cM},X]=0\},$$ as a consequence of the fact that $F_1$ is a chain map.
\begin{proof}[Proof of Prop. \ref{prop:liacQ}.]
Given an $\linfty$-action  $F$ of $A$ on $\cM$,
{consider the $L_{\infty}[1]$-morphism from $\Gamma(A)[1]$ to $(\vX(\cM)[1],   \{\ ,\ \})$ corresponding to $F$. Here $\{\ ,\ \}$ is the bracket corresponding to $[\ ,\,]$ under the {d\'ecalage} isomorphism \eqref{deca} in Appendix \ref{app:li}.}  
By Lemma \ref{lemma:curved} 
the higher components of {the curved $L_{\infty}[1]$-morphism corresponding to $F$}  are a (noncurved) $\linfty[1]$-algebra morphism from $\Gamma(A[1])$ to  $(\vX(\cM)[1], \{Q_\cM, \ \}, \{\ ,\, \})$, an $L_{\infty}[1]$-algebra whose only non-trivial multibrackets are the unary and binary one.
Therefore they correspond to a (noncurved)
$\linfty$-algebra morphism from $\Gamma(A)$ to the DGLA $(\vX(\cM), -[Q_\cM, \ ], [\ ,\,])$. {Note here that}{} the minus sign comes from the d\'ecalage isomorphism. The converse implication is proven reversing the argument.
\end{proof}

\begin{remark}\label{rem:linftyMarkl}
We spell out the condition that $F$ be an $L_{\infty}$-morphism (see for instance \cite[Def. 5.2]{LadaMarkl}): for all $n\ge 1$ and $a_1, \dots, a_n \in \Gamma(A)$,
\begin{align}\label{eq:Aloo}
&
 \sum_{1\le i<j\le n} (-1)^{i+j+1}F_{n-1}\bigl([a_i,a_j],a_1,\dots,\widehat{a_i},\dots,
\widehat{a_j},\dots,a_n\bigr)=\\
 &
 d_{Q_{\cM}}\bigl(F_n({a_1},\!...,a_n)\bigr)
 +\sum_{s=1}^{n-1}\sum_{\substack{\tau\in Sh(s,n-s)\\ \tau(1)<\tau(s+1)}}
\!\!(-1)^{\tau}(-1)^{s-1}\left[F_s(a_{\tau(1)},\!...,a_{\tau(s)}),
F_{n-s}(a_{\tau(s+1)},\!...,a_{\tau(n)})\right]\nonumber
\end{align}
where we write $ d_{Q_{\cM}}=-[{Q_{\cM}},\ ]$, and where $Sh(s,n-s)$ denotes the permutations $\tau$ of $n$ elements such that
 $\tau(1)<\dots<\tau(s)$ and $\tau(s+1)<\dots<\tau(n)$.
\end{remark}

{Notice that the definition of  $\linfty$-action (Def. \ref{def:liac}) extends immediately to any 
 Lie $n$-algebroid $A$  ({see  Appendix \ref{app:Q}}) with the following obvious modifications:   $\Gamma(A)$ by definition has an $L_{\infty}$-algebra structure, so it makes sense to ask that $F$ be
  a curved $L_{\infty}$-morphism from $\Gamma(A)$ to $(\vX(\cM),  [\ ,\,])$,
  and in {the}{} condition ii) one requires {that}{} $a\in \Gamma(A_0)$. }

\section{The main theorem}\label{sec:mainthm}

In this section we derive our main results extending the arguments
of \cite[\S 3,\S 4]{RajMarco}.
The main results, Thm. \ref{charvf} and Thm. \ref{charvfQM}, are stated at {the}{} end of this section.

Let $A\to M$ be a {Lie $n$-algebroid}, with  and anchor $\rho\colon A_0\to TM$.
Let $\cM$ be an $N$-manifold with body $M$  (hence the "fiber coordinates'' of $\cM$ have degrees $>0$). We have a natural projection\footnote{Here we use the fact that $\cM$ is an N-manifold rather than an arbitrary  $\ZZ$-graded manifold.} 
 $p\colon \cM\to M$ and an embedding $M\hookrightarrow \cM$, {see  Appendix \ref{app:Q}}.

We will consider the product manifold $A[1]\times \cM$ and its submanifold
$$\cN:=A[1]\times_M \cM$$
where the fiber product is taken over the vector bundle projection $A[1]\to M$
and over $p\colon \cM\to M$. The body of $\cN$ is the diagonal $\Delta M \subset M\times M$.

\begin{remark}\label{rem:aftercN}
1) {We have $\cN=p^*(A[1])$, the pull-back of the graded vector bundle $A[1]\to M$ along the map $p$, as in the diagram below:
\begin{equation*}
\SelectTips{cm}{}
\xymatrix{
\cN\ar[d]  \ar[r] & \cM\ar[d]^p \\
A[1] \ar[r]^{\pi}& M}
\end{equation*}
Hence  $\cN$ is a vector bundle over $\cM$, with zero section equal to $\varepsilon(\cM)$, for the  
  natural embedding $\varepsilon\colon \cM\to A[1]\times \cM$.}

2) We describe in coordinates the embeddings
$$\varepsilon(\cM)\;\subset\; \cN\;\subset\; A[1]\times \cM.$$ 
On $A[1]\times \cM$ we can choose coordinates $x,\xi$ on $A[1]$ (where the $x$ have degree $0$ and are coordinates on the body $M$, while $\xi$ are fiber coordinates\footnote{When $A$ is a Lie algebroid, the coordinates $\xi$  have degree $1$.}), and coordinates $x,\eta$ on $\cM$ (where the $y$ have degree $0$ and are coordinates on the body $M$, while $\eta$ denotes the coordinates of positive degree). Then locally $$\cN=\{x=y\},\quad \quad\quad\text{           }\quad\quad\quad\varepsilon(\cM)=\{x=y, \xi=0\}.$$
\end{remark}

\subsection{Part I of the proof of the main theorem}
Recall that the  functions on $A[1]$ are given by
 $C(A[1])=\Gamma(S(A[1])^*)$.
We do not provide a proof for the following lemma, which is straight-forward and generalizes the well-known fact that differential $k$-forms on a manifold $M$ are the same thing as $C^{\infty}(M)$-multilinear maps $\Gamma(\wedge^k TM)\to C^{\infty}(M)$.

\begin{lemma}\label{lem:bijeasy}
{There is a bijection between
\begin{itemize}
\item Degree $1$ elements $$X\in \Gamma(S(A[1]^*)\otimesM\vX(\cM),$$ 
\item $C^{\infty}(M)$-multilinear maps of degree $0$
$$\Phi\colon \Gamma(S(A[1]))\to \vX(\cM)[1].$$ 
\end{itemize}
The bijection is given by $X\mapsto \phi_X$ where 
\begin{equation}\label{eq:phi}
(\phi_X)_k({s_1,\dots,s_k})  :=\bigl[[\,\cdots [X, \iota_{s_1}],\dots],\iota_{s_k}\bigr]|_{\cM} 
\end{equation}
for all $k\ge 0$ and $s_i \in \Gamma(A[1])$. 
Here $|_{\cM} $ denotes the restriction to $\cM$.
 By definition, the brackets on the right side of \eqref{eq:phi} are given by\footnote{Upon choosing a suitable extension of $X$ to a vector field on 
 $A[1]\times \cM$, this is the graded Lie bracket on $\vX(A[1]\times \cM)$, see also Rem. \ref{rem:vfonN} 2).
Because of this, the definition of $\phi_X$ we give here differs in the signs from the one given in an analogous case in \cite[\S3]{RajMarco} (there  $\vX(\cM)[1]$ was used).}
   $[{Y}, {\iota_s}] = - (-1)^{|Y||s|} \sum_j(\iota_s f_j)\otimesM v_j$ 
where $Y=\sum_j f_j\otimesM v_j$ for $f_j\in \Gamma(S(A[1]^*))$ and $v_j\in \vX(\cM)$. 
}
\end{lemma}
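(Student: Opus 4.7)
The plan is to reduce the statement to the classical duality between polynomials and symmetric multilinear forms, in the graded setting and tensored with the $C^\infty(M)$-module $\vX(\cM)$.

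Both sides of the claimed bijection decompose naturally by arity. On one hand, $\Gamma(S(A[1]^*)) = \bigoplus_{k\ge 0}\Gamma(S^k(A[1]^*))$, and since $A[1]^*$ sits in degree $+1$ fiberwise, a degree $1$ element $X$ decomposes uniquely as $\sum_k X_k$ with $X_k\in \Gamma(S^k(A[1]^*)\otimesM \vX_{1-k}(\cM))$. On the other hand, a $C^\infty(M)$-multilinear map $\Phi$ of degree $0$ is determined by its components $\Phi_k \colon \Gamma(A[1])^{\times k}\to \vX_{1-k}(\cM)$, which are graded symmetric in their $k$ arguments. It therefore suffices to establish the bijection arity by arity.

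For fixed $k$ I would invoke the canonical $C^\infty(M)$-linear isomorphism
\[
\Gamma\bigl(S^k(A[1]^*)\otimesM \vX_{1-k}(\cM)\bigr) \;\cong\; \bigl\{\text{$C^\infty(M)$-multilinear graded-symmetric maps }\Gamma(A[1])^{\times k}\to \vX_{1-k}(\cM)\bigr\}.
\]
This is the graded polarization identity applied to the projective $C^\infty(M)$-module $\Gamma(A[1])$: in a local frame $(e_\alpha)$ of $A[1]$ with dual frame $(\xi^\alpha)$, one writes $X_k = \tfrac{1}{k!}\sum \xi^{\alpha_1}\cdots \xi^{\alpha_k}\otimesM Y_{\alpha_1\ldots\alpha_k}$ with $Y$ graded-symmetric in its indices, and sets $\Phi_k(e_{\alpha_1},\ldots,e_{\alpha_k})=Y_{\alpha_1\ldots\alpha_k}$ up to a universal sign; the inverse is given by the same formula.

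Finally I would verify that the iterated bracket formula for $(\phi_X)_k$ implements exactly this polarization. By the sign convention of the footnote, each operator $\ad_{\iota_{s_i}}$ is a graded derivation on $\Gamma(S(A[1]^*)\otimesM \vX(\cM))$ that lowers the polynomial degree in $\xi$ by one and contracts with $s_i$ in the $A[1]^*$ factor. After applying $k$ such brackets to $X$ and restricting to $\cM$ (i.e.\ setting $\xi=0$), the components $X_j$ with $j<k$ are already annihilated and those with $j>k$ still carry positive polynomial degree in $\xi$ and so vanish; the surviving contribution is a universal constant times the coefficient extracted from $X_k$, and graded symmetry in the $s_i$ is automatic from the graded commutativity of the $\iota_{s_i}$. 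The main obstacle I anticipate is the sign bookkeeping, since Koszul signs enter in the definition of $[Y,\iota_s]$, in the symmetry of $S(A[1])$ (which after the shift corresponds to the exterior structure on $A$), and in the décalage between $\vX(\cM)$ and $\vX(\cM)[1]$. Once a consistent convention is fixed, the verification reduces to the cases $k=0,1,2$, after which induction on $k$ is routine.
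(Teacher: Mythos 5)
Your proposal is correct and is exactly the argument the paper has in mind: the paper deliberately omits the proof, describing the lemma as a straightforward generalization of the duality between differential $k$-forms and $C^{\infty}(M)$-multilinear maps $\Gamma(\wedge^k TM)\to C^{\infty}(M)$, and your arity-by-arity polarization over the projective module $\Gamma(A[1])$, followed by the check that the iterated brackets with the $\iota_{s_i}$ and the restriction to $\cM$ extract precisely the arity-$k$ coefficient, is that generalization. The only slip is the parenthetical claim that $A[1]^*$ sits in fiberwise degree $+1$ (so that $X_k$ and $\Phi_k$ land in $\vX_{1-k}(\cM)$): this holds for Lie algebroids but not for general Lie $n$-algebroids, where the fiber coordinates of $A[1]$ range over degrees $1,\dots,n$; the arity decomposition and the bijection are unaffected.
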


\begin{remark}\label{rem:vfonN}
1) 
Geometrically, $X$ is a vector field   on $A[1]\times \cM$ supported on $\cN$, \emph{i.e.} a section of $T((A[1]\times \cM)|_{\cN}$). {This means that it} corresponds to  a map: $$D\colon C(A[1]\times \cM)\to C(\cN)$$ such that
$D(F\cdot G)=D(F)\cdot G|_{\cN}^C+F|_{\cN}^C\cdot D(G)$,
where: $$|_{\cN}^C\colon C(A[1])\otimes_{\RR}C(\cM)\to C(A[1])\otimesM C(\cM)$$ is the natural quotient map. Writing $X=\sum_j f_j\otimes v_j$ as in Lemma \ref{lem:bijeasy}, $D$ is given as follows: it annihilates $C(A[1])$ and $D(\varphi)=
\sum_j f_j\otimesM (v_j(\varphi))$ {for all $\varphi\in C(\cM)$}.

2) The map $\phi_X$   in Lemma \ref{lem:bijeasy} can be alternatively described replacing the r.h.s. of eq. \eqref{eq:phi} by
$$\bigl[\,\cdots [\tilde{X}, \iota_{{s}_1}],\dots],\iota_{s_k}\bigr]|_{\varepsilon(\cM)}$$
where $\tilde{X}$  is an extension of $X$  to a vector field on   $A[1]\times \cM$ which maps to zero under the projection  $A[1]\times \cM\to A[1]$,  the square bracket is the graded Lie bracket on $\vX(A[1]\times \cM)$, and $|_{\varepsilon(\cM)}$ denotes the restriction to ${\varepsilon(\cM)}$ composed with the canonical isomorphism
$(\{0\}\oplus T\cM)|_{\varepsilon(\cM)}\cong T\cM$.
 
 Notice that $\tilde{X}$ can be obtained (locally) by taking  a lift
of $X$
w.r.t. the projection 
\begin{equation}\label{eq:iN}
C(A[1])\otimes_{\RR}\vX(\cM)\to C(A[1])\otimesM\vX(\cM).
\end{equation}
Being a vector field on $A[1]\times \cM$, $\tilde{X}$ corresponds to  a derivation $\tilde{D}$ of $C(A[1]\times \cM)$. {It is related to  the map $D$ above by} $D=|_{\cN}^C\circ \tilde{D}$ (geometrically, $|_{\cN}^C$ is the restriction to $\cN$ of functions).\end{remark}

\begin{lemma}\label{lem:bijphi}
{The  bijection {$X\mapsto \phi_X$} of Lemma \ref{lem:bijeasy} restricts to a bijection between 

\begin{itemize}
\item Degree 1 elements $$X\in \Gamma(S(A[1]^*))\otimesM \vX(\cM)$$ such that $Q_{A[1]}|_{\cN}+X$ is tangent to $\cN$
\item $C^{\infty}(M)$-multilinear maps of degree $0$ $$\Phi\colon \Gamma(S(A[1]))\to \vX(\cM)[1]$$  such that $\Phi_1(a{[1]})|_M=\rho(a)$ for all $a\in \Gamma(A_0)$, and $\Phi_0\in \vX^{vert}_1(\cM)$.
\end{itemize}
 }
\end{lemma}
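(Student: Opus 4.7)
The plan is to work in local coordinates and translate the geometric tangency condition into algebraic conditions on $\Phi_0$ and $\Phi_1$. Take local coordinates $(x^i, \xi^\alpha)$ on $A[1]$ (with $x^i$ coordinates on $M$ and $\xi^\alpha$ of positive degree) and $(y^i, \eta^A)$ on $\cM$ as in Remark~\ref{rem:aftercN}, so that $\cN$ is locally cut out by the ideal $I_{\cN}$ generated by $\{x^i - y^i\}$. A vector field on $A[1]\times\cM$ is tangent to $\cN$ iff it preserves $I_{\cN}$.

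First I would reduce the tangency condition to a single equation. Using the extension $\tilde X$ of $X$ from Remark~\ref{rem:vfonN}~2), which differentiates only in the $\cM$-directions, one has $\tilde X(x^i)=0$; conversely $Q_{A[1]}$ acts only on $A[1]$-coordinates, so $Q_{A[1]}(y^i)=0$. Hence tangency of $Q_{A[1]}|_{\cN}+X$ to $\cN$ is equivalent to
$$Q_{A[1]}(x^i) \;=\; \tilde X(y^i) \quad \text{on } \cN, \quad \text{for all } i.$$
The left-hand side encodes the anchor in graded language: $Q_{A[1]}(x^i)=\rho^i_\alpha\xi^\alpha$, the sum running only over the degree-$1$ coordinates $\xi^\alpha$ corresponding to a local frame of $A_0$ (higher-degree $\xi^\alpha$ cannot contribute by degree counting).

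The second step is to expand $\tilde X(y^i)$ by $\xi$-degree. Since $X$ has total degree $1$, a summand carrying a $\xi^\alpha$-factor of degree $k$ must come with a vector field on $\cM$ of degree $1-k$; applying it to the degree-$0$ function $y^i$ produces a function of degree $1-k$ on $\cM$, which must be nonnegative and hence forces $k\le 1$. The $k=0$ contribution is $\Phi_0(y^i) = X|_{\cM}(y^i)$ by definition~\eqref{eq:phi} of $\phi_X$ with no arguments, while the $k=1$ contribution gives $\xi^\alpha \Phi_1(e_\alpha[1])(y^i)|_M$ for a local frame $\{e_\alpha\}$ of $A_0$; here I would use that the bracket $[X,\iota_{e_\alpha[1]}]$ precisely extracts the $\xi^\alpha$-coefficient of $X$, followed by restriction to $\xi=0$.

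Finally I would match coefficients of powers of $\xi$ in
$$\rho^i_\alpha\xi^\alpha \;=\; \Phi_0(y^i) \,+\, \xi^\alpha \Phi_1(e_\alpha[1])(y^i)\big|_M.$$
The $\xi$-independent part gives $\Phi_0(y^i)=0$ for all $i$, equivalently $\Phi_0\in \vX^{vert}_1(\cM)$, and the $\xi$-linear part gives $\Phi_1(a[1])|_M=\rho(a)$ for every $a\in\Gamma(A_0)$ (since a degree-$0$ vector field on $\cM$ restricted to $M$ is determined by its values on the $y^i$'s). Both steps are reversible, yielding the claimed restriction of the bijection of Lemma~\ref{lem:bijeasy}. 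The main delicate point is the uniform degree-counting that rules out $\xi$-degree $\ge 2$ contributions to $\tilde X(y^i)$ for a Lie $n$-algebroid, but this is immediate once the conventions on $A[1]$ and on $\cM$ (bodies of positive-degree coordinates) are fixed.
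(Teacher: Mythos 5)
Your proposal is correct and follows essentially the same route as the paper's proof: the paper phrases the tangency condition as annihilation of the ideal generated by $f\otimes_\RR 1-1\otimes_\RR f$ for $f\in C^{\infty}(M)$ (your $x^i-y^i$ in coordinates), uses the same degree count to see that only the components $X_0$ and $X_1$ of $X$ can act nontrivially on $C^\infty(M)$, and then separates the $C_1(\cM)$-part from the $C_1(A[1])$-part to obtain exactly your two conditions $\Phi_0(y^i)=0$ and $\Phi_1(a[1])|_M=\rho(a)$. The only difference is cosmetic: you work in local coordinates where the paper argues with arbitrary $f\in C^\infty(M)$ and with the decomposition $X=X_0+X_1+X_2+\cdots$ stated intrinsically.
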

\begin{remark}
{The proof becomes slightly more transparent when $A$ is a Lie algebroid. In that case, the degree $k$ functions on $A[1]$ are given by $C_k(A[1])=\Gamma(S^k(A[1])^*)$.
}
\end{remark}
\begin{proof}[Proof of Lemma \ref{lem:bijphi}] Consider 
$X\in {C(A[1])}\otimesM
\vX(\cM)$ of  degree $1$, 
and decompose it as $X=X_0+X_1+X_2+\dots$,
where $$X_k\in\Gamma\bigl(C_k(A[1])\bigr)\otimesM
\vX_{1-k}(\cM).$$
So for instance $X_0\in \vX(\cM)_{1}$, $X_1\in C_1(A[1])\otimesM
\vX(\cM)_{0}$,\
$X_2\in C_2(A[1])\otimesM
\vX(\cM)_{-1}$.
Since the coordinates of $\cM$ have degrees $\ge 0$, we have that $X_k(C^{\infty}(M))=0$ for $k\ge 2$, whereas  the components
\begin{align*}
X_0(C^{\infty}(M))&\subset C_1(\cM),\\
X_1(C^{\infty}(M))&\subset  C_1(A[1])\otimesM
C^{\infty}(M)\cong C_1(A[1])
\end{align*}
might be non-zero.

On the other hand $Q_{A[1]}$  is a degree 1 vector field on $A[1]$, so $Q_{A[1]}(C^{\infty}(M))\subset C_1(A[1]) $ (this encodes the anchor of $A$).

Let $Y$ be a vector field   on $A[1]\times \cM$ supported on $\cN$. Then $Y$ is tangent to $\cN$ iff, viewed as a derivation
$C(A[1]\times \cM)\to C(\cN)$
(see Rem. \ref{rem:vfonN} 1)), it annihilates the ideal of $C(A[1]\times \cM)$ generated by $f\otimes_{\RR}1-1\otimes_{\RR}f$ where $f\in  
C^{\infty}(M)$. {We are interested in the vector field  $Q_{A[1]}|_{\cN}+X$, for which we have}
$$(Q_{A[1]}|_{\cN}+X)(f\otimes_{\RR}1-1\otimes_{\RR}f)=\left(Q_{A[1]}(f)-X_1(f)\right)-X_0(f).$$
Separating the $C_1(A[1]) $-component from the $C_1(\cM)$-component,  we see that   $Q_{A[1]}|_{\cN}+X$ is tangent to $\cN$ if{f} 
$$X_0(f)=0 \quad\text{   and   }\quad X_1(f)=Q_{A[1]}(f)$$ for all  $f\in  
C^{\infty}(M)$. The latter equation {takes place in $C_1(A[1])={\Gamma((A_0[1])^*)}$} and is equivalent to $(\phi_X)_1({s})|_M=\rho(a)$ for all $a\in \Gamma({A_0})$, {where $s:=a[1]$}. This follows, using the Jacobi identity, from
$(\phi_X)_1(s)|_{\cM}=[X,\iota_s]|_{\cM}=[X_1,\iota_s]$
and $\rho(a)=[Q_{A[1]},\iota_s ]|_M$.  
\end{proof}

 We present an example for Lemma \ref{lem:bijphi}.  
\begin{ex}\label{ex:M}
{
Let $\cM=M$ be an ordinary manifold and, for the sake of simplicity, let $A=TM$. Then necessarily $X=X_1\in \Gamma(A[1]^*)\otimesM\vX(M)$, and under the bijection of Lemma \ref{lem:bijeasy} it corresponds to a map $\Phi=\Phi_1\colon \Gamma(A[1])\to \vX(M)[1].$ 
Choose coordinates on the manifold $M$, giving rise to canonical coordinates $x,\xi$ on $A[1]$ and coordinates $y$ on $\cM=M$. We have
$Q_{A[1]}=\xi_i{{\partial_{x_i}}{}}$ (the de Rham vector field) and $$X=\psi_i^j\xi_i\frac{\partial}{\partial{y_j}}$$ for some functions $\psi_i^j$ on $M$. Further, the map $\Phi$ is determined by its values  $\Phi(a_i{[1]})=\psi_i^j {\partial_{y_j}}{}$ for any $a_i={\partial_{x_i}}{}\in\Gamma(A)$.
}

{
We see that  $Q_{A[1]}|_{\cN}+X$ is tangent to $\cN=\{x=y\}$ if{f} ${\partial_{x_i}}{}+\psi_i^j{\partial_{y_j}}{}$ is tangent to $\cN$ for every $i$, \emph{i.e.} if
$\psi_i^j=\delta_i^j$. This means exactly that $\Phi_1(\cdot)|_M=\id_{TM}$,  the anchor of the tangent Lie algebroid.
}
\end{ex}

{
\begin{remark}
Lemma \ref{lem:bijphi} no longer holds if we allow $\cM$ to be an arbitrary $\ZZ$-graded manifold instead of an N-manifold. This is evident looking at the following variation of Ex. \ref{ex:M}: as there, take $A=TM$, but now $\cM:=\RR[1]\times \RR[-1]\times M$. On $\cM$ we take coordinates $\eta$ of degree $1$ and $\theta$ of degree $-1$, in addition to the coordinates $y$ on $M$. Consider
$$X= \xi_i\left(\frac{\partial}{\partial{y_i}}+\eta\theta\frac{\partial}{\partial{y_i}}\right),$$ an element of
$\Gamma(A[1]^*)\otimes_{C^{\infty}(M)}\vX_0(\cM)$.
Then  $Q_{A[1]}|_{\cN}+X$ is \emph{not} tangent to $\cN=\{x=y\}$. On the other hand,
$\Phi_1(a_i{[1]})={\partial_{y_i}}{}+\eta\theta{\partial_{y_i}}{}$ and hence $\Phi_1(a_i{[1]})|_M={\partial_{y_i}}{}$, showing that $\Phi_1(\cdot)|_M$ is  the anchor of the tangent Lie algebroid. Here $a_i={\partial_{x_i}}{}\in\Gamma(A)$.
\end{remark}
}

\begin{remark}\label{rem:phiXwithQ}
{When $X$ satisfies the assumptions of Lemma \ref{lem:bijphi}, we can give an alternative description of the associated map $\Phi=\phi_X$ as follows:
\begin{equation}\label{eq:phiQ}
(\phi_X)_k({s_1,\dots,s_k})  := pr_{\vX(\varepsilon(\cM))}\left( \bigl[[\,\cdots [Q_{A[1]}|_{\cN}+X, \iota_{s_1}],\dots],\iota_{s_k}\bigr]|_{\varepsilon(\cM)}\right)
\end{equation}
for all $k\ge 0$ and $s_i \in \Gamma(A[1])$. This follows from the characterization of $\phi_X$
in terms of iterated Lie brackets of vector fields on $A[1]\times \cM$ given in Rem. \ref{rem:vfonN} 2),  together with the observation that $Q_{A[1]}$ as no component in the $\cM$-direction.}
 Here  the square brackets denote the Lie bracket of vector fields on $\cN$, and
 the projection is defined as follows. 
 
 We know by Rem. \ref{rem:aftercN}-1) that
 $\cN$ is a vector bundle with zero-section $\varepsilon(\cM)$. The total space of every vector bundle has a natural splitting  (into a horizontal and a vertical component) of its tangent space 
at points of the zero section. In particular, given $Y\in \vX(\cN)$, it makes sense to restrict $Y$ to $\varepsilon(\cM)$ and then take the component tangent to $\varepsilon(\cM)$. We denote the resulting vector field on $\varepsilon(\cM)$
by $pr_{\vX(\varepsilon(\cM))}(Y|_{\varepsilon(\cM)})$.
 \end{remark}

\subsection{Part II of the proof of the main theorem} The tensor product 
$\Gamma(S(A[1]^*)\otimes_{\RR}\vX(\cM)$ ({that we emphasize is taken}{} over $\RR$) agrees with the subspace of elements of $\vX(A[1]\times \cM)$ which project to the zero vector field
under $A[1]\times \cM \to A[1]$, therefore it is a graded Lie algebra. We denote
by $|_{\cN}$ the map \eqref{eq:iN}, which geometrically is the restriction to $\cN$ of vector fields on $A[1]\times \cM$.  

Fix a frame $\{{a}_i\}$ for $A$ (on an open subset $U\subset M$), which gives rise to coordinates $\{\xi_i\}$ on $A[1]$. This provides a specific extension of an element\footnote{We abuse notation by omitting restrictions to $U$: we should write $Y  \in \Gamma(S(A|_U[1]^*)\otimes_{C^{\infty}(U)}\vX(\cM|_U)$.}
$Y  \in \Gamma(S(A[1]^*)\otimesM\vX(\cM)$ to 
$\widetilde{Y}  \in \Gamma(S(A[1]^*)\otimes_{\RR}\vX(\cM)\subset \vX(A[1]\times \cM)$:   write $Y$ as
\begin{equation}
\label{eq:deco}
Y=\sum_j f_j\otimesM v_j
\end{equation}
 where each $f_j$
is a linear combination \emph{with constant coefficients} of products of the $\xi_i$ and where
 $v_j\in \vX(\cM)$, and take
\begin{equation}\label{eq:R}
\widetilde{Y}=\sum_j f_j\otimes_{\RR} v_j
\end{equation}

The following lemma is analogous to \cite[Lemma 3.2]{RajMarco}.
\begin{lemma}\label{lemma:binsuv} 
Let
$Y, Y' \in \Gamma(S(A[1]^*)\otimesM\vX(\cM)$. Fix a frame $\{{a_i}\}$ for $A$ (on an open subset $U\subset M$), {denote $s_i:=a_i[1]$}, and consider the lifts
 $\widetilde{Y}, \widetilde{Y'} \in \vX(A[1]\times \cM)$ given in eq. \eqref{eq:R}. The following equation holds for all\footnote{Here we regard each $s_i$ as an element of $\Gamma(A[1])$.}  subsets of  $\{s_i\}$:
\begin{multline}\label{frame}  
	     \phi_{[\widetilde{Y},\widetilde{Y'}]|_{\cN}}(s_{i_1} \cdots s_{i_n}) = \\
\sum_{l=0}^n \sum_{\tau \in Sh(l,n-l)} \epsilon(\tau)(-1)^{|Y'|(|s_{i_{\tau(1)}}| + \cdots + |s_{i_{\tau(l)}}|)} \bigl[\phi_Y (s_{i_{\tau(1)}} \cdots s_{i_{\tau(l)}}), \phi_{Y'}(s_{i_{\tau(l+1)}} \cdots s_{i_{\tau(n)}}) \bigr].
\end{multline}
Here the square bracket on the l.h.s. is the graded Lie bracket on
 $\vX(A[1]\times \cM)$, while on the r.h.s. it is the one on $\vX(\cM)$.
\end{lemma}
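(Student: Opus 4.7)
The plan is to apply the alternative characterization of $\phi_X$ from Rem. \ref{rem:vfonN} 2): namely, $\phi_X(s_{i_1} \cdots s_{i_n})$ is computed as the iterated graded Lie bracket $\bigl[[\cdots[\widetilde{X}, \iota_{s_{i_1}}], \ldots], \iota_{s_{i_n}}\bigr]$ on $\vX(A[1] \times \cM)$, restricted to $\varepsilon(\cM)$, for any extension $\widetilde{X}$ of $X$ with vanishing $A[1]$-direction component. For $X := [\widetilde{Y}, \widetilde{Y'}]|_{\cN}$, a natural such extension is $\widetilde{X} := [\widetilde{Y}, \widetilde{Y'}]$ itself, since neither $\widetilde{Y}$ nor $\widetilde{Y'}$ has $A[1]$-direction components. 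Hence the lemma reduces to an identity in $\vX(A[1] \times \cM)$ for iterated brackets of $[\widetilde{Y}, \widetilde{Y'}]$ with the interior products $\iota_{s_{i_j}}$, subsequently restricted to $\varepsilon(\cM)$.

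The main tool is the graded Jacobi identity, which expresses that $\ad_{\iota_s}$ is a graded derivation of the bracket on $\vX(A[1] \times \cM)$:
$$\bigl[[Z, Z'], \iota_s\bigr] = \bigl[[Z, \iota_s], Z'\bigr] + (-1)^{|Z||s|}\bigl[Z, [Z', \iota_s]\bigr].$$
Applying this identity successively for each of the $n$ interior products distributes them across the pair $(\widetilde{Y}, \widetilde{Y'})$, producing a sum indexed by partitions of $\{1, \ldots, n\}$ into two ordered subsets, equivalently by shuffles $\tau \in Sh(l, n-l)$ with $l = 0, \ldots, n$. The Koszul sign rule then accounts for the two sign factors in the lemma: $\epsilon(\tau)$ comes from reordering the $\iota_{s_{i_j}}$ into the shuffled order, while $(-1)^{|Y'|(|s_{i_{\tau(1)}}| + \cdots + |s_{i_{\tau(l)}}|)}$ comes from commuting the $l$ interior products assigned to $\widetilde{Y}$ past $\widetilde{Y'}$.

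To finish, I would restrict to $\varepsilon(\cM)$: the outer bracket on $\vX(A[1] \times \cM)$ descends to the bracket on $\vX(\cM)$ (since the vector fields in play have no $A[1]$-direction component), while each factor $\bigl[[\cdots[\widetilde{Y}, \iota_{s_{i_{\tau(1)}}}], \ldots], \iota_{s_{i_{\tau(l)}}}\bigr]|_{\varepsilon(\cM)}$ is, by definition, $\phi_Y(s_{i_{\tau(1)}} \cdots s_{i_{\tau(l)}})$, and analogously for $\widetilde{Y'}$. The choice of frame $\{a_i\}$ ensures that the explicit lifts \eqref{eq:R} are compatible with the interior products $\iota_{s_i}$ built from the same frame, which is what makes the inductive computation of the bracket $[\widetilde{Y}, \iota_{s_i}]$ match the combinatorial definition of $\phi_Y$. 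The main obstacle is the careful Koszul sign bookkeeping through the repeated applications of the Jacobi identity, together with the verification that the accumulated signs agree precisely with $\epsilon(\tau)(-1)^{|Y'|(|s_{i_{\tau(1)}}| + \cdots + |s_{i_{\tau(l)}}|)}$; the argument runs parallel to that of \cite[Lemma 3.2]{RajMarco}.
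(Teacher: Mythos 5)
Your overall strategy coincides with the paper's: apply the graded Jacobi identity iteratively to distribute the $\iota_{s_{i_j}}$ over the two factors of $[\widetilde{Y},\widetilde{Y'}]$, obtaining the shuffle sum with the signs $\epsilon(\tau)(-1)^{|Y'|(|s_{i_{\tau(1)}}|+\cdots+|s_{i_{\tau(l)}}|)}$, and then restrict to $\varepsilon(\cM)$. The treatment of the left-hand side and of the Jacobi-identity expansion is fine.

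However, there is a genuine gap in your justification of the final step. You claim that after restricting to $\varepsilon(\cM)$ ``the outer bracket on $\vX(A[1]\times\cM)$ descends to the bracket on $\vX(\cM)$ since the vector fields in play have no $A[1]$-direction component.'' This is not the right reason, and as stated it would prove the identity for \emph{arbitrary} extensions of $Y,Y'$ with vanishing $A[1]$-component and for arbitrary elements of $\Gamma(S^n(A[1]))$ --- which is false (this is precisely the content of the remark following the lemma in the paper). The point is that the restriction map $|_{\varepsilon(\cM)}\colon C(A[1])\otimes_{\RR}\vX(\cM)\to \vX(\cM)$ does \emph{not} preserve Lie brackets in general: for $\sum_j g_j\otimes_{\RR}v_j$ and $\sum_k g'_k\otimes_{\RR}v'_k$, the bracket contains cross-terms in which $v_j$ differentiates the $C^{\infty}(M)$-coefficients of $g'_k$ (and vice versa), and these need not vanish upon restriction. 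What saves the argument is the \emph{specific} choice of lift in eq. \eqref{eq:R}: there the coefficients $f_j$ are linear combinations with \emph{constant} coefficients of products of the frame coordinates $\xi_i$, and since $\iota_{s_{i_{\tau(1)}}}\cdots\iota_{s_{i_{\tau(l)}}}f_j$ is again of this form, the restrictions $g_j|_M$ are genuine constants. Hence no cross-terms appear, and only then does restriction to $\varepsilon(\cM)$ commute with the outer bracket. Your closing sentence about the frame gestures at a compatibility between the lifts and the $\iota_{s_i}$, but attaches it to the wrong step (the identification of the inner factors with $\phi_Y$, $\phi_{Y'}$, which is unproblematic) rather than to the bracket-versus-restriction issue, which is where the frame-dependence of the lemma actually enters.
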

\begin{remark}
The equation \eqref{frame} generally does not hold if we replace $\widetilde{Y}, \widetilde{Y'}$
by arbitrary vector fields extending $Y,Y'$ or if we replace $s_{i_1} \cdots s_{i_n}$ by an arbitrary element of $\Gamma(S^n(A[1]))$.  
\end{remark}

\begin{proof} 
By applying the Jacobi identity for the 
  graded Lie bracket on
 $\vX(A[1]\times \cM)$, we obtain the following equation:
\begin{multline}\label{eq:jacsigns}
\Bigl[\bigl[\dots \bigl[[\widetilde{Y},\widetilde{Y'}], \iota_{s_{{i_{1}}}}\bigr],\dots\bigl],\iota_{s_{i_{n}}}\Bigr]=\\
\sum_{l=0}^n \sum_{\tau \in Sh(l,n-l)} \epsilon(\tau)(-1)^{|Y'|(|s_{i_{\tau(1)}}| + \cdots + |s_{i_{\tau(l)}}|)} 
\Bigl[\bigl[ [\dots[\widetilde{Y}, \iota_{s_{i_{\tau(1)}}}],\dots],\iota_{s_{i_{\tau(l)}}}\bigr],\\
\bigl[ [\dots[\widetilde{Y'}, \iota_{s_{i_{\tau(l+1)}}}],\dots],\iota_{s_{i_{\tau(n)}}}\bigr]
\Bigr].
\end{multline} 
By using Remark \ref{rem:vfonN} 2), we see that  applying $|_{{\varepsilon{(\cM)}}}$ to the l.h.s. of the equation \eqref{eq:jacsigns} we obtain the l.h.s. of the equation \eqref{frame}. 
The r.h.s. is more subtle\footnote{This is due to the fact  that the map
$|_{\varepsilon(\cM)}\colon C(A[1])\otimes_{\RR}\vX(\cM)\to 
C^{\infty}(M)\otimesM\vX(\cM)=\Gamma(\{0\}\oplus T\cM)|_{\varepsilon(\cM)}) \cong\vX(\cM)$ introduced in Remark \ref{rem:vfonN} 2) does not preserve Lie brackets.}.
  Writing $\widetilde{Y}$ as in \eqref{eq:R}, we have
$$\bigl[[\dots [\widetilde{Y}, \iota_{s_{i_{\tau(1)}}}],\dots],\iota_{s_{i_{\tau(l)}}}\bigr]=\sum_j g_j\otimes_{\RR} v_j,$$
where the $g_j$ are  linear combinations \emph{with constant coefficients} of products of the $\xi$'s (indeed, $g_j=\pm\iota_{s_{i_{\tau(1)}}}\cdots\iota_{s_{i_{\tau(l)}}}f_j$). Therefore, restricting to ${{\varepsilon{(\cM)}}}$ we obtain the element of $\vX({{\varepsilon{(\cM)}}})$ given by $(\sum_j g_j\otimes_{\RR} v_j)|_{\cM}=\sum_j (g_j|_M)\cdot v_j$, where the $(g_j|_M)$ are \emph{constants}. Because of this, {and using an analogous notation for $\widetilde{Y'}$,} no cross-terms appear in the bracket
 $$\Bigl[\sum_j (g_j|_M)\cdot v_j,\sum_k (g'_k|_M)\cdot v'_k\Bigr]=\sum_{j,k} (g_j|_M)(g'_k|_M)\cdot [v_j,v'_k],$$ which therefore equals $\bigl[\sum_j g_j\otimes_{\RR} v_j,\sum_k g'_k \otimes_{\RR} v'_k\bigr]|_{{{\varepsilon{(\cM)}}}}$. This shows that
 applying $|_{{{\varepsilon{(\cM)}}}}$ to the r.h.s. of equation \eqref{eq:jacsigns} is the same thing as applying $|_{{{\varepsilon{(\cM)}}}}$ to both entries of the brackets in the r.h.s. of equation \eqref{eq:jacsigns}, giving the r.h.s. of equation \eqref{frame}.
\end{proof}

Since $\vX(\cM)$  is a graded Lie algebra,   $\vX(\cM)[1]$ is a 
$L_{\infty}[1]$-algebra whose only non-trivial bracket is 
\begin{equation}\label{eq:curly}
\{P,R\} = (-1)^{|P|} [P,R],
\end{equation}
 where $|P|$ denotes the degree of $P$ in $\vX(\cM)$. (The sign in this formula arises from the d\'{e}calage isomorphism \eqref{deca}.)

 The following lemma and its proof are analogous to \cite[Prop. 3.3]{RajMarco}.
\begin{prop}\label{prop:limor}
Let $X$ be as in  Lemma \ref{lem:bijphi}.  Fix a frame $\{{a}_i\}$ for $A$ (on an open subset $U\subset M$), and consider the lift
 $\widetilde{X} \in \vX(A[1]\times \cM)$ given in eq. \eqref{eq:R}.
   Then, {over $U$,} $\phi_X$ is a curved $\linfty[1]$-algebra morphism $\Gamma(A[1]) \rightsquigarrow \vX(\cM)[1]$ if and only if 
\begin{equation*}
     [Q_{A[1]}, \widetilde{X}]|_{\cN} = -\frac{1}{2}[\widetilde{X},\widetilde{X}]|_{\cN}.
\end{equation*}    
\end{prop}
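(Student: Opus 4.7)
The plan is to translate the single identity $[Q_{A[1]}, \widetilde{X}]|_{\cN} = -\frac{1}{2}[\widetilde{X}, \widetilde{X}]|_{\cN}$ in $\Gamma(S(A[1]^*)) \otimesM \vX(\cM)$ into the infinite family of structure equations defining a curved $L_{\infty}[1]$-morphism, by applying $\phi_{(\,\cdot\,)}$ from Lemma \ref{lem:bijeasy} to both sides and evaluating on symmetric monomials $s_{i_1} \cdots s_{i_n}$. Recall that the target $\vX(\cM)[1]$ carries only the binary bracket $\{\,,\,\}$ of \eqref{eq:curly}, while the $L_{\infty}[1]$-structure on the source $\Gamma(A[1])$ is encoded by $Q_{A[1]}$ in the sense that its multibrackets $l_k$ are recovered as $l_k(s_{i_1}, \dots, s_{i_k}) = \bigl[[\cdots[Q_{A[1]}, \iota_{s_{i_1}}], \dots], \iota_{s_{i_k}}\bigr]|_M$.

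For the right-hand side, Lemma \ref{lemma:binsuv} applied with $Y = Y' = \widetilde{X}$ expresses $\phi_{[\widetilde{X}, \widetilde{X}]|_{\cN}}(s_{i_1} \cdots s_{i_n})$ directly as the appropriate shuffle sum of binary brackets $\bigl[\phi_X(s_I), \phi_X(s_J)\bigr]$ over partitions $I \sqcup J = \{1, \dots, n\}$, which, once the d\'ecalage sign in \eqref{eq:curly} is absorbed, is precisely the ``$\phi_X$-side'' of the arity-$n$ curved $L_{\infty}[1]$-morphism relation. For the left-hand side, a verbatim repetition of the proof of Lemma \ref{lemma:binsuv} applies with $Y = Q_{A[1]}$ (viewed on $A[1] \times \cM$ as having no $\cM$-component, so the ``no cross-terms'' step of that proof goes through unchanged) and $Y' = \widetilde{X}$; this expands $\phi_{[Q_{A[1]}, \widetilde{X}]|_{\cN}}(s_{i_1} \cdots s_{i_n})$ as a shuffle sum in which one factor is an iterated bracket of $Q_{A[1]}$ with some of the $\iota_{s_i}$, which by the formula above equals an $l_k$ of the source $L_{\infty}[1]$-algebra, and the other factor is $\phi_X$ applied to the remaining inputs. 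This is precisely the ``source-side'' of the morphism axiom, including the $n=0$ curved term involving $(\phi_X)_0$.

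Combining the two calculations and projecting onto each arity $n$, the identity $[Q_{A[1]}, \widetilde{X}]|_{\cN} + \frac{1}{2}[\widetilde{X}, \widetilde{X}]|_{\cN} = 0$ is equivalent to the full tower of curved $L_{\infty}[1]$-morphism equations for $\phi_X$, by the injectivity of the assignment $X \mapsto \phi_X$ in Lemma \ref{lem:bijeasy}. The principal obstacle will be bookkeeping signs: the d\'ecalage sign $(-1)^{|P|}$ in \eqref{eq:curly}, the Koszul shuffle signs of Lemma \ref{lemma:binsuv}, and the signs arising when graded-commuting $Q_{A[1]}$ past the contractions $\iota_{s_i}$ must combine to reproduce exactly the standard signs of the curved $L_{\infty}[1]$-morphism axiom; this is entirely parallel to \cite[Prop.\ 3.3]{RajMarco} and introduces no new conceptual difficulty. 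A secondary technical check is that Lemma \ref{lemma:binsuv} applies even though $Q_{A[1]}$ is not of the form $\widetilde{Y}$ for some $Y \in \Gamma(S(A[1]^*)) \otimesM \vX(\cM)$, but this is immediate since $Q_{A[1]}$ already has coefficients that are polynomials in the fiber coordinates $\xi_i$ on $A[1]$, so its pullback to $A[1] \times \cM$ plays the role of its own lift and the ``constant coefficients'' step in the proof of Lemma \ref{lemma:binsuv} goes through.
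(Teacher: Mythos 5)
Your proposal is correct and follows essentially the same route as the paper: expand both sides by iterated brackets with the $\iota_{s_i}$, apply Lemma \ref{lemma:binsuv} with $Y=Y'=\widetilde{X}$ for the right-hand side, recognize the iterated brackets of $Q_{A[1]}$ with the $\iota_{s_i}$ as the source multibrackets via the derived bracket construction \eqref{eq:adp}, and conclude by the injectivity of $X\mapsto\phi_X$ from Lemma \ref{lem:bijeasy}. The only imprecision is your closing remark: the coefficients of $Q_{A[1]}$ involve the structure functions and are \emph{not} constant, so the ``constant coefficients'' step of Lemma \ref{lemma:binsuv} does not literally transfer; the paper instead handles the left-hand side by a direct Jacobi computation resting on the fact that $Q_{A[1]}$ has no $\cM$-component (via the identity $[\widehat{Q},\widehat{X}]\vert_{\varepsilon(\cM)}=[\widehat{Q}\vert_{\varepsilon(\cM)},\widehat{X}]\vert_{\varepsilon(\cM)}$ and the vanishing of the $j=0$ summand on the body), which is the rigorous form of the first justification you give.
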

\begin{proof}
The maps $\phi_X$ are $C^{\infty}(M)$-multilinear by Lemma \ref{lem:bijphi}, so $\phi_X$ is a   {curved} $\linfty[1]$-morphism if and only if it satisfies the corresponding equations for
local frames of ${A[1]}$. Concretely (see Definition \eqref{li1mor}) this means: if and only if, for all subsets $({a}_{i_1},\dots,a_{i_n})$ of the local frame ($n\ge 0$), {writing $s_i:=a_i[1]$} we have
\begin{multline}\label{eqn:dglmorphA}
        \sum_{j={1}}^n \sum_{\tau \in Sh(j,n-j)} \epsilon(\tau) \phi_X (\{s_{i_{\tau(1)}}, \dots,  s_{i_{\tau(j)}}\} s_{i_{\tau(j+1)}} \cdots s_{i_{\tau(n)}} ) = \\ 
         \frac{1}{2} \sum_{l=0}^n \sum_{\tau \in Sh(l,n-l)} \epsilon(\tau) \left\{ \phi_X (s_{i_{\tau(1)}} \cdots s_{i_{\tau(l)}}), \phi_X(s_{i_{\tau(l+1)}} \cdots s_{i_{\tau(n)}}) \right\}.
\end{multline}
By
Lemma \ref{lemma:binsuv} the r.h.s. of \eqref{eqn:dglmorphA} equals $-\frac{1}{2}\phi_{[\widetilde{X},\widetilde{X}]|_{\cN}}(s_{i_1} \cdots s_{i_n})$, {as one sees using \eqref{eq:curly} and since $X$ has degree $1$.}

For the l.h.s. we observe that:
\begin{multline}
        \phi_{[Q_{A[1]}, \widetilde{X}]|_{\cN}} (s_{i_1} \cdots s_{i_n}) =\Bigl[\bigl[\dots[[Q_{A[1]},\widetilde{X}], \iota_{s_{i_1}}] \dots\bigl],\iota_{s_{i_n}}\Bigl]|_{{\varepsilon{(\cM)}}} \\
=  \sum_{j=0}^n \sum_{\tau \in Sh(j,n-j)} \epsilon(\tau)(-1)^{|s_{i_{\tau(1)}}| + \cdots + |s_{i_{\tau({j})}}|}  \Bigl[\bigl[[\dots [Q_{A[1]}, \iota_{s_{i_{\tau(1)}}}]\dots],\iota_{s_{i_{\tau(j)}}}\bigl]\vert_{{\varepsilon{(\cM)}}}\;,\quad\quad\\ \bigl[[\dots [\widetilde{X}, \iota_{s_{i_{\tau(j+1)}}}]\dots],\iota_{s_{i_{\tau(n)}}}\bigr] \Bigr]\vert_{{\varepsilon{(\cM)}}}
\end{multline} 
where {in the first equality we used Remark \ref{rem:vfonN}-2)}, and in the second equality we
repeatedly applied the Jacobi identity (exactly as for eq. \eqref{eq:jacsigns}), used that $X$ has degree $1$, and the identity $[\widehat{Q},\widehat{X}]\vert_{{\varepsilon{(\cM)}}}=[\widehat{Q}\vert_{{\varepsilon{(\cM)}}},\widehat{X}]\vert_{{\varepsilon{(\cM)}}}$ for all vector fields $\widehat{Q}\in  \vX(A[1])$ and
 $\widehat{X}\in C(A[1]) \otimes_{\RR} \vX(\cM)$. {Notice that the summand corresponding to $j=0$ vanishes, as $Q_{A[1]}$ vanishes on the body of $A[1]$.}
 Since  the term 
 \begin{equation}\label{eq:vertr}
[[\dots [Q_{A[1]}, \iota_{s_{i_{\tau(1)}}}]\dots],\iota_{s_{i_{\tau(j)}}}]\vert_{{\varepsilon{(\cM)}}}
\end{equation}
 commutes with the $\iota_{s_{i}}$'s,  applying the Jacobi identity allows us to put this term  
 just in front of the $\widetilde{X}$, with no extra signs arising. Switching the   term {\eqref{eq:vertr} with $\widetilde{X}$} and using the derived bracket construction 
\eqref{eq:adp}
to describe the $L_{\infty}[1]$-brackets on $\Gamma(A[1])$  
  we obtain the  l.h.s. of \eqref{eqn:dglmorphA}.  
 
 We conclude that the equation \eqref{eqn:dglmorphA} holds for all subsets of local frames $\{{a}_i\}$   if and only if $\phi_{[Q_{A[1]}, \widetilde{X}]|_{\cN}} =- \frac{1}{2}\phi_{[\widetilde{X},\widetilde{X}]|_{\cN}}$. Applying Lemma \ref{lem:bijeasy} finishes the proof.
\end{proof}

\begin{prop}\label{thm:equiv} Let $X\in \Gamma(S(A[1]^*))\otimesM \vX(\cM)$ {as in  Lemma \ref{lem:bijphi}}.
The following statements are equivalent:
\begin{enumerate}
     \item $Q_{tot}  := Q_{A[1]}|_{\cN}+X$ is a homological vector field on $\cN$.
     \item $\phi_X$ is a curved $\linfty[1]$-algebra morphism $\Gamma(A[1]) \rightsquigarrow \vX(\cM)[1]$.     \end{enumerate}  
\end{prop}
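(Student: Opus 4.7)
The plan is to reduce this equivalence to Proposition \ref{prop:limor}, with only a short calculation needed in between. The key observation is that $Q_{tot}$ is already tangent to $\cN$ by Lemma \ref{lem:bijphi}, so $[Q_{tot},Q_{tot}]$ computed as a self-bracket in $\vX(\cN)$ coincides with the restriction of the bracket in $\vX(A[1]\times\cM)$ of any tangent extension. I will exploit this compatibility together with the bilinearity of the bracket.

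First, I would fix a local frame $\{a_i\}$ of $A$ and form the lift $\widetilde{X}\in\vX(A[1]\times\cM)$ as in \eqref{eq:R}, so that $(Q_{A[1]}+\widetilde{X})|_{\cN}=Q_{A[1]}|_{\cN}+X=Q_{tot}$. Since $Q_{tot}$ is tangent to $\cN$ (Lemma \ref{lem:bijphi}), and since the bracket of two vector fields whose restrictions to a submanifold are tangent to it restricts to the bracket of their restrictions on the submanifold, I obtain
\begin{equation*}
[Q_{tot},Q_{tot}]_{\vX(\cN)} \;=\; \bigl[Q_{A[1]}+\widetilde{X},\,Q_{A[1]}+\widetilde{X}\bigr]_{\vX(A[1]\times\cM)}\bigl|_{\cN}.
\end{equation*}
Next, I would expand the right-hand side using graded bilinearity. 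Since $Q_{A[1]}$ is homological on $A[1]$, one has $[Q_{A[1]},Q_{A[1]}]=0$, so the expansion reduces to
\begin{equation*}
[Q_{tot},Q_{tot}] \;=\; 2[Q_{A[1]},\widetilde{X}]\bigl|_{\cN} \;+\; [\widetilde{X},\widetilde{X}]\bigl|_{\cN}.
\end{equation*}
Consequently, $Q_{tot}$ is homological (i.e.\ $[Q_{tot},Q_{tot}]=0$) if and only if $[Q_{A[1]},\widetilde{X}]|_{\cN}=-\tfrac{1}{2}[\widetilde{X},\widetilde{X}]|_{\cN}$.

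Finally, I would invoke Proposition \ref{prop:limor}, which asserts that this last equation is, locally over $U$, equivalent to $\phi_X$ being a curved $L_\infty[1]$-algebra morphism $\Gamma(A[1]) \rightsquigarrow \vX(\cM)[1]$. Since being an $L_\infty[1]$-morphism is a $C^\infty(M)$-multilinear condition (Lemma \ref{lem:bijphi}), it can be checked on local frames, so the equivalence globalizes. I would remark that although $\widetilde{X}$ depends on the chosen frame, the condition $[Q_{tot},Q_{tot}]=0$ does not, which confirms that the frame-dependent equation in Proposition \ref{prop:limor} indeed encodes a frame-independent statement. The only genuine subtlety is verifying that restriction commutes with brackets for tangent vector fields; this is standard, but I would state it carefully since everything else follows directly.
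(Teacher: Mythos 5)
Your proposal is correct and follows essentially the same route as the paper's proof: fix a local frame, lift $X$ to $\widetilde{X}$, use that the bracket on $\cN$ can be computed via (tangent) extensions together with $[Q_{A[1]},Q_{A[1]}]=0$ to reduce the homological condition to $[Q_{A[1]},\widetilde{X}]|_{\cN}=-\tfrac{1}{2}[\widetilde{X},\widetilde{X}]|_{\cN}$, and then invoke Prop.~\ref{prop:limor}. Your extra remarks on globalizing via $C^\infty(M)$-multilinearity and on the frame-independence of $[Q_{tot},Q_{tot}]=0$ are consistent with what the paper leaves implicit.
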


\begin{proof}
Fix a frame $\{s_i\}$ for $A$ (on an open subset $U\subset M$), and consider the lift
 $\widetilde{X} \in \vX(A[1]\times \cM)$ given in eq. \eqref{eq:R}. 
 Then $Q_{tot}$ is a homological vector field on $\cN$ if{f}
 $$0=\frac{1}{2}[Q_{tot},Q_{tot}]= \frac{1}{2}\left[Q_{A[1]}+\widetilde{X},Q_{A[1]}+\widetilde{X}\right]|_{\cN}=
 \frac{1}{2}[\widetilde{X},\widetilde{X}]|_{\cN}+
 [Q_{A[1]}|_{\cN},\widetilde{X}]|_{\cN},$$ 
 where in the second equality we used that the Lie bracket of vector fields on $\cN$ can be computed taking arbitrary extensions, and in the third equality we used
 $Q_{A[1]}^2 = 0$. This happens if{f} $\phi_X$ is a curved $\linfty[1]$-algebra morphism, by
 Prop. \ref{prop:limor}.
 \end{proof}

\subsection{Statement of the main theorem}
{We can finally state our main theorem:}
\bigskip

\begin{fthm}\label{charvf}
Let $A\to M$ be a Lie $n$-algebroid,
and $\cM$ be an $N$-manifold with body $M$. Let 
$\cN:=A[1]\times_M \cM$.
 There is a one-to-one correspondence between 
\begin{enumerate}
\item $\linfty$-actions of $A$ on $\cM$
\item  homological vector fields $Q_{tot}$ on $\cN$ for which the projection map $\cN \to A[1]$ is a $Q$-manifold morphism. 
\end{enumerate} 
\smallskip
\end{fthm} 
\smallskip

\begin{proof}
{We describe both assignments, which are clearly inverses of each other.}
\begin{itemize}
\item  $(1)\rightarrow (2):$ An $\linfty$-action is a map $\Gamma(\wedge A)\to \vX(\cM)$
as in Def. \ref{def:liac}, which we can interpret as
$C^{\infty}(M)$-multilinear map $\Phi\colon \Gamma(S(A[1])\to \vX(\cM)[1]$ of degree $0$ such that $\Phi_1(a[1])|_M=\rho(a)$ for all $a\in \Gamma(A_0)$, and $\Phi_0
\in \vX^{vert}_1(\cM)$. {By Lemma \ref{lem:bijphi} and Prop.  \ref{thm:equiv},  $Q_{tot} :=Q_{A[1]}|_{\cN}+X$ is
a  homological vector field on $\cN$}, where  $X\in \Gamma(S(A[1]^*)\otimesM\vX(\cM)$ of  degree $1$
satisfies $\Phi=\phi_X$ (the latter is defined in Lemma \ref{lem:bijeasy}).

\item $(2)\rightarrow (1):$ Given the homological vector field $Q_{tot}$, define $X:=Q_{tot}-Q_{A[1]}|_{\cN}$, {which satisfies the hypotheses of Lemma \ref{lem:bijphi}.}
Then $\phi_X\colon \Gamma(S(A[1]))\to \vX(\cM)[1]$ is a curved
$L_{\infty}[1]$-algebra morphism by Prop.  \ref{thm:equiv}, and the  corresponding $L_{\infty}$-algebra morphism
$\Gamma(\wedge A)\to \vX(\cM)$ is an $\linfty$-action of $A$ on $\cM$ {by Lemma \ref{lem:bijphi}}.
\end{itemize}
\end{proof}

\begin{remark}\label{rem:tangcm}
{{In the setting of Thm. \ref{charvf},}
the N-manifold $\cM$  inherits a homological vector field, which is vertical in the sense that it annihilates (pull-backs of) functions on $M$. Indeed, since the projection map $\cN \to A[1]$ is a $Q$-manifold morphism and the homological vector field on $A[1]$ vanishes on the zero section $M$,  $Q_{tot}$ restricts to ${{\varepsilon{(\cM)}}}\cong\cM$. Alternatively, the homological vector field of $\cM$ can be described as the zero-th component of the $\linfty$-action on $\cM$.}
\end{remark}
{In view of Theorem \ref{charvf}, when $Q_{\cM}$ is prescribed we obtain:}
\begin{thm}\label{charvfQM}
Let $A\to M$ be a Lie $n$-algebroid,   and let $(\cM,Q_{\cM})$ be a $Q$-manifold such that
$Q_\cM\in \vX^{vert}_1(\cM)$. Denote 
$\cN:=A[1]\times_M \cM$.

 There is a one-to-one correspondence between 
     \begin{enumerate}
\item $\linfty$-actions of $A$ on $\cM$ compatible\footnote{See   Definition  \ref{def:action}.}
 with $Q_\cM$
\item  homological vector fields $Q_{tot}$ on $\cN$ for which
\begin{equation*}
(\cM,Q_\cM)\overset{\varepsilon}{\to} (\cN, Q_{tot}) \to (A[1],Q_{A[1]})
\end{equation*}
 is a sequence of $Q$-manifold morphisms.
\end{enumerate}  \end{thm}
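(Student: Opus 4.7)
The plan is to derive Theorem \ref{charvfQM} as a refinement of Theorem \ref{charvf}, by matching up the two extra conditions that distinguish the present statement from the unconstrained version. Recall that Theorem \ref{charvf} already provides a bijection between $L_\infty$-actions $F$ of $A$ on $\cM$ and homological vector fields $Q_{tot}$ on $\cN$ for which $\cN\to A[1]$ is a $Q$-morphism. It remains to check that, under this bijection, the compatibility condition $F_0=Q_\cM$ on side (1) corresponds exactly to the condition that $\varepsilon\colon(\cM,Q_\cM)\to(\cN,Q_{tot})$ is a $Q$-morphism on side (2).

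First I would observe that the condition ``$\varepsilon$ is a $Q$-morphism'' is equivalent to the vector field $Q_{tot}$ on $\cN$ restricting to $Q_\cM$ along the embedded submanifold $\varepsilon(\cM)\cong\cM$. Using the splitting of $T\cN|_{\varepsilon(\cM)}$ into horizontal and vertical components coming from the vector bundle structure $\cN\to\cM$ described in Remark \ref{rem:aftercN}, together with the fact that $Q_{A[1]}$ vanishes on the zero section $M\subset A[1]$ (so $Q_{A[1]}|_\cN$ restricts to zero on $\varepsilon(\cM)$), this condition reduces to $X|_{\varepsilon(\cM)}=Q_\cM$, where $X=Q_{tot}-Q_{A[1]}|_\cN$ is the element of $\Gamma(S(A[1]^*))\otimesM\vX(\cM)$ associated to $Q_{tot}$ in Lemma \ref{lem:bijphi}.

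Next I would compute $F_0$ directly from the formula \eqref{eq:phi} of Lemma \ref{lem:bijeasy}: with no section inserted, the defining equation gives $F_0=\phi_X|_{k=0}=X|_{\cM}$, where we use the identification of the degree-zero component of $X$ with its restriction to $\varepsilon(\cM)$. Hence $F_0=Q_\cM$ if and only if $X|_{\varepsilon(\cM)}=Q_\cM$, which by the previous paragraph is equivalent to $\varepsilon$ being a $Q$-morphism. Combining this with Theorem \ref{charvf} yields the desired bijection.

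The only subtle point in this argument, and the one I would take some care with, is identifying $F_0$ with the restriction $X|_{\varepsilon(\cM)}$: the map $\phi_X$ is defined by iterated brackets that restrict to $\varepsilon(\cM)$, and the $k=0$ case needs to be unpacked cleanly so that the degree count matches ($X_0\in\vX_1(\cM)$ from the decomposition in the proof of Lemma \ref{lem:bijphi} becomes $F_0\in\vX_1^{vert}(\cM)$). Once this identification is in place, the rest is a direct specialization of Theorem \ref{charvf}, and the observation already recorded in Remark \ref{rem:tangcm} confirms that the bijection behaves exactly as claimed.
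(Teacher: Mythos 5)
Your proposal is correct and follows essentially the same route as the paper, which states Thm.~\ref{charvfQM} as an immediate consequence of Thm.~\ref{charvf} (``In view of Theorem \ref{charvf}, when $Q_{\cM}$ is prescribed we obtain...'') with the key identification $Q_{tot}|_{\varepsilon(\cM)}=X_0=F_0$ already recorded in Remark \ref{rem:tangcm}. You merely spell out the details the paper leaves implicit, and you do so accurately.
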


\section{Comparison with other homotopy structures} \label{sec:comp}

 {
In the Introduction 
we saw that, after
applying the N-manifold version of Batchelor's theorem, the set-up addressed in this paper is the following:
$A\to M$ a Lie algebroid\footnote{More generally, in this paper we allow Lie $n$-algebroids.}, $V\to M$ a graded vector bundle  concentrated in degrees $\le 0$, 
 and $Q_{tot}$ a homological vector field  on $\cN:=A[1]\times_M V[1]$ such that the projection 
$$(\cN,{Q_{tot}})\to (A[1],Q_{A[1]})$$ is a  morphism of $Q$-manifolds. The situation is summarized by this diagram:
\begin{equation*} 
\SelectTips{cm}{}
\xymatrix{
\cN\ar[d]  \ar[r] & V[1]\ar[d]^p \\
A[1] \ar[r]^{\pi}& M}
\end{equation*}
We describe how the homological vector field ${Q_{tot}}$ can be ``decomposed'' in various equivalent ways according to various notions of degree,
and  compare these ``decompositions''.}

\begin{itemize}
\item[a)] 
By\footnote{Item a) holds also replacing $V[1]$ by any N-manifold.}
  Thm. \ref{charvf}, ${Q_{tot}}$ is encoded by the given Lie algebroid structure on $A$ together with   an $L_{\infty}$-action, \emph{i.e.} 
a curved $L_{\infty}$-morphism
\begin{equation*} 
F\colon \Gamma(A)  \rightsquigarrow \vX(V[1])
\end{equation*}
with components $F_k$ ($k\ge 0)$
satisfying compatibility conditions (see Def. \ref{def:liac}).

\item[b)] Following  \cite{CCShengHigherExt}, since $\cN=(A\oplus V)[1]$ is a graded vector bundle over $M$,  $Q$ is encoded by a  
 a split Lie $n$-algebroid structure on it, see  \S \ref{subsec:split}. The Lie $n$-algebroid structure consists of an anchor map $\rho\colon A\oplus V_0\to TM$ and an $L_{\infty}$-algebra structure on the sections $$\Gamma(A\oplus V),$$ whose  multibrackets  we denote by $l_k$ for $k\ge 1$. Notice that here we did not use the fact that ${Q_{tot}}$ projects to $Q_{A[1]}$.
 
\item[c)] Following\footnote{Given a Lie algebroid $A\to B$ and a vector bundle $E\to B$,    
a Kapranov DG-manifold structure is given by a homological vector field $Q$ on $A[1]\oplus E$ such that both the projection  $A[1]\oplus E\to A[1]$ and the inclusion $A[1]\hookrightarrow A[1]\oplus E$ are Q-manifold morphisms \cite[\S 4]{KapranovDGman}.  In \cite[Thm. 4.10]{KapranovDGman} various equivalent characterizations are given, and here we are extending in a straightforward way  characterization (6) there.}
 \cite[Thm. 4.10]{KapranovDGman}, we use that
$\cN=\pi^*(V[1])$ is   a vector bundle over $A[1]$, with the property that the vector bundle projection preserves homological vector fields (here $\pi\colon A[1]\to M$). 
 Applying Voronov's derived bracket construction, in complete analogy to \S \ref{subsec:split}, we see that ${Q_{tot}}$ is encoded by the Lie algebroid structure on $A$ and a curved $L_{\infty}$-algebra structure on the sections $$C(A[1])\otimesM \Gamma(V)=\Gamma(\wedge A^*\otimes V),$$
whose  multibrackets  we denote by $m_k$ for $k\ge 0$, 
with the property that 
\begin{itemize}
\item $m_k$ is $\Gamma(\wedge A^*)$-multilinear for $m\ge 2$ and  
\item $m_1\colon \Gamma(\wedge A^*\otimes V)\to \Gamma(\wedge A^*\otimes V)$ satisfies $
m_1(\alpha\otimes v)=(Q_{A[1]}\alpha)\otimes v+(-1)^{|\alpha|}\alpha\otimes (m_1v)$.
\end{itemize}
(In general $m_1$ is does not square to zero, \emph{i.e.} it is not an representation up to homotopy. When the inclusion of $A[1]$ in $\cN$ as the zero section  is a $Q$-manifold morphism, then $m_0$ vanishes and $m_1$ squares to zero.) 
\end{itemize}

Now we  compare the 3  ``decompositions'' of ${Q_{tot}}$ described above.
The (polynomial) functions on $\cN$ are given by $\Gamma(\wedge  A^*)\otimes_{C^{\infty}(M)} \Gamma(\wedge V^*)$, hence -- in addition to the usual degree as functions on an N-manifold -- they have a bidegree: we will say that an element of
$\Gamma(\wedge^p A^*)\otimes_{C^{\infty}(M)} \Gamma(\wedge^q {V^*})$
\emph{polynomial bidegree} $(p,q)$.

\begin{remark}  Since the graded vector bundle projection $\cN=\pi^*(V[1])\to A[1]$ maps ${Q_{tot}}$ to $Q_{A[1]}$, the vector field ${Q_{tot}}$ can be written as a $C(\cN)$-linear combination of sections of this graded vector bundle   plus a lift of $Q_{A[1]}$.
The description in coordinates is as follows: choose coordinates $x$ on $M$, linear coordinates $\xi$ (of degree 1) on the fibers of $A[1]$, and linear coordinates $\eta^i$ on the fibers of $V_i[1]$ for $i=0,1,2,\dots$, where we omit the indexing of the coordinates for the sake of readability.
Then ${Q_{tot}}=Q_{A[1]}+X$ where in coordinates, omitting coefficients of degree zero (\emph{i.e.} in $C^{\infty}(M)$) for the sake of readability, $Q_{A[1]}$ has the form $\xi {\partial_x}+\xi\xi {\partial_{\xi}}$ and $X$ has the form
 $$\left(\xi\xi+\xi\eta^1 +\eta^1\eta^1+\eta^2\right)\frac{\partial}{\partial{\eta^1}}+\left(\xi\xi\xi+\cdots+\eta^3\right)\frac{\partial}{\partial{\eta^2}}+\cdots.$$ The polynomial bidegree of a monomial coefficient function is $(p,q)$ if the number of ``$\xi$'' appearing is $p$ and the number of ``$\eta$'' appearing is $q$.
\end{remark}

{
For each other 3 ``decompositions'' of $Q$ above, a computation in coordinates shows that we have:
\begin{itemize}
\item[a)] for the $L_{\infty}$-action: for all  $k\ge 0$, $F_k$ is determined by the summands in $X$ whose coefficients have bidegree $(k,*)$ (with $*$ arbitrary),
\item[b)] for the $L_{\infty}$-algebra structure on $\Gamma(A\oplus V)$: for all $k\ge 1$, $l_k$ is determined by the summands in $Q$ whose coefficients have bidegree $(p,q)$ with $p+q=k$, except for $l_2$ which is determined by $Q_{A[1]}$ too.
\item[c)] for the curved $L_{\infty}$-algebra structure on $\Gamma(\wedge A^*\otimes V)$: for all $k\ge 0$, $m_k$ is determined by the summands in $X$ whose coefficients have bidegree $(*,k)$ (with $*$ arbitrary), except for $m_1$ which is determined by $Q_{A[1]}$ too.
\end{itemize}
}

 \begin{remark}
{In item a) we ``decomposed'' $Q_{tot}$ according to the corresponding $L_{\infty}$-action. We now comment on how the latter can be seen in terms of  multi-anchors.}
{Recall that $\cN=p^*A[1]\to V[1]$ is   a graded vector bundle, and  by Rem. \ref{rem:tangcm} 
 the homological vector field $Q_{tot}$   is tangent to the zero section $V[1]$.
Hence $Q_{tot}$  makes this graded vector bundle a 
\emph{$L_{\infty}[1]$ algebroid with graded base} in the sense of  Vitagliano, see \cite[\S 3]{VitReprSHLR} and references therein. This is the geometric notion corresponding to the
\emph{homotopy Lie-Rinehart algebras}  defined in \cite[\S 3]{VitReprSHLR} and in  \cite[Def. 1.6]{VitaglianoFol}.
The latter in particular carry ``multi-anchors'', which in this case take tuples of sections of $p^*A[1]$ to vector fields on $V[1]$. The $L_{\infty}$ action we obtained in Thm. \ref{charvf} is {expected to be} the restriction of the multi-anchors to sections which are pull-backs of  sections of $A[1]$.}
\end{remark}

 \vspace{5mm}
 \part{Examples and applications }

The second part of the paper is devoted to examples of Thm. \ref{charvf}, and in all of them we consider $L_{\infty}$-actions of Lie algebroids  {(rather than of arbitrary Lie $n$-algebroids)}.  {In \S \ref{sec:examplesDG} we briefly discuss how vector bundles of $Q$-manifolds give classes of examples. In the remaining sections we    discuss in detail several classes of examples, as outlined    in the Introduction.}


 \section{Vector bundles of $Q$-manifolds}\label{sec:examplesDG}

Let $\cA$ be an N-manifold with body $M$, and denote by $\pi_{\cA,M}\colon \cA\to M$
the natural projection. Let
$\cE\to \cA$ be a ($\ZZ$-)graded vector bundle  over $\cA$, which we assume to be {finite dimensional}.
There is a non-canonical isomorphism of graded vector bundles
\begin{equation}\label{eq:isoRaj}
\cE\cong  (\pi_{\cA,M})^*\cM=\cA\times_M \cM
\end{equation}
where\footnote{More formally: $\cM:=(0_{M,\cA})^*\cE$ for $(0_{M,\cA})$
the embedding of the body $M$ into $\cA$.} $\cM$ is the graded vector bundle over $M$ given by: $$\cM:=\cE|_{M}.$$  The existence of the isomorphism is proven by
Mehta \cite[Thm. 2.1]{m:lamods}, and the fiber product is over $\pi_{\cA,M}$ and the vector bundle projection of $\cM$.
{\begin{equation*} 
\SelectTips{cm}{}
\xymatrix{
\cE\ar[d]  \ar[r] & \cM\ar[d] \\
\cA \ar[r]^{\pi_{\cA,M}}& M}
\end{equation*}
}

We now bring homological vector fields into play. Let $A$ be a Lie $n$-algebroid, and consider a vector bundle  
\begin{equation}\label{eq:vbQ}
(\cE,Q_{\cE})\to (A[1],Q_{A[1]})
\end{equation}
in the category of $Q$-manifolds, {whose fibers are assumed to be concentrated in negative degrees}.
 Notice that, since $Q_{A[1]}$ vanishes on the body $M$, the homological vector field $Q_{\cE}$ restricts to {a vertical vector field on} the N-manifold $\cM$, so we actually have a sequence of $Q$-manifold morphisms
$$
(\cM,(Q_\cE)|_{\cM})\to (\cE,Q_{\cE}) \to (A[1],Q_{A[1]}).
$$
The choice of an isomorphism as in \eqref{eq:isoRaj} allows us to apply Thm. \ref{charvfQM} and obtain an $L_{\infty}$-action of $A$ on $\cM$ {compatible with $(Q_\cE)|_{\cM}$}.

Below, we provide some classes of examples of vector bundles as in \eqref{eq:vbQ}.
   
\begin{ex}
 [Lie algebroid modules]
When $A$ is a Lie algebroid and $Q_{\cE}$ is a fiberwise linear vector field, we obtain a Lie algebroid module in the sense of Vaintrob \cite{vaintrob} (see also \cite[\S4]{m:lamods}). Upon choosing an isomorphism as in \eqref{eq:isoRaj}, one obtains 
representations  up to homotopy \cite[Lemma 4.4]{m:lamods}, which we consider in \S \ref{sec:special}.
\end{ex}

\begin{ex}[{$Q$-bundles}]
When the bundle is locally trivial in the category of $Q$-manifolds, we obtain an instance of the $Q$-bundles  of Kotov-Strobl \cite{Qbundles}.
  \end{ex}

\begin{ex}[Principal bundles]
There are situations in which the bundle \eqref{eq:vbQ} is a 
principal $\RR[n]$-bundle   (in the category of $Q$-manifolds), as 
considered in \cite{Lupercio:2012ha} in order to encode classical geometric structures.
An example appears in \S \ref{sec:r2}.
\end{ex}

\section{Lie algebroids extensions }\label{sec:algebroids}

In this section  we show that, given a surjective Lie algebroid morphism $E\to A$ covering the identity on the base (this is a special case of Lie algebroid extension), a  choice of linear splitting induces an $L_{\infty}$-action of $A$ on the kernel of the morphism (shifted by $1$).
A typical example is when $E$ is a transitive Lie algebroid and $\rho\colon E\to TM$ is its anchor map,  \emph{e.g.} an Atyiah Lie algebroid.

{This extends the following  well-known correspondence  (see  \cite[\S 4.5]{MK2}, also \cite{Cr-preq}).
Fix a Lie algebroid $A$ and a vector bundle $V$ over the same base. Given a representation of $A$ on $V$ (\emph{i.e.} a flat connection $\nabla$) and a 2 cocycle $\omega$ for the representation, the direct sum $A\oplus V$ is endowed with a Lie algebroid structure\footnote{The Lie algebroid bracket is given by $[(a,v),(b,w)]=\left([a,b]_A, \nabla_aw-\nabla_bv+\omega(a,b)\right)$.}
such that 
$V\hookrightarrow A\oplus V\to A$ is an \emph{abelian} extension of Lie algebroids. (Here abelian means that the Lie algebroid structure on $V$ is trivial.) Conversely, given any abelian extension of Lie algebroids, 
 a choice of splitting delivers a flat connection $\nabla$ and a 2 cocycle $\omega$.
}

{
An \emph{arbitrary} (not necessarily abelian) extension is of the form $V\hookrightarrow E \to A$
where $V$ now is a bundle of Lie algebras, \emph{i.e.} a Lie algebroid with vanishing anchor. The choice of a splitting delivers a connection $\nabla$ and a tensor $\omega\in \Gamma(\wedge^2A^*\otimes V)$. The connection has curvature, so it no longer defines a representation, and $\omega$ is not a cocycle in any strict sense, hence \emph{\`a priori} it is not clear how to interpret the compatibility equations they satisfy {\cite{MK2}}. As we explain in this section, our formalism shows that $\nabla$ and $\omega$ assemble into an $L_{\infty}$-morphism.
}

\subsection{From Lie algebroid extensions to $L_{\infty}$-actions}

We start from the following data:
\begin{itemize}
\item[i)] Lie algebroids $(E,[\ ,\,]_E)$ and $(A,[\ ,\,]_A)$  over $M$, together with a surjective Lie algebroid morphism $\rho\colon E\to A$ covering $\id_M$, as follows:
 \begin{equation}\label{eq:surj}\SelectTips{cm}{}
\xymatrix{
E\ar[d] \ar@{->>}[r]^{\rho}   & A \ar[d]  \\
M\ar[r]^{\id_M}& M 
}
\end{equation}
\item[ii)] A linear splitting $\sigma\colon A\to E$ of the short exact sequence of Lie algebroids
 $$\g_M\hookrightarrow E \overset{\rho}{\to} A,$$ 
 where $\g_M:=\ker(\rho)$ (a bundle of Lie algebras). Notice that a choice of linear splitting  is always possible. 
 \end{itemize}
  We obtain an identification
$$E=\sigma(A)\oplus \g_M\cong  A \oplus\g_M,$$
where  the above isomorphism of vector bundles is $({\rho|_{\sigma(A)},\id_{\g_M}})$ and has inverse $({\sigma,\id_{\g_M}})$. One can then transport the structure, so as to obtain an  isomorphism of $Q$-manifolds: 
\begin{equation}\label{eq:isogr}
(E[1], Q_{E[1]})\cong ( A[1]\times_M\g_M[1], Q)=:\cN
\end{equation}
 where $Q_{E[1]}$ encodes the Lie algebroid structure on $E$ and $Q$ corresponds to it under the above isomorphism. The projection $\cN\to A[1]$ maps $Q$ to the homological vector field $Q_{A[1]}$, since   $\rho\colon E\to A$
is a Lie algebroid morphism. Hence we are in the situation of item (2) in
Thm. \ref{charvf}, with  $\cM:=\g_M[1]$.
By that theorem, there is an associated $L_{\infty}$-action of $A$ on $\g_M[1]$. Summarizing:

{
\begin{prop}\label{prop:sumsurjLA}
A surjective morphism of Lie algebroids $\rho\colon E\to A$  covering $\id_M$, together with a choice of linear splitting of $\rho$, induce an $L_{\infty}$-action of $A$ on $\g_M[1]$. Here
$\g_M:=\ker(\rho)$.
\end{prop}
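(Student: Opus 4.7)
The plan is essentially to reorganize the construction sketched just before the statement into a clean three-step argument, invoking Thm. \ref{charvf} at the end.

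First, I would use the linear splitting $\sigma\colon A\to E$ to identify the underlying graded vector bundles, obtaining an isomorphism $E\cong A\oplus\g_M$ covering $\id_M$. Shifting by $1$ and taking the fiber product over $M$, this yields an isomorphism of N-manifolds
\[
E[1]\;\cong\; A[1]\times_M\g_M[1]\;=:\;\cN.
\]
Transporting the canonical homological vector field $Q_{E[1]}$ (which encodes the Lie algebroid structure on $E$) along this isomorphism produces a homological vector field $Q$ on $\cN$.

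Second, I would check that the projection $\cN\to A[1]$ is a $Q$-manifold morphism, i.e.\ that it intertwines $Q$ with $Q_{A[1]}$. Under the isomorphism above this projection corresponds to $\rho[1]\colon E[1]\to A[1]$, so the statement is exactly the fact that $\rho\colon E\to A$ is a Lie algebroid morphism, translated into the language of homological vector fields.

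Finally, with these two pieces in place the setting of item (2) in Thm. \ref{charvf} is realized with $\cM:=\g_M[1]$ (which is an N-manifold with body $M$, using that $\g_M$ is concentrated in degree $0$ before the shift). Applying Thm. \ref{charvf} then yields the desired $L_{\infty}$-action of $A$ on $\g_M[1]$.

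The only nontrivial point is the second step; once one is comfortable with the dictionary between Lie algebroid morphisms and morphisms of the associated degree-$1$ $Q$-manifolds, everything reduces to unpacking definitions, so I do not expect a real obstacle. The rest of the section (computing $F_0$, $F_1$, $F_2$ explicitly in terms of the bracket of $\g_M$, the induced connection $\nabla$, and the curvature-like $2$-form $\omega$) can then be carried out by the formula \eqref{eq:phi} or \eqref{eq:phiQ} of Rem. \ref{rem:phiXwithQ}, but that is beyond what the statement of Prop. \ref{prop:sumsurjLA} requires.
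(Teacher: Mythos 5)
Your proposal is correct and follows essentially the same route as the paper: identify $(E[1],Q_{E[1]})$ with $(A[1]\times_M\g_M[1],Q)$ via the splitting, observe that the projection to $A[1]$ intertwines the homological vector fields because $\rho$ is a Lie algebroid morphism, and then apply Thm.~\ref{charvf} with $\cM=\g_M[1]$. No gaps.
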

}

\subsection{{Description of the  $L_{\infty}$-action in graded geometric terms}}
\label{subsec:extclasgraded}

We describe the $L_{\infty}[1]$-algebra morphism  corresponding via d\'ecalage to the $L_{\infty}$-action, \emph{i.e.} $$\phi_X\colon \Gamma(S(A[1]))\to \vX(\cM)[1]$$ as given by Lemma \ref{lem:bijeasy}, where $X$ is determined by $Q=Q_{A[1]}|_{\cN}+X$. {A description in classical terms will be given in the next subsection.} 
{Recall from}{}{ Rem. \ref{rem:phiXwithQ} {that}{} the map $\phi_X$ can be computed {by}{}
{taking iterated brackets with the homological vector field $Q$ on $\cN$.}
Using the isomorphism \eqref{eq:isogr}
(which restricts to the identity of $\g_M[1]$), we will compute $\phi_X$  
working on $E[1]$
 with the homological vector field $Q_{E[1]}$.} 
We refer the reader to Appendix \ref{app:Q}
and \cite[\S 1.1]{ZZL} for the background material on degree 1 graded manifolds used in the sequel.

The zero-th component of $\phi_X$ maps $1\in \RR$ to $X|_{\cM}=Q|_{\cM}=(Q_{E[1]})|_{\cM}\in \vX_1(\cM)$, which just is the homological vector field $Q_{\g_M[1]}$ on $\g_M[1]$ encoding its Lie algebroid structure.  

We now describe the first component $\Gamma(A[1])\to \vX_0(\cM)[1]$ of $\phi_X$ {using  Rem. \ref{rem:phiXwithQ}}.
Let\footnote{{For the sake of readability we will denote by $a$ also the corresponding element of $\Gamma(A)$.}}
 $a\in \Gamma(A[1])$, we have:
$$\phi_X(a)=pr_{\vX({{\varepsilon{(\cM)}}})}([Q,\iota_{a}]|_{\varepsilon(\cM)})=pr_{\vX({{\varepsilon{(\cM)}}})}([Q_{E[1]},\iota_{\sigma(a)}]|_{\varepsilon(\cM)}).$$
 Notice that $\vX_0(\cM)$ {identifies with}{} $\CDO(\g_M)$, the {space of}{} covariant differential operators\footnote{That is, $\RR$-linear maps $D:\Gamma(\g_M)\to \Gamma(\g_M)$ satisfying the derivation property
  $D(f\mu)=fD(\mu)+\underline{D}(f)\mu$  for all $f\in C^{\infty}(M)$ and $\mu \in \Gamma(\g_M)$,
 where $\underline{D}\in \vX(M)$ denotes the symbol of $D$.
 } 
  on the vector bundle $\g_M\to M$ {(specifically,{} the isomorphism  identifies $X\in \vX_0(\cM)$ with $[X,\;]$, the Lie bracket of $X$ with elements of $\vX_{-1}(\cM)$).}  
  Hence to describe $\phi_X(a)$ it suffices to describe the action of the corresponding covariant differential operator on $v\in \Gamma(\g_M)$, \emph{i.e.} it suffices to describe $[\phi_X(a),\iota_v]$ where 
$\iota_v\in \vX_{-1}(\g_M[1])$.
{The element of $\CDO(\g_M)$ corresponding to $\phi_X(a)$ is just the restriction to $ \Gamma(\g_M)$ of the element of $\CDO(E)$ corresponding to $[Q_{E[1]},\iota_{\sigma(a)}]$.}
We have\footnote{This lies in 
  $\vX_{-1}(\cM)$, as a
 consequence of $[\sigma(a),v]_E\subset \ker(\rho)=\g_M$.} 
 $[[Q_{E[1]},\iota_{\sigma(a)}],\iota_v]=\iota_{[\sigma(a),v]_E}$, {see eq. \eqref{eq:br-rho}.}  
   We conclude that, under the identification $\vX_0(\cM)\cong \CDO(\g_M)$, we have: 
\begin{equation}\label{eq:phix1}
\phi_X(a)=[\sigma(a),\ ]_E.
\end{equation}
 
Finally, we  describe the second component $\Gamma(S^2(A[1]))\to \vX_{-1}(\cM)[1]$  of $\phi_X$.
Fix $a,b\in \Gamma(A[1])$.
 We have
$$\phi_X(a,b)=pr_{\vX({\varepsilon(\cM)})}([[Q,\iota_{ a}],\iota_{b}]|_{\varepsilon(\cM)})=pr_{\vX(\varepsilon(\cM)}([[Q_{E[1]},\iota_{\sigma(a)}],\iota_{\sigma(b)}]|_{\varepsilon(\cM)}),$$
where the restriction to $\varepsilon(\cM)$ can be omitted since we are dealing with a degree $-1$ vector field.
We know that $[[Q_{E[1]},\iota_{\sigma(a)}],\iota_{\sigma(b)}]=\iota_{[{\sigma(a)},{\sigma(b)}]_E}$,
so 
\begin{equation}\label{eq:phix2}\phi_X(a,b)=\iota_{[{\sigma(a)},{\sigma(b)}]_E-\sigma([a,b]_{A})}.\end{equation}
Since $\vX_i(\cM)$ is trivial  for $i< -1$, $\phi_X$ has no higher components.

{By construction, the $\linfty$ action of $A$ on $\g_M[1]$ obtained in Prop. \ref{prop:sumsurjLA} is
  compatible with ${Q_{\g_M[1]}}{}$. Hence, by Prop. \ref{prop:liacQ}, it is tantamount to an  $L_{\infty}$-morphism  
  \begin{equation}\label{eq:lintrans}
  \bigl(\Gamma(A),[\ ,\,]_A\bigr)\rightsquigarrow \bigl(\vX({\g_M[1]}{}), -[Q_{{ \g_M[1]}{}}, \ ], [\  ,\,]\bigr)
\end{equation}
 consisting of a unary and a binary component, given by eq. \eqref{eq:phix1} and \eqref{eq:phix2} after
 adjusting signs due to the d\'ecalage isomorphism.}

\begin{remark}\label{rem:Qlocaltranslie}
Let us describe the homological vector field $Q$ of equation \eqref{eq:isogr} in the case $A=TM$.  One chooses local coordinates $(x,v,\xi)$ on $T[1]M\oplus \g_M[1]$, where $x^i$ are coordinates on $M$, $v^i$ the corresponding coordinates on the fibers of $T[1]M$, and $\xi^i$ are coordinates on $\g_M[1]$, then locally we have:
\begin{align}\label{eq:Qtot:atiyah}
-Q=
\frac{1}{2}\sum_{i,j,k} c_{ij}^k\xi^i\xi^j\frac{\partial}{\partial{\xi^k}}
+
\sum_{i,j,k} v^i\left(\Gamma_{ij}^k \xi^j \frac{\partial}{\partial{ \xi^k}} - \frac{\partial}{\partial{x^{{i}}}}\right)
+
 \sum_{i,j,k}\frac{1}{2}\omega_{ij}^k v^i v^j\frac{\partial}{\partial{\xi^k}}
\end{align} 
where $c_{ij}^k,\ \Gamma_{ij}^k, \omega_{ij}^k \in C^\infty(\mathcal{U})$ are defined by:
\begin{align*}
[\xi_i,\xi_j]_{E}=\sum_{k} c_{ij}^k\xi_k,\quad
 [v_i,\xi_j]_E=\sum_{k}\Gamma_{ij}^k \xi_k,\quad
 [v_i,v_j]_E=\omega_{ij}^k \xi_k.
\end{align*}
Here, $v_i,\xi_j$ denotes the basis of sections of $E\simeq TM\oplus \g_M$ dual to  ${v}^i,\xi^j.$ One may notice in particular, on the right hand side of eq. \eqref{eq:Qtot:atiyah}, how each term corresponds to a different component of the $L_\infty$-action $\phi_X$.
\end{remark} 

\subsection{Description of the  $L_{\infty}$-action in classical terms}
\label{subsec:extclas}

It is possible to describe the $L_{\infty}$-morphism \eqref{eq:lintrans} without referring to the graded geometrical setting, as we now explain.

 The splitting $\sigma:A\to E$ induces an $A$-connection $\nabla$ on $\g_M$  given by:
    $$\nabla_av:=[\sigma(a),v]_E,$$
together with a $\g_M$-valued $2$-form on $A$, namely $\omega\in \Omega^2(A,\g_M):=\Gamma(\wedge^2 A^*\otimes \g_M)$, which is defined by
    $$\omega(a,b):=[{\sigma(a)},{\sigma(b)}]_E-\sigma([a,b]_{A}).$$
{It is well known \cite{MK2, olivierextensions} that  the Lie algebroid $A$ and the bundle of Lie algebras $\g_M$, together with $\nabla$ and $\omega$, allow to reconstruct the Lie algebroid extension    
$\rho \colon E\to A$.}

Next, consider the graded vector space $\WDGLA(\g_M)$ with grading given by: 
\begin{align*}
 \WDGLA_{-1}:=\Gamma(\g_M), \quad
 \WDGLA_{0}:=\der(\g_M), 
\end{align*} 
 where $\der(\g_M)$ denotes the space of covariant differential operators that are derivations of the Lie bracket on $\g_M$, namely satisfying              
   $D[\mu,\nu]_{\g_M}=[D\mu,\nu]_{\g_M}+[\mu,D\nu]_{\g_M}$  for all $\mu, \nu \in \Gamma(\g_M)$.
  Then $\WDGLA(\g_M)$ as a structure of a DGLA, with differential  $\textbf{d} \mu :=-[\mu,\cdot]_{\g_M}$ and bracket given by:
\begin{align*}
[ D , D']&= D\circ D'-D'\circ D,\\
[ D , \mu]&= D(\mu).
\end{align*}

\begin{prop}\label{prop:liclasstransLA}
The  components   
\begin{align*}
F_1:=\nabla&:\Gamma({A}) \to \WDGLA_{0},\\
F_2:=\omega\,&:\Gamma({\wedge^2 A}) \to \WDGLA_{-1},
\end{align*}
define a $C^\infty(M)$-linear 
$L_\infty$-morphism $F:\Gamma(A)\leadsto \WDGLA(\g_M)$.

\end{prop}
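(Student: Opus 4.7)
The plan is to deduce this proposition by pulling back the graded geometric $L_{\infty}$-morphism $\phi_X$ computed in \S\ref{subsec:extclasgraded} through a suitable identification of $\WDGLA(\g_M)$ with a sub-DGLA of $\vX(\g_M[1])$.

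First, I would invoke Prop.~\ref{prop:sumsurjLA} to extract from the extension $\rho\colon E\to A$ and the splitting $\sigma$ an $L_{\infty}$-action of $A$ on $\g_M[1]$, compatible with the vertical homological vector field $Q_{\g_M[1]}$ encoding the bundle-of-Lie-algebras structure. By Prop.~\ref{prop:liacQ}, its higher components are equivalent to a $C^{\infty}(M)$-linear $L_{\infty}$-morphism
$$ \widehat{F}\colon \bigl(\Gamma(A),[\ ,\,]_A\bigr)\rightsquigarrow \bigl(\vX(\g_M[1]),\,-[Q_{\g_M[1]},\cdot],\,[\ ,\,]\bigr),$$
whose first two components $\widehat{F}_1,\widehat{F}_2$ are, up to d\'ecalage signs, the maps $\phi_X$ of \eqref{eq:phix1} and \eqref{eq:phix2}.

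Next, I would construct an explicit isomorphism of DGLAs between $\WDGLA(\g_M)$ (concentrated in degrees $-1,0$) and a sub-DGLA of $\vX(\g_M[1])$. In degree $-1$, the identification $\Gamma(\g_M)\cong \vX_{-1}(\g_M[1])$ is given by $v\mapsto \iota_v$; this is an isomorphism since $\g_M[1]$ is a degree $1$ N-manifold. In degree $0$, I would identify $\CDO(\g_M)\cong \vX_0(\g_M[1])$ via $D\mapsto X_D$, where $X_D$ is the unique degree-zero vector field on $\g_M[1]$ with $[X_D,\iota_v]=\iota_{Dv}$ and symbol $\underline{D}$. Under this identification, the condition $[Q_{\g_M[1]},X_D]=0$ is precisely that $D$ be a derivation of $[\ ,\,]_{\g_M}$, so the subspace $\der(\g_M)$ corresponds to the $0$-cocycles of the DGLA $\vX(\g_M[1])$. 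I would then verify directly from eq.~\eqref{eq:br-rho} and the derived bracket formalism that the differential $-[Q_{\g_M[1]},\cdot]$ and the commutator bracket on $\vX(\g_M[1])$ restrict to the DGLA operations $\textbf{d}\mu=-[\mu,\cdot]_{\g_M}$ and $[D,D']=D\circ D'-D'\circ D$, $[D,\mu]=D(\mu)$ of $\WDGLA(\g_M)$.

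Finally, I would check that $\widehat{F}$ actually lands in this sub-DGLA. For $\widehat{F}_1$: by eq.~\eqref{eq:phix1}, $\widehat{F}_1(a)$ corresponds to the CDO $[\sigma(a),\cdot]_E$ on $\g_M$, which is a derivation of $[\ ,\,]_{\g_M}$ by the Jacobi identity on $E$ (since $\g_M$ is an ideal in $E$), hence lies in $\der(\g_M)=\WDGLA_0$. For $\widehat{F}_2$: by eq.~\eqref{eq:phix2}, $\widehat{F}_2(a,b)$ is $\iota_{\omega(a,b)}$, which lies in $\vX_{-1}(\g_M[1])\cong\WDGLA_{-1}$ since $[\sigma(a),\sigma(b)]_E-\sigma([a,b]_A)$ is a section of $\ker\rho=\g_M$. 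Combining these identifications yields exactly the desired $F_1=\nabla$ and $F_2=\omega$ on the nose, and no higher components can occur since $\vX_{i}(\g_M[1])=0$ for $i<-1$.

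The main subtlety will be tracking the d\'ecalage signs: Prop.~\ref{prop:liacQ} inserts a sign on the differential when passing from the $L_{\infty}[1]$-morphism $\phi_X$ to the $L_{\infty}$-morphism $\widehat{F}$, and one has to verify that the signs in the identification $\WDGLA(\g_M)\hookrightarrow \vX(\g_M[1])$ are coherent with those in the definitions of $\nabla$ and $\omega$. Once the identification is set up carefully, however, the $L_{\infty}$-morphism property of $F$ is immediate from that of $\widehat{F}$.
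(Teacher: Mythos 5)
Your proposal is correct in substance, but it takes a genuinely different route from the paper's own proof. The paper proves Prop.~\ref{prop:liclasstransLA} directly in classical terms: it derives the three compatibility conditions \eqref{eq:compJ1}--\eqref{eq:compJ3} from the Jacobi identity in $E$, observes that \eqref{eq:compJ1} guarantees $F$ lands in $\WDGLA(\g_M)$, and then matches \eqref{eq:compJ2} and \eqref{eq:compJ3} term by term with the explicit $L_\infty$-morphism equations \eqref{eq:explicit1} and \eqref{eq:explicit2} of Remark~\ref{rem:explicit-Linfty-morphism}; no graded geometry is used. You instead deduce the statement from the main theorem via Prop.~\ref{prop:sumsurjLA}, Prop.~\ref{prop:liacQ} and the computation of $\phi_X$ in \S\ref{subsec:extclasgraded}, together with the identification of $\WDGLA(\g_M)$ with the sub-DGLA $\vX_{-1}(\g_M[1])\oplus\{X\in\vX_0(\g_M[1]):[Q_{\g_M[1]},X]=0\}$ --- which is precisely the content of the paragraph the paper places \emph{after} the proposition in order to reconcile the classical and graded descriptions. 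So your argument reorganizes material that is all present in the paper, making the classical statement a corollary of the graded construction rather than an independent check of it. What the paper's route buys is a self-contained, elementary proof that makes visible exactly which consequences of the Jacobi identity in $E$ become which structure equations of an $L_\infty$-morphism (the conceptual point of the example); what your route buys is that the consistency of the two descriptions comes for free and the structure equations need not be re-verified by hand. The one loose end in your write-up is the d\'ecalage sign bookkeeping, which you flag but defer; since $\Gamma(A)$ is concentrated in degree zero the d\'ecalage itself introduces no signs (cf.\ the proof of Cor.~\ref{cor:inftyrepmor}), and one checks that $\textbf{d}(\omega(a,b))=-\ad^{\g_M}(\omega(a,b))$ matches \eqref{eq:compJ2}, so $F_2=\omega$ does come out on the nose --- but a complete proof along your lines would have to carry this out rather than assert it.
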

\begin{proof}
It easily follows from the Jacobi identity in $E$ that $\nabla$ and $\omega$ satisfy the following compatibility conditions:
\begin{gather}
\label{eq:compJ1} \nabla_a[\mu,\nu]_{\g_M}=[\nabla_a \mu,\nu]_{\g_M}+[\mu,\nabla_a \nu]_{\g_M},\\
\label{eq:compJ2} \nabla_{[a,b]_A}-[\nabla_a,\nabla_b]=-\ad^{\g_M}\circ \omega(a,b),\\
\label{eq:compJ3} \oint_{a,b,c} \nabla_a(\omega(b,c))-\omega([a,b]_A,c) =0,
\end{gather}
for any $a,b,c\in \Gamma(A)$ and $v,w\in \Gamma(\g_M)$. {Here   $\oint$ denotes cyclic permutations.}

The condition \eqref{eq:compJ1} ensures that $F$ takes indeed values in $\WDGLA(\g_M)$.
The conditions for $F$ to define an $L_\infty$-morphism (see eq. \eqref{eq:explicit1} and \eqref{eq:explicit2} in Remark \ref{rem:explicit-Linfty-morphism}) coincide with the last two compatibility conditions \eqref{eq:compJ2} and \eqref{eq:compJ3}. Finally $F_1$ and $F_2$ are clearly $C^\infty(M)$-linear.
\end{proof}

To see that the $L_{\infty}$-morphism \eqref{eq:lintrans} agrees with the one given in 
Prop. \ref{prop:liclasstransLA}, notice that
$\WDGLA(\g_M)$ {is canonically identified, as a DGLA, with} $\vX_{-1}(\g_M[1])\oplus \{X\in \vX_0(\g_M[1]): [Q_{\g_M[1]},X]=0\}$, which {is}{} a sub-DGLA of the DGLA appearing in \eqref{eq:lintrans}. {In other words, we can regard $F$ as taking values in $\vX(\g_M[1])$.}{}
Further, the components $F_1$ and $F_2$ correspond to {the  components of $\phi_X$ as given in} eq.  \eqref{eq:phix1} and \eqref{eq:phix2}.

\section{Representations up to homotopy of Lie algebroids}\label{sec:special}

\subsection{Representations up to homotopy}\label{infrep}
{Fix a Lie algebroid $\pi\colon A\to M$ and a {(finite rank)} graded vector bundle $V\to M$.} 
We recall briefly the notion of  \emph{representation up to homotopy} of $A$ on $V$, see \cite{rep-hom}\cite{MackenzieVB.RajAlfonsoVBas}. It is given by 
 an operator $D$ on $$\Omega(A,V):=\Gamma(\wedge A^*\otimes V)$$ of degree $1$ with $D^2=0$, satisfying
 $D(\omega\eta)=(d_A\alpha)\eta+(-1)^{k}\alpha(D\eta)$ for all $\alpha\in \Gamma(\wedge A^*)$ and $\eta \in \Omega^k(A,V)$.
 
As explained in \cite[Prop. 3.2]{rep-hom}, $D$ is determined by its action on $\Gamma(V)$, which delivers the following data: $\partial\in \End^{1}(V)$, an $A$-connection $\nabla$ on $V$, and forms $\omega_i\in \Omega^i(A,\End^{1-i}(V))$ for $i\ge 2$, satisfying equations equivalent to the condition $D^2=0$. Concretely, for all $\eta\in \Omega(A,V)$
\begin{equation}\label{eq:Deta}
D(\eta)=\partial(\eta)+d_{\nabla}(\eta)+\omega_2\circ \eta+\cdots
\end{equation}
where $d_{\nabla}\colon  \Omega^k(A,V)\to  \Omega^{k+1}(A,V)$ is defined as usual by $d_{\nabla}(\alpha \otimes v)=d_A\alpha \otimes v+(-1)^k\alpha\wedge \nabla v$ for all $\alpha\in \Omega^k(A)$ and $v\in \Gamma(V)$.

We will need the following notion. Given a graded vector bundle $W\to M$, we denote by $\CDO(W)\to M$ the graded vector bundle whose sections are \emph{covariant differential operators}, \emph{i.e.} linear maps $Y: \Gamma(W) \to \Gamma(W) $ such that there exists a vector field
$\underline{Y} $  on $M$ (called \emph{symbol})  for which the Leibniz rule $Y(f\cdot w) = \underline{Y} (f) w + f \cdot Y(w)$ is satisfied for all  $f \in C^\infty(M)$ and $w \in \Gamma(W)$. Notice that the graded commutator bracket
\begin{equation}\label{eq:grcommut}
[Y,Z]=Y\circ Z-(-1)^{|Y||Z|}Z \circ Y
\end{equation}
 makes $\Gamma(\CDO(W))$ into a graded Lie algebra, and that $\CDO^i(W)=\End^i(W)$ for all $i\neq 0$. (In other words: covariant differential operators of non-zero degree are just endomorphisms).

 We now recall a description of representations up to homotopy in terms of graded geometry. Notice that $A\times_M V=\pi^*V\to A$ is a vector bundle over $A$.
\begin{lemma}\label{lem:inftyrep}
{Assume that $V$ is concentrated in degrees $\le 0$.}
There is a bijection between: 
\begin{itemize}
\item representations up to homotopy of $A$ on $V$
\item  homological vector fields on $\cN:=A[1]\times_M V[1]$  which are linear in the fibers of the projection to $A[1]$ and which are mapped to $Q_{A[1]}$ by this projection. \end{itemize}
\end{lemma}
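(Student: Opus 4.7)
The strategy is to apply Thm. \ref{charvf} in order to translate the problem into one about $L_\infty$-actions, and then to identify the subclass cut out by fiberwise linearity as representations up to homotopy in the sense of \cite{rep-hom}.

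First I would invoke Thm. \ref{charvf} to obtain a bijection between homological vector fields $Q_{tot}$ on $\cN$ projecting to $Q_{A[1]}$ and $L_\infty$-actions $F\colon\Gamma(A)\rightsquigarrow\vX(V[1])$. Writing $Q_{tot}=Q_{A[1]}|_\cN+X$ as in that theorem, note that $Q_{A[1]}|_\cN$ is pulled back from $A[1]$ and has no component in the $V[1]$-direction, so it is trivially fiberwise linear with respect to the vector bundle structure $\cN=\pi^*V[1]\to A[1]$. Hence $Q_{tot}$ is linear in the fibers of $\cN\to A[1]$ if and only if $X$ is. Via Lemma \ref{lem:bijeasy}, and writing $X$ in a frame $\{\xi_i\}$ of $A[1]^*$ as a polynomial in the $\xi_i$ with coefficients in $\vX(V[1])$, this fiberwise linearity is equivalent to asking that each component $F_k\colon\Gamma(\wedge^k A)\to\vX(V[1])$ of the associated $L_\infty$-action take values in the subspace $\vX_{lin}(V[1])\subset\vX(V[1])$ of fiberwise linear vector fields on $V[1]\to M$.

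Next I would use the standard identification of $\vX_{lin}(V[1])$ with $\Gamma(\CDO(V[1]))$ as a graded Lie subalgebra under the graded commutator \eqref{eq:grcommut}, the symbol of a fiberwise linear vector field being a vector field on the base $M$. Combined with the $C^\infty(M)$-multilinearity from Def. \ref{def:liac}(i) and the constraint $F_1(a)|_M=\rho(a)$, this yields
\begin{align*}
F_0 &\in \Gamma(\End^1(V[1])),\\
F_1 &\in \Gamma(\CDO^0(V[1])) \text{ is an $A$-connection on } V[1],\\
F_k &\in \Gamma(\wedge^k A^*\otimes\End^{1-k}(V[1])) \text{ for }k\ge 2,
\end{align*}
where for $k\ne 1$ one uses $\CDO^i(V[1])=\End^i(V[1])$ for $i\ne 0$. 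After absorbing degree shifts between $V$ and $V[1]$, these are precisely the data $(\partial,\nabla,\omega_2,\omega_3,\dots)$ determining an operator $D$ on $\Omega(A,V)$ as in \eqref{eq:Deta}.

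Finally I would check that the $L_\infty$-morphism equations \eqref{eq:Aloo} for $F$, with target the graded Lie algebra $\vX(V[1])$ realized inside $\Gamma(\CDO(V[1]))$, translate term-by-term into $D^2=0$. Here the term $[F_s,F_{n-s}]$ of \eqref{eq:Aloo} becomes, under the identification with $\CDO$'s, a graded commutator which upon summation over $n$ reassembles the action of $D\circ D$ on $\Gamma(V)$; by the Leibniz rule, this determines $D^2$ on all of $\Omega(A,V)$. The main obstacle is the sign and degree-shift bookkeeping: the d\'ecalage convention used to pass between $F$ as a curved $L_\infty$-morphism and $\phi_X$ as a curved $L_\infty[1]$-morphism, together with the conversion between the Lie bracket of vector fields on $V[1]$ and the graded commutator on $\Gamma(\CDO(V[1]))$, must be reconciled with the conventions of \cite{rep-hom}. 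Once this is pinned down, the correspondence $F_0\leftrightarrow\partial$, $F_1\leftrightarrow\nabla$, $F_k\leftrightarrow\omega_k$ (up to signs, as anticipated in item (b) of the Introduction) is an unambiguous bijection, and the two sides of the claimed correspondence match as desired.
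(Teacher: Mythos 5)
Your strategy is sound but it is genuinely different from the paper's, and in fact inverts its logical order. The paper proves Lemma \ref{lem:inftyrep} \emph{directly}, without invoking Thm. \ref{charvf}: it transports the components $\partial+\nabla+\omega_2+\cdots$ of $D$ into a degree-one element $X\in\Gamma(S(A[1]^*))\otimesM\vX_{lin}(V[1])$ via the isomorphism \eqref{eq:isocdo}, establishes once and for all the identity $\iota_{D\eta}=[Q_{A[1]}+X,\iota_\eta]$, and then gets $D^2=0\Leftrightarrow[Q_{A[1]}+X,Q_{A[1]}+X]=0$ from a one-line Jacobi-identity argument (using that a fiberwise linear vector field killing every $\iota_\eta$ must vanish). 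Only afterwards does the paper apply Thm. \ref{charvf} to upgrade the lemma to the bijection with $L_\infty$-actions (Prop. \ref{prop:inftyrep}), and it \emph{deduces} the explicit $L_\infty$-morphism formulae with their signs (Cor. \ref{cor:inftyrepmor}) as a consequence rather than verifying them. You go the other way: Thm. \ref{charvf} first, then a term-by-term matching of the structure equations \eqref{eq:Aloo} against $D^2=0$. Your reductions up to that point are fine (the characterization of fiberwise linearity of $Q_{tot}$ in terms of $F_k$ landing in $\vX_{lin}(V[1])$, and the identification of the components with $(\partial,\nabla,\omega_2,\dots)$ via $\CDO^i=\End^i$ for $i\neq 0$). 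But the step you flag as ``sign and degree-shift bookkeeping'' is not a routine afterthought: it is the entire content of the equivalence, and it is precisely the computation the paper's ordering is designed to avoid. As written, your proof asserts rather than establishes that the d\'ecalage signs, the commutator-versus-composition conventions, and the reassembly of $\sum_s[F_s,F_{n-s}]$ into $D\circ D$ all cohere; to make the argument complete you would either have to carry out that verification explicitly (effectively reproving Cor. \ref{cor:inftyrepmor} from scratch), or replace it by the paper's derived-bracket shortcut via $\iota_{D\eta}=[Q_{A[1]}+X,\iota_\eta]$, which packages all of those signs into a single identity.
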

The above lemma is {well known}{}, see for instance   \cite[Rem. 4.2, Lemma 4.4]{m:lamods} or \cite[Lemma 3.1]{CCShengHigherExt}, where the dual representation up to homotopy is used in order to construct\footnote{
In brief, given a representation up to homotopy  $D$, the construction is as follows. Take the dual representation  up to homotopy on $V^*$, which can be viewed as an representation  up to homotopy 
 $\Delta$ on $(V[1])^*$.  
Notice that $\Omega(A,(V[1])^*)$ consists exactly of the functions  on $A[1]\times_M V[1]$ that are fiber-wise linear {w.r.t. the fibers of the    projection $A[1]\times_M V[1]\to A[1]$.} Together with $C(A[1])$, {they}{} generate all the functions. Hence
we can extend $\Delta$ to a fiberwise linear homological vector field on $A[1]\times_M V[1]$ which projects to $Q_{A[1]}$.}
 the homological vector field. In what follows we will construct the homological vector field is a slightly different way, following \cite[\S 6]{RajMarco}, which makes more transparent the role of the components of $D$, {at the expense of being}{} less geometric.
 
{Even though we will not use this fact in the sequel, {let us point out}{} that the statement of Lemma \ref{lem:inftyrep} remains true even without the degree assumption on $V$, by the reasoning explained just before Prop. \ref{prop:inftyrep} below.}

\begin{proof}[Proof of Lemma \ref{lem:inftyrep}]
We start with a general remark (see for instance \cite[Lemma 1.6]{ZZL} for more details).
Given a graded vector bundle $W\to M$, there is an isomorphism of graded Lie algebras
\begin{equation}\label{eq:isocdo}
\Gamma(\CDO(W))\cong {{\vX}_{lin}(W)}{}
\end{equation}
between the covariant differential operators and the   vector fields on $W$ {that preserve}{} the fiberwise linear functions. The isomorphism is given by $B\mapsto Y_B$, where the vector field $Y_B$ is determined by $[Y_B,\iota_w]=\iota_{Bw}$ for all $w\in \Gamma(W)$ and $\iota_w$ denotes the corresponding vertical, constant vector field on $W$. (One checks easily that the  brackets correspond using the Jacobi identity.)

Fix a  representation  up to homotopy  $D$. The sum of its components $\partial+ \nabla+ \omega_2+\dots$ is a degree $1$ element of $\Omega(A,\CDO(V))$. Applying the   isomorphism {\eqref{eq:isocdo}} to $\Gamma(\CDO(V))=
\Gamma(\CDO(V[1]))$, we view the sum as an element $$X\in \Gamma(S(A[1]^*)\otimesM\vX_{lin}(V[1]).$$ Notice that $\nabla_a\in \CDO_0(V[1])$ has symbol $\rho(a)$ for all $a\in \Gamma(A)$, since $\nabla$ is an $A$-connection. Hence $Q_{A[1]}|_{\cN}+X$ is tangent to $\cN$, by\footnote{This is the only place in the present proof where it is used that  $V[1]$ is an N-manifold.} the proof of Lemma \ref{lem:bijphi}. 
This vector field on $\cN$  is related to $D$ by
\begin{equation}\label{eq:relDQ}
\iota_{D\eta}=[Q_{A[1]}+X,\iota_{\eta}]
\end{equation}
for all $\eta \in \Omega(A,V)\cong \Gamma(S(A[1]^*)\otimesM {\Gamma(V[1])}{}$, as a consequence of the   isomorphism \eqref{eq:isocdo} and eq. \eqref{eq:Deta}.
The vector field $Q_{A[1]}+X$ is {a}{} fiberwise linear vector field, projects to $Q_{A[1]}$, and it is homological: the   relation {\eqref{eq:relDQ}} and the Jacobi identity imply $$0=\iota_{D^2\eta}=\frac{1}{2}[[Q_{A[1]}+X,Q_{A[1]}+X],\iota_{\eta}],$$
which forces $[Q_{A[1]}+X,Q_{A[1]}+X]$ to vanish since it is a fiberwise linear vector field.
Reversing the above steps proves that this correspondence is indeed a bijection.
\end{proof}

We can now apply Thm. \ref{charvf}:  
\begin{prop}\label{prop:inftyrep}
{Let $A$ be a Lie algebroid and $V$ a  graded vector bundle over the same base.} 
There is a bijection between   
\begin{itemize}
\item representations up to homotopy  of $A$ on $V$
\item  $L_{\infty}$-actions of $A$  on $V[1]$ that take values in  $\vX_{lin}(V[1])$, the fiberwise linear vector fields on $V[1]$.
\end{itemize}
\end{prop}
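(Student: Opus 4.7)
The plan is to derive the proposition by composing two bijections already at our disposal. On one side, Lemma \ref{lem:inftyrep} (in the general form noted in the remark just before the proposition, valid without any degree restriction on $V$) identifies representations up to homotopy of $A$ on $V$ with the homological vector fields on $\cN := A[1]\times_M V[1]$ that project to $Q_{A[1]}$ and are linear along the fibers of the projection $\cN \to A[1]$. On the other side, the main Theorem \ref{charvf} matches homological vector fields on $\cN$ projecting to $Q_{A[1]}$ with $\linfty$-actions of $A$ on $V[1]$. It therefore suffices to check that, under the bijection of Theorem \ref{charvf}, the fiberwise linearity condition on $Q_{tot}$ corresponds exactly to the condition that the $\linfty$-action take values in $\vX_{lin}(V[1])$.

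Writing $Q_{tot} = Q_{A[1]}|_{\cN} + X$ with $X\in \Gamma(S(A[1]^*))\otimesM \vX(V[1])$ of degree $1$, as in the proof of Theorem \ref{charvf}, the key observation is that $Q_{A[1]}|_{\cN}$ involves no $V[1]$-coordinates at all, so the fiberwise linearity of $Q_{tot}$ along $\cN \to A[1]$ reduces to the requirement that $X$ take values in the graded Lie subalgebra $\vX_{lin}(V[1]) \subset \vX(V[1])$. The next step is to compare this with the $\linfty$-action side via Lemma \ref{lem:bijeasy}: the defining formula \eqref{eq:phi} expresses each component $\phi_X(s_1,\dots,s_k)$ as an iterated graded commutator of $X$ with the contraction operators $\iota_{s_i}$, and these operators act only on the $A[1]$-variables. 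Since the $\iota_{s_i}$'s therefore pass through the $\vX(V[1])$-factor, the subspace $\Gamma(S(A[1]^*))\otimesM \vX_{lin}(V[1])$ corresponds precisely to the $C^\infty(M)$-multilinear maps $\Phi\colon \Gamma(S(A[1])) \to \vX_{lin}(V[1])[1]$.

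The main technical point is to verify the reverse implication in this last correspondence: starting from an $\linfty$-action whose components all land in $\vX_{lin}(V[1])$, one must recover $X$ and confirm that it belongs to $\Gamma(S(A[1]^*))\otimesM \vX_{lin}(V[1])$. This is carried out locally by choosing a frame $\{a_i\}$ of $A$ with dual coordinates $\{\xi_i\}$ on $A[1]$, expanding $X = \sum f_j \otimesM v_j$ with $f_j$ a product of $\xi_i$'s with constant coefficients, and reading off the $v_j$ from the values of $\phi_X$ on the corresponding symmetric products of the $s_i := a_i[1]$; these values being in $\vX_{lin}(V[1])$ by hypothesis forces each $v_j$ to be as well. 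Finally, as a consistency check, one should match the components: $F_0 \in \vX_1^{vert}(\cM)\cap \vX_{lin}(V[1])$ corresponds under $\vX_{lin}(V[1]) \cong \Gamma(\CDO(V[1]))$ to the endomorphism $\partial \in \End^1(V)$, while condition ii) of Def. \ref{def:liac} on $F_1$ recovers the symbol condition $\nabla_a|_{\text{symbol}} = \rho(a)$ of an $A$-connection. Putting everything together, the composition of the two bijections gives the statement.
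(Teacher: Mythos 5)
Your overall strategy -- compose Lemma \ref{lem:inftyrep} with Theorem \ref{charvf} and check that fiberwise linearity of $Q_{tot}$ matches the condition that the action lands in $\vX_{lin}(V[1])$ -- is exactly the paper's argument \emph{when $V$ is concentrated in degrees $\le 0$}, and in that case your verification is fine. The gap is in how you handle a general graded vector bundle $V$. You invoke Theorem \ref{charvf} with $\cM = V[1]$, but that theorem is stated and proved only for $\cM$ an \emph{N-manifold} with body $M$; if $V$ has components in positive degrees, $V[1]$ acquires coordinates of negative degree and is merely a $\ZZ$-graded manifold. This is not a cosmetic restriction: the paper exhibits an explicit counterexample (the remark following Example \ref{ex:M}, with $\cM = \RR[1]\times\RR[-1]\times M$) showing that Lemma \ref{lem:bijphi} -- the step translating tangency of $Q_{A[1]}|_{\cN}+X$ to $\cN$ into the anchor condition on $\Phi_1$ -- fails for $\ZZ$-graded $\cM$. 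Appealing to the ``general form'' of Lemma \ref{lem:inftyrep} does not repair this, since the second bijection in your composition is still unavailable. (One might hope the fiberwise-linear hypothesis saves the day, since linear degree-zero vector fields act on $C^{\infty}(M)$ only through their symbol, but you neither state nor prove such a refinement.)

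The paper closes this gap with a degree shift: the same operator $D$ defines a representation up to homotopy on $V[k]$ for every $k$, and since $V$ has finite rank one can choose $k$ so that $V[k]$ is concentrated in degrees $\le 0$; then $(V[k])[1]$ is an N-manifold, Lemma \ref{lem:inftyrep} and Theorem \ref{charvf} apply verbatim, and one transports the result back via the isomorphism of graded Lie algebras $\vX_{lin}(V[1])\cong \vX_{lin}((V[k])[1])$. Adding this reduction (or restricting the statement you prove to $V$ concentrated in degrees $\le 0$) would make your argument complete.
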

\begin{proof}
{If $V$ is concentrated in degrees $\le 0$, then $V[1]$ is an N-manifold, and thanks to Lemma \ref{lem:inftyrep} we 
can  apply immediately Thm. \ref{charvf}. For the general case we argue as follows. If an operator $D$ defines a representation  up to homotopy of $A$ on $V$, then the same operator $D$  defines an representation  up to homotopy  on $V[k]$  for all $k\in \ZZ$.
 Since $V$ is assumed to be of finite rank, $V_i=\{0\}$ for $i$ larger than some integer $k$, and then $V[k]$ is concentrated in degrees $\le 0$ and $(V[k])[1]$ is an N-manifold. Hence we can apply  Lemma \ref{lem:inftyrep} and Thm. \ref{charvf}  to the N-manifold $V[k]$. We conclude {by}{} noticing that the degree shift induces an isomorphism of graded Lie algebras  $\vX_{lin}(V[1])\cong \vX_{lin}((V[k])[1])$.}
\end{proof}

We spell out the resulting $L_{\infty}$-action in classical terms, extending \cite[Prop. 6.7]{RajMarco}. 
\begin{cor}\label{cor:inftyrepmor}
Consider a representation up to homotopy  of $A$ on $V$ with components  $\partial, \nabla=:\omega_1, \omega_2,\dots$. It induces an $L_{\infty}$-morphism
$$F\colon \bigl(\Gamma(A),[\ ,\,]_A\bigr) \rightsquigarrow \bigl(\Gamma(\CDO(V)), [\ ,\,], -[\partial,\ ]\bigr),$$
into the DGLA of covariant differential operators  endowed with the graded commutator bracket \eqref{eq:grcommut}, whose components for all $k\ge 1$ are
$$F_k= (-1)^{\frac{k(k-1)}{2}} \omega_k.$$ 
 \end{cor}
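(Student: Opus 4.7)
The plan is to apply Prop.~\ref{prop:inftyrep} to reinterpret the representation up to homotopy as an $L_{\infty}$-action of $A$ on $V[1]$ with values in fiberwise linear vector fields, and then transport everything along the isomorphism \eqref{eq:isocdo} identifying $\vX_{lin}(V[1])$ with $\Gamma(\CDO(V[1])) = \Gamma(\CDO(V))$.

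First I would spell out how the given data assemble into the vector field $X \in \Gamma(S(A[1]^*)) \otimes_{C^\infty(M)} \vX_{lin}(V[1])$ of Lemma~\ref{lem:inftyrep}: under the isomorphism \eqref{eq:isocdo}, each $\omega_k \in \Omega^k(A,\CDO^{1-k}(V))$ corresponds to a fiberwise linear vector field of the appropriate degree, and their sum is $X$. In particular $\partial$ corresponds to $Q_{V[1]} := X|_{V[1]} \in \vX_1(V[1])$, which is precisely the zero-th component $F_0$ of the associated $L_\infty$-action, and the condition $\partial^2 = 0$ is equivalent to $Q_{V[1]}$ being homological. By Prop.~\ref{prop:liacQ}, the higher components of the $L_{\infty}$-action compatible with $Q_{V[1]}$ define an (honest, noncurved) $L_{\infty}$-morphism into the DGLA $(\vX(V[1]), -[Q_{V[1]},\,\cdot\,], [\,\cdot\,,\,\cdot\,])$. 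Transporting along \eqref{eq:isocdo}, we obtain the claimed $L_{\infty}$-morphism $F$ with target $(\Gamma(\CDO(V)), [\,\cdot\,,\,\cdot\,], -[\partial,\,\cdot\,])$, and only the explicit identification of $F_k$ with a multiple of $\omega_k$ remains.

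To compute $F_k$ explicitly for $k \geq 1$, I would use the formula \eqref{eq:phi} of Lemma~\ref{lem:bijeasy}. For $a_1,\dots,a_k \in \Gamma(A)$ and $s_i := a_i[1]$,
\[
\phi_X(s_1 \cdots s_k) = \bigl[[\cdots[X, \iota_{s_1}], \dots], \iota_{s_k}\bigr]\bigl|_{V[1]},
\]
and a direct calculation (using that only the $\Gamma(S^k(A[1]^*))$-component of $X$ contributes, and that under \eqref{eq:isocdo} the $\iota_{s_i}$ bracket corresponds to contraction with $a_i$) shows that, as an element of $\Gamma(\CDO^{1-k}(V))$, this equals $\omega_k(a_1,\dots,a_k)$ up to a sign. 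Finally, the décalage isomorphism \eqref{deca} converts the $L_{\infty}[1]$-morphism $\phi_X$ into the $L_{\infty}$-morphism $F$; since all $a_i$ have degree $0$ in $\Gamma(A)$, the décalage sign on the $k$-th component is exactly $(-1)^{k(k-1)/2}$, yielding $F_k = (-1)^{k(k-1)/2}\omega_k$.

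The main obstacle will be a careful bookkeeping of signs: one must reconcile the sign appearing in the bracket convention of Lemma~\ref{lem:bijeasy} (which itself differs in sign from \cite{RajMarco}, as noted in that lemma's footnote), the sign in the formula \eqref{eq:curly} defining $\{\,\cdot\,,\,\cdot\,\}$ on $\vX(\cM)[1]$, and the standard décalage sign for antisymmetrising degree-zero inputs. Once these are aligned, the final sign $(-1)^{k(k-1)/2}$ drops out and the statement follows. As a consistency check, the $k=1$ case recovers $F_1 = \nabla = \omega_1$, matching the zero-sign of décalage for a single degree-zero argument; and the equations of an $L_{\infty}$-morphism (eq.~\eqref{eq:Aloo}) applied to $F$ reproduce the well-known structural equations for a representation up to homotopy, providing an independent verification.
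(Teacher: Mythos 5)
Your overall strategy is exactly the paper's: reinterpret the representation up to homotopy via Prop.~\ref{prop:inftyrep}, transport along the isomorphism \eqref{eq:isocdo}, compute $\phi_X$ via Lemma~\ref{lem:bijeasy} using the vector field $X$ from Lemma~\ref{lem:inftyrep} and the relation \eqref{eq:relDQ}, and conclude with Prop.~\ref{prop:liacQ}. The identification $F_0=\partial=Q_{V[1]}$ and the passage to the noncurved morphism are all correct.

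However, there is a concrete error in your sign bookkeeping: you attribute the factor $(-1)^{k(k-1)/2}$ to the d\'ecalage isomorphism, claiming that ``since all $a_i$ have degree $0$ in $\Gamma(A)$, the d\'ecalage sign on the $k$-th component is exactly $(-1)^{k(k-1)/2}$.'' By the paper's formula \eqref{deca}, the d\'ecalage sign is $(-1)^{(n-1)|v_1|+\dots+|v_{n-1}|}$ computed with the \emph{unshifted} degrees $|v_i|$; since $\Gamma(A)$ is concentrated in degree zero this is $+1$, i.e.\ the d\'ecalage introduces \emph{no} sign here (the paper states this explicitly in its proof). The sign $(-1)^{k(k-1)/2}$ actually arises from the step you left as ``equals $\omega_k(a_1,\dots,a_k)$ up to a sign'': the contractions $\iota_{a_i}$ are degree $-1$ vector fields on $A[1]\times V[1]$, and reordering the iterated graded Lie brackets,
\begin{equation*}
\bigl[[\,\cdots [X, \iota_{a_1}],\dots],\iota_{a_k}\bigr]=(-1)^{\frac{k(k-1)}{2}}\bigl[\iota_{a_k},[\,\cdots [ \iota_{a_1},X],\dots]\bigr],
\end{equation*}
is what produces the stated factor, the right-hand side being the expression that evaluates to $\omega_k(a_1,\dots,a_k)$ via \eqref{eq:relDQ}. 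So your final formula is correct, but the justification for the sign needs to be moved from the d\'ecalage step to the bracket-reordering step; as written, following your plan literally with the paper's conventions would yield no sign at all.
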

\begin{proof} By Thm. \ref{charvf}, the $L_{\infty}$-action of Prop. \ref{prop:inftyrep} is {obtained by}{} applying the d\'ecalage isomorphism \eqref{deca} to the $L_{\infty}[1]$-morphism
 $\phi_X\colon \Gamma(S(A[1]))\to \vX_{lin}(V[1])[1]$. Since $\Gamma(A)$ is concentrated in degree zero, the d\'ecalage isomorphism does not introduce any new signs. {Furthermore,}{}
 $\phi_X$ is defined in Lemma \ref{lem:bijeasy}, using the  vector field $X$ appearing in the proof of Lemma \ref{lem:inftyrep},
 and reads
\begin{equation*}
(\phi_X)_k(a_1,\dots,a_k)  :=\bigl[[\,\cdots [X, \iota_{a_1}],\dots],\iota_{a_k}\bigr]|_{V[1]}=(-1)^{\frac{k(k-1)}{2}}\bigl[\iota_{a_k},[\,\cdots [ \iota_{a_1},X,],\dots]\bigr]|_{V[1]}
\end{equation*}
for all $a_i \in \Gamma(A[1])$. Recalling the isomorphism \eqref{eq:isocdo},
the evaluation on a section $\eta\in \Gamma(V[1])$ is obtained by taking the Lie bracket of the above expression with $\iota_{\eta}$, noticing that $Q_{A[1]}$ and  $\iota_{\eta}$ commute (viewed as vector fields on the product $A[1]\times V[1]$) and relating $Q_{A[1]}+X$ to $D$ via \eqref{eq:relDQ}. {We conclude by  applying  
Prop. \ref{prop:liacQ}}.
\end{proof}

 \subsection{Representations  up to homotopy arising as linearizations}
Consider a  a graded vector bundle $p\colon \cV\to M$  with fibers concentrated in negative degrees  and denote by $\vX_M(\cV)$ the vector fields that are tangent {to the zero section} $M$. 

There is a well-defined linearization map: 
$${\vX_M(\cV)}\to \vX_{lin}(\cV)$$
that associates to a vector field $Y$ its linearization $Y_{lin}$,  defined as the vector field on $\cV$ that acts on $p^*f$ as $p^*(Y(f))$ for $f\in C^{\infty}(M)$, and on a 
fiberwise linear function $\xi\in \Gamma(\cV^*)$ as
the linear component of $Y(\xi)$.  Note that for  this  to make sense, it is necessary that $Y\in \vX_M(\cV)$.

The following proposition states that certain $L_{\infty}$-actions can be linearized, giving rise to a representation  up to homotopy.

\begin{prop}\label{cor:gap}  
Let $F\colon \Gamma(A) \rightsquigarrow \vX(\cV)$
be an $L_{\infty}$-action of a Lie algebroid $A\to M$ on a graded vector bundle $\cV\to M$ concentrated in negative degrees. Assume that $F$ takes values in {$\vX_M(\cV)$}, 
then the composition
 \begin{equation}\label{eq:xlin}
\Gamma(A) \rightsquigarrow  {\vX_M(\cV)}\to \vX_{lin}(\cV)
\end{equation}
is also an $L_{\infty}$-action of $A$ on $\cV$. By Prop. \ref{prop:inftyrep} it  corresponds to a representation  up to homotopy of $A$ on $\cV[-1]$.
\end{prop}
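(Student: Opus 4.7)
The overall strategy is to first show that the linearization
$\vX_M(\cV) \to \vX_{lin}(\cV)$, $Y\mapsto Y_{lin}$, is a strict morphism of graded Lie algebras, and then to observe that the composition of a curved $L_{\infty}$-morphism with a strict morphism is again a curved $L_{\infty}$-morphism, and finally to verify the three defining properties of an $L_{\infty}$-action in Def. \ref{def:liac}.

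For the first step, let $\mathcal{I}\subset C(\cV)$ be the graded ideal of functions vanishing on the zero section $M\subset\cV$, and define the decreasing filtration
\begin{equation*}
\vX^{(k)}(\cV):=\bigl\{Y\in\vX(\cV) : Y(\mathcal{I})\subset \mathcal{I}^{k}\bigr\},\qquad k\ge 1.
\end{equation*}
The plan is to check, by a direct Leibniz-rule computation, that $[\vX^{(k)},\vX^{(l)}]\subset\vX^{(k+l-1)}$. In particular $\vX^{(1)}=\vX_M(\cV)$ is a graded Lie subalgebra of $\vX(\cV)$, and $\vX^{(2)}$ is a graded Lie ideal in it. Working in local coordinates $(x^i,\xi^{\alpha})$ adapted to the zero section one verifies that the kernel of linearization is exactly $\vX^{(2)}$, and that the inclusion $\vX_{lin}(\cV)\hookrightarrow \vX_M(\cV)$ is a splitting of the projection $\vX_M(\cV)\to \vX_M(\cV)/\vX^{(2)}$. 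Consequently the linearization is identified with the quotient map followed by this splitting, and is therefore a strict morphism of graded Lie algebras.

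For the second step, given the strict Lie algebra morphism above, the composition
\begin{equation*}
F^{lin}\colon\Gamma(A)\rightsquigarrow \vX_M(\cV)\to \vX_{lin}(\cV)\hookrightarrow\vX(\cV)
\end{equation*}
is again a curved $L_{\infty}$-morphism, with components $F^{lin}_k=(F_k)_{lin}$ for every $k\ge 0$: the $L_{\infty}$-relations for $F$ (see Remark \ref{rem:linftyMarkl}) are preserved because each term involves only brackets in the target, which the linearization intertwines with the brackets in $\vX_{lin}(\cV)$.

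Finally, I would verify the three conditions of Def. \ref{def:liac} for $F^{lin}$. Condition i) is immediate: the formula for $Y_{lin}$ is $C^\infty(M)$-linear in $Y$, so $C^\infty(M)$-multilinearity of $F$ descends to $F^{lin}$. Condition ii) holds because for any $Y\in\vX_M(\cV)$ one has $Y_{lin}|_{M}=Y|_{M}$ (in local coordinates both equal $Y(x^i)|_{\xi=0}\,\partial_{x^i}$), so $F^{lin}_1(a)|_M=F_1(a)|_M=\rho(a)$. For condition iii), note first that $F_0\in\vX^{vert}_1(\cV)$ is automatically tangent to $M$: since $\cV$ has fibers in negative degrees, its fiber coordinates have positive degree, and a vertical vector field of degree $1$ cannot have a constant-in-the-fibers component at $M$; then the linearization of a vertical vector field is vertical, so $F^{lin}_0(C^\infty(M))=0$.

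The main obstacle is the first step, namely verifying that linearization is a Lie algebra morphism. Although this is classical for ordinary manifolds, in the graded setting one must be mindful of signs and degree bookkeeping, and the cleanest route is the filtration argument sketched above, which reduces the statement to the Leibniz-rule estimate $[\vX^{(k)},\vX^{(l)}]\subset\vX^{(k+l-1)}$.
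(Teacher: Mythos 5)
Your proof is correct, but it follows a genuinely different route from the paper's. You work entirely on the $L_{\infty}$-morphism side: your key lemma is that the linearization $\vX_M(\cV)\to\vX_{lin}(\cV)$ is a strict morphism of graded Lie algebras, established via the $\mathcal{I}$-adic filtration $\vX^{(k)}$ and the estimate $[\vX^{(k)},\vX^{(l)}]\subset\vX^{(k+l-1)}$ (which does hold; in particular $\vX^{(2)}$ is the kernel of linearization on $\vX^{(1)}=\vX_M(\cV)$ and is an ideal there, and $\vX_{lin}(\cV)$ splits the quotient as a subalgebra). Post-composing the curved $L_{\infty}$-morphism $F$ with this strict morphism then preserves the structure equations of Remark \ref{rem:linftyMarkl}, and conditions i)--iii) of Def. \ref{def:liac} descend as you say. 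The paper instead passes through Thm. \ref{charvf} twice: it converts $F$ into the homological vector field $Q=Q_{A[1]}|_{\cN}+X$ on $\cN=A[1]\times_M\cV$, splits $X=X_{lin}+X_{rest}$, and shows that $\widehat{Q}:=Q_{A[1]}|_{\cN}+X_{lin}$ squares to zero by observing that $[\widehat{Q},\widehat{Q}]$ preserves the fiberwise linear functions $\Gamma(\pi^*\cV^*)$ while the remaining terms of $[Q,Q]=0$ raise the polynomial degree --- morally the same filtration argument as yours, but carried out on $\cN$ rather than on $\cV$. Your version is more self-contained (it does not invoke the main theorem at all) and isolates a reusable fact, namely that linearization is a strict graded Lie algebra morphism on vector fields tangent to the zero section; the paper's version stays within its graded-geometric machinery and directly produces the fiberwise linear homological vector field $\widehat{Q}$, which is what Lemma \ref{lem:inftyrep} needs to identify the output with a representation up to homotopy. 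Both arguments use the hypothesis that $F$ lands in $\vX_M(\cV)$ in an essential way --- you need it for linearization to be defined, the paper needs it for $X_{rest}$ to vanish on $M$ so that $\widehat{Q}$ remains tangent to $\cN$. One minor remark: your observation that $F_0$ is automatically tangent to $M$ for degree reasons is correct but redundant here, since the hypothesis already places all components of $F$ in $\vX_M(\cV)$.
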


\begin{proof}[Proof of the Prop. \ref{cor:gap}] 
Applying Thm. \ref{charvf} to the $L_{\infty}$-action $F$ delivers a homological vector field $Q$ on $\cN:=A[1]\times_M \cV$ that projects to $Q_{A[1]}$.
It is of the form $Q=Q_{A[1]}|_{\cN}+X$, where $X\in \Gamma(S(A[1]^*))\otimes_{C^{\infty}(M)}\vX_M(\cV)$ is as in Lemma \ref{lem:bijphi}. {Notice that here we used  our assumption on $F$.}
Split $X$ as $X=X_{lin}+X_{rest}$, where $X_{lin}$ consists of the components of $X$ that are fiber-wise linear on $\cV$, {\emph{i.e.} that preserve the fiber-wise linear functions}. Notice  
that $X_{rest}$
consists of vector fields that are tangent to the fibers of  
$p\colon \cV\to M$ ({by degree reasons}) and vanish at least quadratically on $M$.
{Then  $$\widehat{Q}:=Q_{A[1]}|_{\cN}+X_{lin}$$ is tangent to $\cN$. To see this apply Lemma \ref{lem:bijphi} using the fact that $[X_{rest},
\iota_a]|_{M}=0$ for all $a\in \Gamma(A)$ and that $X_{rest}|_M=0$.}

By construction,  $\widehat{Q}$ preserves  $\Gamma(\pi^*\cV^*)$,  the functions on $\cN$ that are  linear on the fibers of the projection $\cN\to A[1]$. Here $\pi\colon A[1]\to M$ is the vector bundle projection.

{We now argue that $[\widehat{Q},\widehat{Q}]$ vanishes.}
We have $$0=[Q,Q]=[\widehat{Q},\widehat{Q}]+2[\widehat{Q}, X_{rest}]+[X_{rest},X_{rest}],$$ and 
$[\widehat{Q},\widehat{Q}]$ preserves $\Gamma(\pi^*\cV^*)$ while the other two summands on the r.h.s. map it to $\Gamma(S^{\ge 2}\pi^*\cV^*)$, hence we  see that  $[\widehat{Q},\widehat{Q}]$ annihilates $\Gamma(\pi^*\cV^*)$. Notice that $\widehat{Q}$ projects to $Q_{A[1]}$ {under $\cN\to A[1]$} since 
$X_{rest}$ is vertical. So { $[\widehat{Q},\widehat{Q}]$ projects to $[Q_{A[1]},Q_{A[1]}]=0$, and}
 hence $[\widehat{Q},\widehat{Q}]$ annihilates $C(A[1])$ too.  We conclude that $\widehat{Q}$ is a homological vector field.  Therefore we can apply again Thm. \ref{charvf}, which delivers 
an $L_{\infty}$-action, namely the one given in \eqref{eq:xlin}.
\end{proof}

\section{Representations up to homotopy and  cocycles}

In this section we show that representations up to homotopy of a Lie algebroid, together with a cocycle, correspond to certain to $L_{\infty}$-actions. We do so in \S \ref{sec:cocyclesinty}, making essential use of   
 Thm. \ref{charvf}. As a warm-up, in \S \ref{sec:cocycles} we consider the case of ordinary representation and provide a proof by direct computation.

Before explaining this, we give a simple example of this correspondence.

\begin{ex}[{Lie algebra cocycles}]
Fix a Lie algebra $\g$ and $n\ge 2$.
There is a bijection between $n$-cocycles for $\g$ and $L_{\infty}$-algebra structure on $\RR[n-1]\oplus \g$ such that the projection to $\g$ is a (strict) morphism\footnote{This is equivalent to saying that the binary bracket of the $L_{\infty}$-algebra restricted to $\{0\}\oplus \g$ is just the Lie bracket of $\g$, and that the differential of the $L_{\infty}$-algebra vanishes.}.  Given a cocycle  $\eta$, the corresponding $L_{\infty}$-algebra structure has just two multibrackets, namely the Lie bracket of $\g$, and the $n$-ary bracket given by $\eta$. See \cite[Thm. 55]{baezcrans}, which is phrased for arbitrary representations of $\g$. This bijection is an example of the correspondence mentioned above.

Notice that there is also a bijection  between $n$-cocycles for $\g$ and $L_{\infty}$-morphisms from $\g$ to\footnote{Here $\RR[n-1]$ is endowed with the zero $L_{\infty}$-algebra structure (the only one it admits).}
 $\RR[n-1]$. Given an $n$-cocycle $\eta\in \wedge^n \g^*$, the corresponding $L_{\infty}$-morphism  $F$ has only one component, namely 
$F_n=\eta\colon \wedge^n\g\to \RR$. We learnt this from Christopher L. Rogers, and its proof is immediate using eq. \eqref{eq:Aloo}.  We will need an analogue of this bijection in order to prove our correspondence in \S \ref{sec:cocyclesinty}. 
\end{ex}

\subsection{Lie algebroid cocycles with values in representations}\label{sec:cocycles}

Let $A\to M$ be a Lie algebroid {and}{} $V\to M$  a representation of $A$, \emph{i.e.} a 
flat $A$-connection on a   vector bundle $V$. {We remark that this} is the same as a 
Lie algebroid morphism $ A\to \CDO(V)$ covering $\id_M$, where $\CDO(V)$ denotes the Lie algebroid whose sections are covariant differential operators on $V$.
{Further,}{} there is a canonical isomorphism\footnote{Under the identification 
$\vX_0(V[1])=\Gamma(\CDO(V^*))$ (given by
the action of degree zero vector fields of $V[1]$ on linear functions), this is just the isomorphism $\Gamma(\CDO(V^*))\cong \Gamma(\CDO(V))$ obtained dualizing covariant differential operators, see 
 \cite[Lemma 1.6]{ZZL}.}
 of Lie algebras
\begin{equation}\label{eq:cdos}
\vX_0(V[1])\cong \Gamma(\CDO(V))
,\ X_0\mapsto [X_0,\ ].
\end{equation}
More explicitly, to $X_0$ we assign the covariant differential operator  $\Gamma(V)\to \Gamma(V)$ which, under the identification $\Gamma(V)= \vX_{-1}(V[1])$, maps a degree $-1$ vector field $Y$ to  $[X_0,Y]$.

Hence {a}{} representation of $A$ can be viewed as a map $\phi \colon \Gamma(A)\to \vX_0(V[1])$ satisfying certain properties (namely, the map $\phi$ is an $L_{\infty}$-action of $A$ on $V[1]$). The same {holds}{}  using $V[k]$ instead of $V[1]$, for any non-zero integer $k$.

 \begin{prop}\label{lem:n1}
Let $A\to M$ be a Lie algebroid and $V\to M$  be a representation of $A$. 
Let $\eta\in \Gamma(\wedge^n A^*\otimes V)$ be a Lie algebroid cocycle with values in the representation $V$, for $n\ge 2$. Then $\eta$ induces an $L_{\infty}$-action $F$ of $A$ on $V[n-1]$,
whose only two components are
\begin{align*}
F_1=\phi&\colon \Gamma(A)\to \vX_0(V[n-1])\\
F_n=\eta&\colon \Gamma(\wedge^nA)\to\vX_{1-n}(V[n-1])=\Gamma(V).
\end{align*}
\end{prop}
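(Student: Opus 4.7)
The plan is to construct an explicit homological vector field on $\cN := A[1]\times_M V[n-1]$ and invoke Thm. \ref{charvf}. In local coordinates $x^i$ on $M$, degree-$1$ fiber coordinates $\xi^a$ on $A[1]$, and degree-$(n-1)$ fiber coordinates $\zeta^\alpha$ on $V[n-1]$, set
\[
Q_{tot} \;:=\; Q_{A[1]}|_{\cN} \;+\; X^\nabla \;+\; X^\eta,
\]
where $X^\nabla := \nabla^\alpha_{a\beta}(x)\,\xi^a\zeta^\beta\,\partial_{\zeta^\alpha}$ encodes the representation $\phi$ as a fiberwise-linear vector field on $V[n-1]$ in the spirit of Prop. \ref{prop:inftyrep}, and $X^\eta := \frac{1}{n!}\,\eta^\alpha_{a_1\cdots a_n}(x)\,\xi^{a_1}\cdots\xi^{a_n}\,\partial_{\zeta^\alpha}$ is the vertical constant vector field built from $\eta$. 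One checks that $Q_{tot}$ is globally well-defined, of degree $1$, and projects to $Q_{A[1]}$ under $\cN \to A[1]$.

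I would then verify $[Q_{tot},Q_{tot}]=0$ by splitting into three pieces. First, $[Q_{A[1]}+X^\nabla,\,Q_{A[1]}+X^\nabla]=0$ is equivalent to $\nabla$ being flat, i.e., to $\phi$ being a representation; this is the ``$\eta=0$'' specialization, namely Prop. \ref{prop:inftyrep} applied with $V$ shifted to degree $n-1$. Second, $[X^\eta,X^\eta]=0$ for degree reasons: $X^\eta$ has no $\zeta$-dependence, so the two $\partial_{\zeta^\alpha}$'s annihilate each other's coefficients, and moreover such a bracket would a priori live in $\vX_{2-2n}(V[n-1])$, which is trivial for $n\ge 2$. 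Third, $2[Q_{A[1]}+X^\nabla,\,X^\eta]=0$ is, after a direct coordinate computation, equivalent to the Chevalley--Eilenberg cocycle equation $d^\nabla_A\eta=0$: the bracket with $Q_{A[1]}$ produces the ``$d_A$'' part and the bracket with $X^\nabla$ produces the ``$\nabla$'' part of $d^\nabla_A\eta$.

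By Thm. \ref{charvf}, $Q_{tot}$ corresponds to an $L_\infty$-action $F$ of $A$ on $V[n-1]$. To identify its components I would use formula \eqref{eq:phi}. The fiberwise-linear piece $X^\nabla$ contributes only to $F_1$, reproducing $\phi$ exactly as in Prop. \ref{prop:inftyrep}. The constant piece $X^\eta$, of polynomial bidegree $(n,0)$, contributes only to $F_n$ and yields $\eta(a_1,\ldots,a_n)\in\Gamma(V)=\vX_{1-n}(V[n-1])$ after $n$ iterated brackets with the $\iota_{a_i}$'s. No other $F_k$ can be nonzero, since no summand of $Q_{tot}-Q_{A[1]}|_{\cN}$ has the required polynomial bidegree.

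The main obstacle is the sign bookkeeping in the third piece above, where one must identify the coordinate expression $[Q_{A[1]}+X^\nabla,\,X^\eta]$, whose Koszul signs come from commuting the degree-$(1-n)$ derivations $\partial_{\zeta^\alpha}$ past the degree-$1$ coordinates $\xi^a$, with the standard Chevalley--Eilenberg differential $d^\nabla_A\eta$. The same sign analysis determines the precise normalization (if any) of $X^\eta$ that ensures $F_n(a_1,\ldots,a_n)=\eta(a_1,\ldots,a_n)$ on the nose.
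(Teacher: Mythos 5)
Your proof is correct in outline, but it takes a genuinely different route from the paper's. The paper proves Prop.~\ref{lem:n1} by \emph{direct computation}: it checks conditions i)--iii) of Def.~\ref{def:liac} and then observes that, since $F$ has only two nonzero components and $d_{Q_\cM}=0$, the $L_\infty$-morphism identities \eqref{eq:Aloo} collapse to exactly two equations --- that $F_1$ is a Lie algebroid morphism into $\CDO(V)$, and that $\eta$ is a cocycle. Your approach instead builds the homological vector field $Q_{tot}=Q_{A[1]}|_{\cN}+X^\nabla+X^\eta$ on $\cN$ and invokes Thm.~\ref{charvf}; this is precisely the strategy the paper reserves for the harder generalization to representations up to homotopy (your third bracket computation $[Q_{A[1]}+X^\nabla,X^\eta]=0 \Leftrightarrow d^\nabla_A\eta=0$ is Lemma~\ref{lem:qxzero}, and your whole argument is essentially Prop.~\ref{prop:inftycoc} and Thm.~\ref{cor:finftyfn} specialized to an ordinary representation). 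What each buys: the paper's direct proof is self-contained, avoids the graded-geometric machinery, and produces $F_n=\eta$ on the nose; your proof is more conceptual and generalizes immediately, but at the cost of the sign bookkeeping you flag at the end --- indeed the iterated-bracket formula \eqref{eq:phi} naturally produces $F_n=(-1)^{n(n-1)/2}\eta$ rather than $\eta$ (cf.\ the footnote to Cor.~\ref{cor:inftyrepmorcocycle}, which notes that those formulae do not literally specialize to Prop.~\ref{lem:n1}), so to match the stated components you must rescale $X^\eta$ by $(-1)^{n(n-1)/2}$; this is harmless since $\eta$ is a cocycle iff $-\eta$ is. With that normalization made explicit, your argument is complete.
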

\begin{proof} We need to check that Def. \ref{def:liac} is satisfied. Condition $i)$ is satisfied because $F$ and $\eta$ are $C^{\infty}(M)$-linear, $ii)$ because $A\to \CDO(V)$, being a Lie algebroid morphism, preserves the anchor maps, and iii) because $F_0$ is trivial.
 
We check that $F$ is a (non-curved) $L_{\infty}$-morphism from $\Gamma(A)$ to $\vX(V[n-1])$, using eq. \eqref{eq:Aloo}.  Since $F$ has only two non-trivial components, and  $d_{Q_{\cM}}=0$, eq. \eqref{eq:Aloo}  boils down to $F_1[a_1,a_2]=[F_1(a_1),F_1(a_2)]$ and 
\begin{multline*} 
\sum_{1\le i<j\le n+1} (-1)^{i+j+1}F_{n}\bigl([a_i,a_j],a_1,\dots,\widehat{a_i},\dots,
\widehat{a_j},\dots,a_{n+1}\bigr)\\=\sum_{i=1}^{n+1}(-1)^{i+1}[F_1(a_i),F_n(a_1,\dots,\widehat{a_i},\dots,a_{n+1})].
\end{multline*}
The first equation is satisfied since $A\to \CDO(V)$ is a Lie algebroid morphism. The second is equivalent to $\eta$ being a cocycle, {as can be seen using eq. \eqref{eq:cdos}}.
\end{proof}

The correspondence given by Prop. \ref{lem:n1} is a bijection:
\begin{cor}\label{cor:f1fn}
Let $A\to M$ be a Lie algebroid, $V\to M$ a vector bundle, fix $n\ge 2$. There is a bijection between
\begin{itemize}
\item pairs $(\phi,\eta)$ where $\phi$ is a representation of $A$ on $V$ and $\eta$
is an $n$-cocycle for this representation,
\item $L_{\infty}$-actions of $A$ on $V[n-1]$ for which all components are trivial except the first and $n$-th component.
\end{itemize}
\end{cor}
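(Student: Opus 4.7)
The plan is to exhibit an explicit inverse to the assignment of Prop. \ref{lem:n1}. Given an $L_\infty$-action $F$ of $A$ on $V[n-1]$ whose only nontrivial components are $F_1$ and $F_n$, I would set $\phi := F_1$ viewed as a map $\Gamma(A) \to \Gamma(\CDO(V))$ via the isomorphism \eqref{eq:cdos}, and $\eta := F_n$ viewed as an element of $\Gamma(\wedge^n A^* \otimes V)$ using the $C^\infty(M)$-multilinearity from condition i) of Def. \ref{def:liac} together with the identification $\vX_{1-n}(V[n-1]) = \Gamma(V)$. Injectivity of the original assignment is immediate from this construction, so the work reduces to checking that $\phi$ is a representation and that $\eta$ is an $n$-cocycle for $\phi$.

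For $\phi$: conditions i) and ii) of Def. \ref{def:liac} supply $C^\infty(M)$-linearity and anchor-preservation, while the bracket-preservation $\phi([a_1,a_2]_A) = [\phi(a_1), \phi(a_2)]$ follows from eq. \eqref{eq:Aloo} specialized to $k=2$, since $F_0 = 0$ forces $d_{Q_\cM} = 0$ and $F_2 = 0$ kills the remaining term on the right. For $\eta$: eq. \eqref{eq:Aloo} specialized to $k = n+1$, after using $F_{n-1} = F_{n+1} = 0$ and $d_{Q_\cM} = 0$, collapses into precisely the Lie algebroid cocycle identity $d_A \eta = 0$, exactly as already identified in the proof of Prop. \ref{lem:n1}.

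The remaining task, and the step I expect to be the main obstacle, is to verify that no further constraint on the pair $(\phi,\eta)$ is imposed by eq. \eqref{eq:Aloo} for other values of $k$. The LHS of \eqref{eq:Aloo} involves $F_{k-1}$, which vanishes unless $k - 1 \in \{1, n\}$, i.e.\ $k \in \{2, n+1\}$. On the RHS, $d_{Q_\cM} F_k = 0$ always, while the bracket terms $[F_s, F_{k-s}]$ can only be nonzero when $\{s, k-s\} \subset \{1, n\}$, contributing the further case $k = 2n$ through a $[F_n, F_n]$ term. The key geometric observation that settles this case is that $F_n$ takes values in $\vX_{1-n}(V[n-1])$, which on $V[n-1]$ (with fiber coordinates of degree $n-1$) consists entirely of constant vertical vector fields of the form $f(x)\,\partial$ with $f \in C^\infty(M)$; any two such vector fields commute, so $[F_n, F_n] = 0$ automatically. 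Hence the $k = 2n$ equation is trivially satisfied, the inverse assignment is well-defined, and the bijection is established.
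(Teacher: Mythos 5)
Your proposal is correct and follows essentially the same route as the paper: the paper's proof also sets $\phi:=F_1$ and $\eta:=F_n$ via the identifications $\vX_0(V[n-1])\cong\Gamma(\CDO(V))$ and $\vX_{1-n}(V[n-1])=\Gamma(V)$ and reads off the representation and cocycle conditions from the two nontrivial instances of eq.~\eqref{eq:Aloo} already isolated in the proof of Prop.~\ref{lem:n1}. Your explicit check that the $k=2n$ equation is vacuous because $[F_n,F_n]$ lands in $\vX_{2-2n}(V[n-1])=0$ is a point the paper leaves implicit, and it is a worthwhile addition.
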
 
\begin{proof}
The $L_{\infty}$-action is given by {Prop. \ref{lem:n1}}. 
Vice versa, given an $L_{\infty}$-action $F$, notice that $F_1$ takes values in $\vX_0(V[n-1])\cong \Gamma(\CDO(V))$ and $F_n$ takes values in $\vX_{1-n}(V[n-1])=\Gamma(V)$.
The proof of {Prop. \ref{lem:n1}}{} shows that
$F_1$ is a Lie algebroid morphism over $\id_M$, \emph{i.e.} a representation, and that $F_n$ is an $n$-cocycle.
\end{proof}

\begin{ex}[{Closed differential forms}]
{We have the following statement, obtained either by a simple computation or by 
applying Prop. \ref{lem:n1} and Cor. \ref{cor:f1fn} to the Lie algebroid $TM$ with the standard representation on $\RR\times M$: let $\eta\in \Omega^n(M)$. Then
 $\eta$ is closed if{f} $F_1=\id\colon \vX(M)\to \vX(M)$ and $ F_n=\eta\colon \Gamma(\wedge^n TM)\to C^{\infty}(M)$ are the components of an $L_{\infty}$-morphism from $\vX(M)$ to the graded Lie algebra $\vX(M)\oplus C^{\infty}(M)[n-1]$ with bracket $[(X,f),(Y,g)]=([X,Y],X(g)-Y(f))$.
}
\end{ex}

 \subsection{Lie algebroid cocycles with values in a representation  up to homotopy }\label{sec:cocyclesinty} 
 
 \label{sec:inftyrep}
  Now let $V$ be a representation  up to homotopy of $A$ (in particular, $V$ is a graded vector bundle {of}{} {finite rank}).   {Recall from \S \ref{infrep} that this means we are given}{} an operator $D$ on $\Omega(A,V):=\Gamma(\wedge A^*\otimes V)$ {that can be}{} encoded by  {either}{} a homological vector field on $A[1]\times_M V[1]$, or an $L_{\infty}$-action of {$A$}{} on $V[1]$ (by Lemma \ref{lem:inftyrep} and Prop. \ref{prop:inftyrep}, respectively). In this subsection we enhance this  by including cocycles into the picture.

Fix $n\ge 2$. {For the sake of simplicity, we assume that $V$ is concentrated in degrees $\le n-2$}. Let $\eta$ be an \emph{$n$-cocycle} on $A$ with values in the representation up to homotopy, \emph{i.e.} an   element of $\Omega(A,V)$ of   degree $n$ with $D\eta=0$.
The same   {operator}{} $D$   makes $V[n-1]$ {as well}{} into an  
representation up to homotopy of $A$, {for which} $\eta$ becomes an element of degree $1$, since $\Gamma(\wedge A^*\otimes V)_n=\Gamma(\wedge A^*\otimes V[n-1])_1$. {Denote by $Q_D$ the corresponding  homological vector field on the N-manifold $A[1]\times_M V[n-1]$, as in Lemma \ref{lem:inftyrep}.}
 
  We interpret $\eta$ as a vector field $\iota_{\eta}$ on $A[1]\times_M V[n-1]$, using
$$\eta\in \Gamma(\wedge A^*\otimes V[n-1])_1=\vX_{vert, const}(A[1]\times_M V[n-1])_1 \ni \iota_{\eta},$$ where the latter space denotes the space of degree $1$ vector fields  which are vertical (\emph{i.e.} tangent to the second factor) and  constant on each fiber. The equation  \eqref{eq:relDQ} immediately implies the following.
 \begin{lemma}\label{lem:qxzero}
$D\eta=0$ if{f} $[Q_D,\iota_{\eta}]=0$.
\end{lemma}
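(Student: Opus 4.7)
The claim follows essentially immediately from the identity \eqref{eq:relDQ}, and the main task is just to verify that the setup carries over from $V[1]$ to $V[n-1]$ and that the map $\eta \mapsto \iota_\eta$ is injective on the relevant component.

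The plan is as follows. First, I would observe that the construction of Lemma \ref{lem:inftyrep} applies verbatim to $V[n-1]$ in place of $V[1]$: the same operator $D$ defines a representation up to homotopy of $A$ on $V[n-1]$, and the associated homological vector field on $\cN':=A[1]\times_M V[n-1]$ is exactly the $Q_D$ in the statement. In particular, by eq.~\eqref{eq:relDQ} applied to this shifted setting, for every $\zeta \in \Omega(A,V[n-1]) \cong \Gamma(S(A[1]^*))\otimes_M \Gamma(V[n-1])$ we have
\begin{equation*}
\iota_{D\zeta} = [Q_D, \iota_\zeta].
\end{equation*}
Applying this identity with $\zeta = \eta$ (where now $\eta$ is of degree $1$ in $\Omega(A,V[n-1])$) yields $\iota_{D\eta} = [Q_D, \iota_\eta]$.

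Next, I would note that the correspondence $\zeta \mapsto \iota_\zeta$ sending an element of $\Omega(A,V[n-1])$ to the corresponding vertical, fiberwise constant vector field on $\cN'$ is injective (indeed, it identifies $\Omega(A,V[n-1])_1$ with $\vX_{\mathrm{vert,const}}(\cN')_1$). Therefore $\iota_{D\eta} = 0$ if and only if $D\eta = 0$, which combined with the displayed equation above gives the stated equivalence.

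The only real thing to check is that the degree bookkeeping works, namely that $\iota_\eta$ really sits in degree $1$ when $\eta$ has total degree $n$ in $\Omega(A,V)$, but this is immediate from the shift $V \rightsquigarrow V[n-1]$ together with the assumption that $V$ is concentrated in degrees $\le n-2$ (so that $V[n-1]$ has components in degrees $\le -1$ and $\cN'$ is a bona fide N-manifold to which Lemma \ref{lem:inftyrep} applies). There is no real obstacle here; the statement is essentially a reformulation of \eqref{eq:relDQ}.
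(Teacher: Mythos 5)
Your proposal is correct and follows the same route as the paper, which simply observes that the identity \eqref{eq:relDQ} (applied with $V$ replaced by $V[n-1]$) immediately gives $\iota_{D\eta}=[Q_D,\iota_\eta]$, whence the equivalence follows from injectivity of $\zeta\mapsto\iota_\zeta$. Your additional remarks on the degree bookkeeping and on why Lemma \ref{lem:inftyrep} applies to the shifted bundle are sound but just make explicit what the paper leaves implicit.
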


\begin{ex}\label{ex:closedfQ}
{{By}{} applying Lemma \ref{lem:qxzero} {to the standard representation of $TM$ on $\RR\times M$}{}, {we obtain the following well-known statement}. The assignment $\eta\mapsto  \iota_{\eta}$ is a bijection between closed forms $\eta\in \Omega^n(M)$ and degree 1 vector fields $\iota_{\eta}$ on $T[1]M\times \RR[n-1]$ which are vertical and constant along the fibers, and  so that $Q_{dR}+\iota_{\eta}$ is a homological vector field. (This {also}{} follows from Prop. \ref{prop:alphaaction}, by taking $\g[1]=\RR[n-1]$ with the trivial $L_{\infty}$-algebra structure).}
\end{ex}

For Lie algebroid representations, we obtained Prop.  \ref{lem:n1} by a direct computation. In the   setting of representations  up to homotopy it would be hard to obtain the analogue of {Prop. \ref{lem:n1}}{} directly,  but  using Thm. \ref{charvf}
{one can obtain it easily}:
\begin{prop}\label{prop:inftycoc}
Let $V$ be a representation  up to homotopy of $A$, let $n \ge 2$, {and assume that $V$ is concentrated in degrees $\le n-2$}. Every $n$-cocycle $\eta$ gives rise to an $L_{\infty}$-action  of $A$ on $V[n-1]$.
\end{prop}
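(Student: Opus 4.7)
The plan is to apply Thm.~\ref{charvf} by producing an explicit homological vector field on $\cN := A[1]\times_M V[n-1]$ whose projection to $A[1]$ is $Q_{A[1]}$; the $L_\infty$-action provided by the theorem will be the desired one.

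Both ingredients are already at our disposal in this subsection. First, the representation up to homotopy $D$ yields, via Lemma~\ref{lem:inftyrep} (applied to $V[n-2]$, which is concentrated in degrees $\le 0$), the fiberwise linear homological vector field $Q_D$ on $\cN$ projecting to $Q_{A[1]}$. Second, the cocycle $\eta$, reinterpreted as a degree $1$ element of $\Gamma(\wedge A^*\otimes V[n-1])$, corresponds to the degree $1$ vertical, fiberwise constant vector field $\iota_\eta$ on $\cN$ introduced just before Lemma~\ref{lem:qxzero}. I then propose
\begin{equation*}
Q_{tot} := Q_D + \iota_\eta.
\end{equation*}
Since $\iota_\eta$ is vertical with respect to $\cN\to A[1]$, the sum $Q_{tot}$ continues to project to $Q_{A[1]}$.

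What remains is to verify that $Q_{tot}$ is homological. Expanding,
\begin{equation*}
\tfrac{1}{2}[Q_{tot},Q_{tot}] \;=\; \tfrac{1}{2}[Q_D,Q_D] \;+\; [Q_D,\iota_\eta] \;+\; \tfrac{1}{2}[\iota_\eta,\iota_\eta].
\end{equation*}
The first term vanishes because $Q_D$ is homological, and the second vanishes by Lemma~\ref{lem:qxzero} applied to the cocycle condition $D\eta = 0$. For the third term, although $\iota_\eta$ has odd degree so that $[\iota_\eta,\iota_\eta] = 2\,\iota_\eta\circ\iota_\eta$, the operator $\iota_\eta\circ\iota_\eta$ is itself a derivation whose vanishing can be checked on generators: $\iota_\eta$ annihilates both $C^\infty(M)$ and the pullbacks of $C(A[1])$, and its action on a fiber coordinate of $V[n-1]$ produces an element of $C(A[1])$ which is then killed by a second application of $\iota_\eta$.

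The main conceptual step has been the passage from cocycles to vertical constant vector fields (already packaged in Lemma~\ref{lem:qxzero}); the only genuine subtlety is the bookkeeping needed to confirm $[\iota_\eta,\iota_\eta]=0$ despite $\iota_\eta$ being odd. With $Q_{tot}$ shown to be a homological vector field on $\cN$ projecting to $Q_{A[1]}$, Thm.~\ref{charvf} delivers the desired $L_\infty$-action of $A$ on $V[n-1]$.
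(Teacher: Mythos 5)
Your proposal is correct and follows essentially the same route as the paper: form $Q_{tot}=Q_D+\iota_\eta$, use Lemma \ref{lem:qxzero} to kill the cross term, observe that $\iota_\eta$ commutes with itself and is vertical, and invoke Thm. \ref{charvf}. Your explicit check on generators that $[\iota_\eta,\iota_\eta]=0$ is a welcome elaboration of what the paper dismisses as ``clear in coordinates.''
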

 \begin{proof}
Lemma \ref{lem:qxzero} states that $[Q_D,\iota_{\eta}]=0$. Since $ \iota_{\eta}$ commutes with itself (this is clear in coordinates), it follows that $$Q_D+ \iota_{\eta}$$ is a homological vector field on $A[1]\times_M V[n-1]$. This vector field
maps to ${Q_{A[1]}}$ under the projection to $A[1]$,
since $\iota_{\eta}$ is vertical.
Applying Thm. \ref{charvf} finishes the proof.
\end{proof}
 
 The following theorem {states} that 
the correspondence given by Prop. \ref{prop:inftycoc} is a bijection, thus extending\footnote{Modulo the restrictions we impose on the degrees of $V$.} the results of 
Prop. \ref{prop:inftyrep} 
and Cor. \ref{cor:f1fn}.

\begin{thm}\label{cor:finftyfn}

Let $A\to M$ be a Lie algebroid, $V\to M$ a graded vector bundle, fix $n\ge 2$,  {and assume that $V$ is concentrated in degrees $\le n-2$}. There is a bijection between
\begin{itemize}
\item pairs $(D,\eta)$ consisting of a  representation up to homotopy  of $A$ on $V$ and   an $n$-cocycle for this representation,
\item $L_{\infty}$-actions of $A$ on $V[n-1]$ by vector fields on $V[n-1]$ which are sums of fiber-wise linear ones  and   ones {which are  vertical and} constant along the fibers.
\end{itemize}
\end{thm}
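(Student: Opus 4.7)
The plan is to pass to homological vector fields on $\cN := A[1]\times_M V[n-1]$ via Thm.~\ref{charvf}, and then show that the condition $[Q_{tot},Q_{tot}]=0$ splits cleanly into one equation giving a representation up to homotopy and one giving the cocycle condition. The forward direction is essentially Prop.~\ref{prop:inftycoc}: given a pair $(D,\eta)$, set $Q_{tot} := Q_D + \iota_\eta$, which is homological and projects to $Q_{A[1]}$, so by Thm.~\ref{charvf} it defines an $L_\infty$-action on $V[n-1]$; by construction $Q_D$ is fiberwise linear in the fibers of $\cN\to A[1]$ while $\iota_\eta$ is vertical constant, so each component of the associated $L_\infty$-action is a sum of a fiberwise linear and a vertical constant vector field on $V[n-1]$.

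For the reverse direction, start with an $L_\infty$-action of the prescribed type and apply Thm.~\ref{charvf} to obtain a homological vector field $Q_{tot}=Q_{A[1]}|_\cN+X$ with $X\in\Gamma(S(A[1]^*))\otimesM\vX(V[n-1])$. By Lemma~\ref{lem:bijeasy} the hypothesis on the action translates to a splitting $X=X_{lin}+X_{const}$, where $X_{lin}$ has fiberwise linear and $X_{const}$ has vertical constant vector field values on $V[n-1]$. Introduce the polynomial bidegree in the fiber coordinates of $V[n-1]$: under this grading $Q_{A[1]}|_\cN$ and $X_{lin}$ both have net shift $0$ (they preserve fiberwise linear functions), while $X_{const}$ has net shift $-1$. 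Since the coefficients of $X_{const}$ are independent of the $V[n-1]$-fiber coordinates, the bracket $[X_{const},X_{const}]$ vanishes. Expanding $[Q_{tot},Q_{tot}]=0$ and separating by net shift gives the two equations
\begin{equation*}
[Q_{A[1]}|_\cN + X_{lin},\; Q_{A[1]}|_\cN + X_{lin}] = 0,\qquad [Q_{A[1]}|_\cN + X_{lin},\; X_{const}] = 0.
\end{equation*}

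The first equation says $Q_D := Q_{A[1]}|_\cN + X_{lin}$ is homological, fiberwise linear on $\cN\to A[1]$, and projects to $Q_{A[1]}$; by Lemma~\ref{lem:inftyrep} (applied after a degree shift, so that $V[n-1] = (V[n-2])[1]$) it corresponds to a representation up to homotopy $D$ of $A$ on $V$. The assumption that $V$ is concentrated in degrees $\le n-2$ ensures that a degree $1$ vertical constant vector field in $\Gamma(S(A[1]^*))\otimesM\vX(V[n-1])$ is precisely $\iota_\eta$ for a uniquely determined element $\eta$ of total degree $n$ in $\Omega(A,V)=\Gamma(\wedge A^*\otimes V)$; the second equation $[Q_D,\iota_\eta]=0$ is then exactly the criterion $D\eta=0$ of Lemma~\ref{lem:qxzero}, so $\eta$ is an $n$-cocycle. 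The two constructions $(D,\eta)\mapsto Q_{tot}$ and $Q_{tot}\mapsto(D,\eta)$ are inverse to each other by construction.

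The main obstacle, such as it is, lies in the bookkeeping of the bidegree decomposition: one has to check that the polynomial bidegree in $V[n-1]$-fiber coordinates is preserved by the Lie bracket in the precise way claimed, so that the single homological equation genuinely decouples into two independent equations. The vanishing $[X_{const},X_{const}]=0$ is the key point that makes the splitting work, and it holds because $X_{const}$'s coefficients are pulled back from $A[1]$. The degree hypothesis on $V$ is used only to match vertical constant pieces with $n$-cochains without lower-form-degree contaminations.
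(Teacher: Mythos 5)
Your proposal is correct and follows essentially the same route as the paper: the forward direction is Prop.~\ref{prop:inftycoc} plus the observation that $Q_D$ is fiberwise linear and $\iota_\eta$ vertical constant, and the converse splits $[Q_{tot},Q_{tot}]=0$ into $[Q_D,Q_D]=0$ and $[Q_D,\iota_\eta]=0$ exactly as in the paper, then invokes Lemma~\ref{lem:inftyrep} and Lemma~\ref{lem:qxzero}. The only cosmetic difference is that you justify the decoupling via a polynomial bidegree in the $V[n-1]$-fiber coordinates, whereas the paper argues directly that $[Q,Q]$ is fiberwise linear while $2[Q,Y]$ is vertical constant (with $[Y,Y]=0$), which is the same bookkeeping.
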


\begin{proof}
{
A pair $(D,\eta)$ induces a homological vector field $Q_D+\iota_{\eta}$ on $A[1]\times_M V[n-1]$ and therefore an $L_{\infty}$-action of $A$ on $V[n-1]$, as explained in  
Prop. \ref{prop:inftycoc} and its proof. With respect to the projection $A[1]\times_M V[n-1]\to A[1]$, the vector field $Q_D$ is fiber-wise linear and $\iota_{\eta}$ is tangent to the fibers and constant along the fibers. This means that the  $L_{\infty}$-action obtained is as described in the second item.
}

{For the converse statement, we argue in a way similar to Prop. \ref{cor:gap}. Under the bijection given in
Thm. \ref{charvf}, 
an $L_{\infty}$-action of $A$ on $V[n-1]$ as in the second item
corresponds to a homological vector field which is the sum of a fiber-wise linear ones (let us call it $Q$) and  a vertical one constant along the fibers (let us call it $Y$).
Clearly $[Q+Y,Q+Y]=[Q,Q]+2[Q,Y]$. Now $[Q,Q]$ is a fiber-wise linear vector field  and $[Q,Y]$ 
is vertical  constant along the fibers, so $Q+Y$ being  a homological vector field implies that both have to vanish. The equation $[Q,Q]=0$ implies that $Q=Q_D$ for some representation  up to homotopy  $D$ of $A$ on $V$. By Lemma \ref{lem:qxzero}, $[Q,Y]=0$ implies that $Y=\iota_{\eta}$ for some $n$-cocycle $\eta$.
}
\end{proof}

 To conclude this section, we spell out in classical terms the $L_{\infty}$-morphism obtained from Prop. \ref{prop:inftycoc}. As usual we denote by  $\partial,\, \nabla=:\omega_1,\, \omega_2,\dots$ the components of the representation  up to homotopy. The proof of the following corollary is similar to the one of Cor. \ref{cor:inftyrepmor} and will be omitted.

 \begin{cor}\label{cor:inftyrepmorcocycle}
Consider a representation up to homotopy and an $n$-cocycle $\eta$ with components $\eta_k\in \Gamma(\wedge^k A^*\otimes V_{n-k})$  as in Prop. \ref{prop:inftycoc} (notice that necessarily $\eta_0$ and $\eta_1$ vanish).
Then the $L_{\infty}$-action from Prop. \ref{prop:inftycoc} corresponds via Prop. \ref{prop:liacQ}
 to an $L_{\infty}$-morphism
$$F\colon \bigl(\Gamma(A),[\ ,\ ]_A\bigr) \rightsquigarrow \Gamma(V[n-1])\oplus\Gamma(\CDO(V[n-1]))$$
whose components\footnote{Notice that  changing at will the sign of the summand $\eta$ in $F_n$ one still obtains an $L_{\infty}$-morphism, since $\eta$ is a cocycle if{f} $-\eta$ is. 
This explains why it is not an issue that the present formulae do not specialize to those of {Prop. \ref{lem:n1}}{}.}
 for all $k\ge 1$ are
$$ F_k= (-1)^{\frac{k(k-1)}{2}} (\omega_k +\eta_k).$$
Here the r.h.s. is a DGLA with bracket given by the graded commutator  on $\Gamma(\CDO(V[n-1]))$ and  by $[Y,v]={Y(v)}$ for all $v\in \Gamma(V[n-1])$ and $Y\in \Gamma(\CDO(V[n-1]))$, while the differential 
 acts as $v\mapsto -{\partial v}$
and $Y\mapsto -[\partial,Y]$.
 \end{cor}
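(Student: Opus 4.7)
The plan is to follow closely the proof of Cor.~\ref{cor:inftyrepmor}, exploiting the fact that the assignment $X\mapsto \phi_X$ of Lemma~\ref{lem:bijeasy} is $\RR$-linear in $X$. By Prop.~\ref{prop:inftycoc}, the homological vector field on $\cN := A[1]\times_M V[n-1]$ corresponding to the pair $(D,\eta)$ is $Q_{A[1]}|_{\cN}+X$ with $X = X_D+\iota_\eta$, where $X_D\in\Gamma(S(A[1]^*))\otimesM \vX_{lin}(V[n-1])$ is the fiber-wise linear vector field encoding the representation up to homotopy $D$ (as in the proof of Lemma~\ref{lem:inftyrep}), and $\iota_\eta$ is the vertical, constant-on-fibers vector field determined by the cocycle. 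Hence $\phi_X = \phi_{X_D}+\phi_{\iota_\eta}$, and under d\'ecalage (which introduces no extra signs since $\Gamma(A)$ is concentrated in degree zero) the resulting $L_\infty$-morphism $F$ splits accordingly into the sum of two contributions with values respectively in $\Gamma(\CDO(V[n-1]))$ and $\Gamma(V[n-1])$.

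The first contribution is handled by applying Cor.~\ref{cor:inftyrepmor} to $D$ viewed as a representation up to homotopy on the shifted bundle $V[n-1]$ (its components $\omega_k$ are unaltered by the shift); this yields $(-1)^{k(k-1)/2}\omega_k$ with values in $\Gamma(\CDO(V[n-1]))$. For the second, decompose $\eta = \sum_{m\ge 2}\eta_m$ according to form-degree; then by Lemma~\ref{lem:bijeasy},
$$
(\phi_{\iota_\eta})_k(a_1,\dots,a_k) \;=\; \bigl[[\cdots[\iota_\eta,\iota_{a_1}],\dots],\iota_{a_k}\bigr]\bigl|_{V[n-1]}.
$$
Iterated contraction with the $\iota_{a_i}$'s kills every summand except $\iota_{\eta_k}$, and a direct unwinding of the nested brackets via the explicit formula of Lemma~\ref{lem:bijeasy} reduces the right-hand side to $(-1)^{k(k-1)/2}\iota_{\eta_k(a_1,\dots,a_k)}$, with the sign accumulating through exactly the same graded-antisymmetry rearrangement used in the proof of Cor.~\ref{cor:inftyrepmor}. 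Under the identification of vertical, constant-on-fibers vector fields on $V[n-1]$ with $\Gamma(V[n-1])$ (via $\iota_v\leftrightarrow v$), this contributes $(-1)^{k(k-1)/2}\eta_k$.

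Finally, one verifies that the DGLA structure on $\Gamma(V[n-1])\oplus\Gamma(\CDO(V[n-1]))$ stated in the corollary coincides with the one induced from $\bigl(\vX(V[n-1]),-[Q_{V[n-1]},\,\cdot\,],[\,\cdot\,,\,\cdot\,]\bigr)$ on the sub-DGLA of fiber-wise linear and vertical constant-on-fibers vector fields, using \eqref{eq:isocdo}, the identification $\iota_v\leftrightarrow v$, and the fact that $Q_{V[n-1]}$ corresponds to $\partial$ under \eqref{eq:isocdo}. The main bookkeeping task is sign-tracking; since the relevant signs arise by the same graded-antisymmetry mechanism as in Cor.~\ref{cor:inftyrepmor} and can be computed separately for the two summands of $X$, no genuinely new difficulty appears beyond a careful verification that the two rearrangements produce the same overall factor $(-1)^{k(k-1)/2}$.
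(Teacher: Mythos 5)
Your proposal is correct and follows precisely the route the paper intends: the paper omits the proof of this corollary, stating only that it is ``similar to the one of Cor.~\ref{cor:inftyrepmor}'', and your argument --- splitting $X = X_D + \iota_\eta$, using $\RR$-linearity of $X\mapsto\phi_X$, extracting $\eta_k$ from the $k$-fold contraction, and tracking the d\'ecalage/reordering signs --- is exactly that similar argument carried out. Note also that the footnote to the statement confirms that any residual sign ambiguity in the $\eta_k$ summand would be immaterial, so your sign bookkeeping for the cocycle part carries no real risk.
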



\section{Maurer-Cartan elements}\label{sec:vs}

We first make a general remark. 
Let $M$ be a manifold and $\h$ an $L_{\infty}$-algebra. Since the differential forms 
$\Omega(M)$ are a differential graded commutative algebra, on $\h\otimes \Omega(M)$ there is an induced $L_{\infty}$-algebra structure,
with differential $v\otimes \omega\mapsto [v]\otimes \omega+(-1)^{|v|}v\otimes ({-}d)\omega$ and higher brackets obtained by extending those of $\h$ by the wedge product \cite[\S 4]{GetzlerAnnals}.
 {For later convenience,  we consider $\Omega(M)$  endowed with the negative of the de Rham differential}. 
 In \cite{GetzlerAnnals}  Getzler consider  Maurer-Cartan elements\footnote{They can be regarded as $Q$-manifold morphisms from $(T[1]M,Q_{dR})$ to $(\h[1],Q_{\h[1]})$, as A. Kotov pointed out to us.}
 of $\h\otimes \Omega(M)$; in the special case that $\h$ is a Lie algebra, they correspond to flat connections on the trivial $H$-bundle over $M$, where $H$ is a Lie group integrating $\h$.\\
 
{In this section we focus on 
a special case. Let $M$ be a manifold and let $\g$ be an $L_{\infty}$-algebra   concentrated in degrees $\le 0$. We define $\h$ to be the DGLA 
\begin{equation}\label{eq:h}
\h=\bigl(\vX(\g[1]),  
{-}[Q_{\g[1]}, \ ], [\ ,\ ]\bigr).
\end{equation}
We will  interpret  Maurer-Cartan elements of $\h\otimes \Omega^{\ge 1}(M)$ as 
 a certain $L_{\infty}$-actions. {Notice that $\h\otimes \Omega^{\ge 1}(M)$ is a DGLA, and that, by  denoting  $D$ and $\textbf{[}\cdot,\cdot\textbf{]}$ its differential and graded Lie bracket, the Maurer-Cartan equation reads $D\alpha-\frac{1}{2}\textbf{[}\alpha,\alpha\textbf{]}=0$, where the minus signs comes from the d\'ecalage isomorphisms.}
 It is easily seen that all the results of this section hold replacing\footnote{Therefore replacing $\Omega(M)$ with $\Gamma(\wedge A^*)$.}
 $TM$ with any Lie algebroid $A$ over $M$.\\}

{
First consider the product manifold $T[1]M\times \g[1]$. 
\begin{lemma}\label{prop:productQ}
 Let\footnote{This lemma holds even allowing $\alpha$ to lie in $\vX(\g[1])\otimes\Omega(M)$.}
$\alpha\in  (\vX(\g[1])\otimes \Omega^{\ge 1}(M))_1$, and interpret $\alpha$ as a  
(vertical) vector field on $T[1]M\times \g[1]$. Then $\alpha$ 
is a Maurer-Cartan element {of the DGLA $\vX(\g[1])\otimes \Omega^{\ge 1}(M)$} if{f} $Q_{dR}+\alpha+Q_{\g[1]}$ is a homological vector field. 
\end{lemma}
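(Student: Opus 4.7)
The plan is to expand the self-bracket of the vector field
$Q_{tot}:=Q_{dR}+\alpha+Q_{\g[1]}$
on $T[1]M\times \g[1]$ and match each summand with the corresponding piece of the Maurer--Cartan expression $D\alpha-\tfrac{1}{2}\textbf{[}\alpha,\alpha\textbf{]}$.

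First I would note that $Q_{dR}$ and $Q_{\g[1]}$ are pulled back from the two factors of the product, hence commute as vector fields on $T[1]M\times \g[1]$; together with $[Q_{dR},Q_{dR}]=0$ and $[Q_{\g[1]},Q_{\g[1]}]=0$, this yields
\[
\tfrac{1}{2}[Q_{tot},Q_{tot}]\;=\;[Q_{dR},\alpha]\;+\;[Q_{\g[1]},\alpha]\;+\;\tfrac{1}{2}[\alpha,\alpha].
\]
Therefore $Q_{tot}$ is homological if and only if this expression vanishes, and the task reduces to identifying each summand with the corresponding term in the MC equation.

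Next, I would write $\alpha=\sum_i v_i\otimes \omega_i$ with $v_i\in \vX(\g[1])$ and $\omega_i\in \Omega^{\ge 1}(M)=C_{\ge 1}(T[1]M)$, and use the identification of $\vX(\g[1])\otimes\Omega(M)$ with the vertical vector fields on $T[1]M\times \g[1]$ that are constant along the $\g[1]$-fibers. Under this identification:
\begin{itemize}
\item $[Q_{\g[1]},v_i\otimes \omega_i]=[Q_{\g[1]},v_i]\otimes \omega_i$, since $Q_{\g[1]}$ does not touch the pullback coordinates $\omega_i$. Up to the sign in the convention $d_{\h}=-[Q_{\g[1]},\cdot]$ from \eqref{eq:h}, this reproduces the $d_{\h}\otimes 1$ contribution to $D\alpha$.
\item $[Q_{dR},v_i\otimes \omega_i]=(-1)^{|v_i|}\,v_i\otimes d\omega_i$, since $Q_{dR}$ annihilates $v_i$ (which is vertical and $T[1]M$-independent) and acts on $\omega_i$ as the de Rham differential. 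Combined with the convention that $\Omega(M)$ carries $-d$, this reproduces the $1\otimes d_A$ contribution to $D\alpha$.
\item $[\alpha,\alpha]$ as a bracket of vector fields equals $\textbf{[}\alpha,\alpha\textbf{]}$, because for vertical, fiber-constant vector fields the vector field bracket on the product is exactly the tensor-product graded Lie bracket $[v\otimes\omega,w\otimes\tau]=(-1)^{|w||\omega|}[v,w]\otimes(\omega\wedge\tau)$.
\end{itemize}
Summing the three identifications shows that the vanishing of $\tfrac{1}{2}[Q_{tot},Q_{tot}]$ is equivalent to the Maurer--Cartan equation $D\alpha-\tfrac{1}{2}\textbf{[}\alpha,\alpha\textbf{]}=0$.

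The only real obstacle is sign bookkeeping: reconciling the two minus signs built into the paper's conventions ($d_{\h}=-[Q_{\g[1]},\cdot]$ and $d_{A}=-d_{dR}$) with the d\'ecalage sign in the MC equation and the signs produced when commuting a vector field on one factor past one on the other. A short local computation in coordinates $(x^a,\xi^a,\eta^i)$ on $T[1]M\times \g[1]$, writing $Q_{dR}=\xi^a\partial_{x^a}$ and $\alpha$ as a polynomial in $\xi$ with values in $\vX(\g[1])$, would verify all signs unambiguously and complete the proof.
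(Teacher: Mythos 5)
Your proof is correct and follows essentially the same route as the paper's: the paper likewise writes $\alpha$ as a sum of terms $v\otimes\omega$, identifies the Maurer--Cartan equation with $[Q_{\g[1]},\alpha]+[Q_{dR},\alpha]+\frac{1}{2}[\alpha,\alpha]=0$, and observes that this is exactly the vanishing of $\frac{1}{2}[Q_{tot},Q_{tot}]$. Your version merely spells out the term-by-term identification (and the sign conventions) that the paper leaves implicit.
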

\begin{proof}
Writing $\alpha$ as a sum of terms $v\otimes \omega$ for 
$v\in \vX(\g[1])$ and $\omega\in \Omega^{\ge 1}(M)$, we see that the Maurer-Cartan equation for $\alpha$ amounts to
$$[Q_{\g[1]},\alpha]+[Q_{dR},\alpha]+\frac{1}{2}[\alpha,\alpha]=0,$$
which in turn is equivalent to the fact that  $Q_{dR}+\alpha+Q_{\g[1]}$ commutes with itself.
\end{proof}
}

{Now let  $\g_M:=\g\times M$ (a trivial vector bundle over $M$), and denote by $Q_{\g_M[1]}$ the vector field on $\g_{M}[1]$ obtained extending trivially $Q_{\g[1]}$.
Notice that it belongs to $\vX^{vert}(\g_M[1])$, the vector fields which are vertical w.r.t. the projection to $M$.}
{ Lemma \ref{prop:productQ} immediately gives:
\begin{lemma}\label{lem:ge1}
There is a bijection between Maurer-Cartan elements of the DGLA $\vX(\g[1])\otimes \Omega^{\ge 1}(M)$ and homological vector fields $Q_{pr}$ on $T[1]M\times \g[1]$ such that
$$(\g_{{M}}[1],Q_{\g_{{M}}[1]})\to (T[1]M\times \g[1], Q) \to (T[1]M, Q_{dR})$$
is a   sequence of $Q$-manifolds. It is given by $\alpha\mapsto Q_{dR}+\alpha+Q_{\g[1]}$.
\end{lemma}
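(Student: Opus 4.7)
The plan is to use Lemma \ref{prop:productQ} in both directions, and reduce the lemma to checking that the $\Omega^{\ge 1}(M)$ condition on $\alpha$ corresponds precisely to the extra Q-morphism condition on the inclusion $\g_M[1] \hookrightarrow T[1]M \times \g[1]$.

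For the forward direction, given a Maurer-Cartan element $\alpha \in (\vX(\g[1])\otimes \Omega^{\ge 1}(M))_1$, Lemma \ref{prop:productQ} already produces the homological vector field $Q := Q_{dR} + \alpha + Q_{\g[1]}$ on $T[1]M\times\g[1]$, so it suffices to verify the two Q-morphism conditions. The projection to $(T[1]M,Q_{dR})$ is a Q-morphism because $\alpha$ and $Q_{\g[1]}$ are both vertical with respect to it. The inclusion of $\g_M[1]=M\times\g[1]$ as $\{\xi=0\}\times\g[1]$ is a Q-morphism because, in local fiber coordinates $\xi$ on $T[1]M$, both $Q_{dR}$ and every term of $\alpha$ vanish when we set $\xi=0$ (for $\alpha$ this uses the positive form degree assumption), while $Q_{\g[1]}$ restricts to $Q_{\g_M[1]}$ by the very definition of the latter as the trivial extension.

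For the converse, suppose $Q$ is a homological vector field on $T[1]M\times\g[1]$ satisfying the stated Q-morphism conditions. The projection condition forces $V := Q-Q_{dR}$ to be vertical with respect to $T[1]M\times\g[1]\to T[1]M$. Decomposing $V$ by polynomial degree in the $\xi$-coordinates yields
\begin{equation*}
V=V_0+\alpha, \qquad V_0\in C^\infty(M)\otimes\vX(\g[1]),\quad \alpha\in\Omega^{\ge 1}(M)\otimes \vX(\g[1]).
\end{equation*}
Restriction to $\{\xi=0\}\times\g[1]=\g_M[1]$ kills $Q_{dR}$ and every term of $\alpha$, so the inclusion being a Q-morphism forces $V_0=Q_{\g_M[1]}=1\otimes Q_{\g[1]}$, where the last equality uses the triviality of $\g_M$. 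Hence $Q=Q_{dR}+Q_{\g[1]}+\alpha$ with $\alpha\in\vX(\g[1])\otimes\Omega^{\ge 1}(M)$, and a second application of Lemma \ref{prop:productQ} translates the equation $[Q,Q]=0$ into the Maurer-Cartan equation for $\alpha$. The only mildly delicate point is the decomposition of a vertical vector field by $\xi$-degree and the identification of the restriction to $\{\xi=0\}$ with $V_0$, but this is transparent in local coordinates, so I expect no real obstacle.
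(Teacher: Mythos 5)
Your proof is correct and takes essentially the same route as the paper: the paper's own argument likewise observes that the two $Q$-morphism conditions force $Q_{pr}$ to have the form $Q_{dR}+\alpha+Q_{\g[1]}$ with $\alpha\in\vX(\g[1])\otimes\Omega^{\ge 1}(M)$ and then invokes Lemma \ref{prop:productQ} to convert $[Q_{pr},Q_{pr}]=0$ into the Maurer--Cartan equation. You merely spell out in coordinates the $\xi$-degree decomposition and the verification of the two $Q$-morphism conditions, which the paper leaves implicit.
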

\begin{proof}
Let $Q_{pr}$ be a degree one vector field  on $T[1]M\times \g[1]$ such that the above is a  sequence of N-manifolds with degree one vector fields. Then $Q_{pr}$ is of the form $Q_{dR}+\alpha+Q_{\g[1]}$ for some degree one element  $\alpha\in \vX(\g[1])\otimes \Omega^{\ge 1}(M)$. Now apply   Lemma \ref{prop:productQ}.
\end{proof}
}

{
\begin{prop}\label{prop:alphaaction}
Let $M$ be a manifold and $\g$ a $L_{\infty}$-algebra  concentrated in degrees $\le 0$. Consider the DGLA
$(\vX(\g[1]),  
{-}[Q_{\g[1]}, \ ], [\ ,\ ])$. There is a bijection between  
\begin{itemize}
\item Maurer-Cartan elements
$\alpha\in  \vX(\g[1])\otimes \Omega^{\ge 1}(M)$, and
\item  $L_{\infty}$-actions of $TM$ on $\g_M[1]$ compatible with $Q_{\g_M[1]}$.
\end{itemize}
 \end{prop}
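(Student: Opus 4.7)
The plan is to deduce Proposition \ref{prop:alphaaction} by composing the two bijections already proved in this section, namely Lemma \ref{lem:ge1} and Theorem \ref{charvfQM}, applied with $A = TM$ and $\cM = \g_M[1]$. The crucial geometric observation is that, since $\g_M = \g \times M$ is the trivial bundle, there is a canonical identification
\[
\cN := T[1]M \times_M \g_M[1] \;\cong\; T[1]M \times \g[1],
\]
and under this identification the vector field $Q_{\g_M[1]}$ (obtained by trivially extending $Q_{\g[1]}$ in the $M$-direction) is exactly the image of $Q_{\g[1]}$ considered as a vertical vector field on the product. In particular $Q_{\g_M[1]} \in \vX^{vert}_1(\g_M[1])$, so the hypotheses of Theorem \ref{charvfQM} are met.

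First I would invoke Lemma \ref{lem:ge1} to identify Maurer-Cartan elements $\alpha \in \vX(\g[1]) \otimes \Omega^{\ge 1}(M)$ with homological vector fields on $T[1]M \times \g[1]$ of the form $Q_{dR} + \alpha + Q_{\g[1]}$, such that
\[
(\g_M[1], Q_{\g_M[1]}) \;\longrightarrow\; (T[1]M \times \g[1], Q_{dR}+\alpha+Q_{\g[1]}) \;\longrightarrow\; (T[1]M, Q_{dR})
\]
is a sequence of $Q$-manifold morphisms. Second, I would apply Theorem \ref{charvfQM} (with $A = TM$, $\cM = \g_M[1]$ and $Q_\cM = Q_{\g_M[1]}$) to identify precisely the same family of homological vector fields $Q_{tot}$ on $\cN$ with $L_\infty$-actions of $TM$ on $\g_M[1]$ compatible with $Q_{\g_M[1]}$. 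Composing the two bijections yields the desired correspondence.

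The only thing that needs to be checked carefully is that the ``sequence of $Q$-manifolds'' condition appearing in Lemma \ref{lem:ge1} is literally the condition of Theorem \ref{charvfQM} under the identification above. This amounts to the two observations: (i) the projection $T[1]M \times \g[1] \to T[1]M$ corresponds to the projection $\cN \to A[1]$, which sends $Q_{dR} + \alpha + Q_{\g[1]}$ to $Q_{dR}$ since $\alpha$ and $Q_{\g[1]}$ are vertical in the $T[1]M$-direction; and (ii) the inclusion $\g_M[1] \hookrightarrow \cN$ as $\varepsilon(\cM)$ corresponds, via the identification, to the inclusion $\g[1] \hookrightarrow T[1]M \times \g[1]$ as the fiber over the body, and on this fiber $Q_{dR}|=0$ and $\alpha|=0$ (since $\alpha$ lies in $\vX(\g[1]) \otimes \Omega^{\ge 1}(M)$, its restriction to the body $M \subset T[1]M$ vanishes), so that $Q_{tot}$ restricts exactly to $Q_{\g[1]} = Q_{\g_M[1]}|_{\g[1]}$.

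I expect no serious obstacle: all the hard work has been done in Lemma \ref{lem:ge1} (which unpacks the Maurer-Cartan equation into the homological equation $[Q_{dR}+\alpha+Q_{\g[1]}, Q_{dR}+\alpha+Q_{\g[1]}]=0$) and in Theorem \ref{charvfQM}. The delicate point is just the bookkeeping above, in particular verifying that $\alpha \in \vX(\g[1])\otimes\Omega^{\ge 1}(M)$ (rather than all of $\Omega(M)$) is exactly what is needed so that $Q_{tot}$ restricts correctly on $\varepsilon(\cM)$, ensuring compatibility with the prescribed $Q_{\g_M[1]}$.
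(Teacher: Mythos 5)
Your proposal is correct and follows essentially the same route as the paper: both proofs transport the homological vector field $Q_{dR}+\alpha+Q_{\g[1]}$ through the identification $T[1]M\times\g[1]\cong T[1]M\times_M\g_M[1]$ and then compose the bijections of Lemma \ref{lem:ge1} and Thm. \ref{charvfQM}, checking that the diffeomorphism intertwines the projections to $T[1]M$ and the embeddings of $\g_M[1]$. The verification you flag as the delicate point (that $\alpha\in\vX(\g[1])\otimes\Omega^{\ge 1}(M)$ and $Q_{dR}$ restrict to zero on the zero section, so $Q_{tot}$ restricts to $Q_{\g_M[1]}$) is exactly the observation the paper makes parenthetically.
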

\begin{proof}
Consider the diffeomorphism of N-manifolds
\begin{equation}\label{eq:diffeo}
T[1]M\times \g[1]\;\;\cong \;\;T[1]M\times_M \g_M[1]=:\cN.
\end{equation}
 Let $Q_{pr}$ be a homological vector field on $T[1]M\times \g[1]$ as in Lemma \ref{lem:ge1}.
Applying this diffeomorphism, from $Q_{pr}$ we obtain a homological vector field $Q_{tot}$ on $\cN$. We have a  sequence of $Q$-manifolds
$$(\g_M[1],Q_{\g_M[1]})\to (\cN, Q_{tot}) \to (T[1]M,Q_{dR}),$$
since the diffeomorphism \eqref{eq:diffeo} intertwines the projections onto $T[1]M$  and the embeddings of $\g[1]$. (For the second morphism we also 
 use that $Q_{\g[1]}$ and $\alpha$ are  vector fields on $T[1]M\times \g[1]$ which are vertical w.r.t. the first projection). 
 Hence we can apply Lemma \ref{lem:ge1} and Thm. \ref{charvfQM}, and obtain the desired $L_{\infty}$-action. Inverting the argument one sees that this assignment is a bijection.
\end{proof}
We make the above $L_{\infty}$-action more explicit. Choose coordinates on $M$ and linear coordinates on $\g$, giving  canonical coordinates\footnote{In these coordinates, $Q_{dR}=\xi_i \partial_{x_i}$.} $x_i,\xi_i$ (of degrees $0$ resp. $1$)  on $T[1]M$ and coordinates $y_i,\eta_a$ on $\g_M[1]$ (of degrees $0$ resp. $1$). In these coordinates, the diffeomorphism \eqref{eq:diffeo} is $((x,\xi),\eta)\mapsto ((x,\xi),(x,\eta))$.  The homological vector field $Q_{tot}$ is the push-forward of $Q_{dR}+\alpha+Q_{\g[1]}$, and as such it is a sum of 3 terms.
The first term is the restriction to $\cN$ of $(\xi_i \partial_{x_i},\xi_i \partial_{y_i})$ 
The second term is the image of  $\alpha$ under the identification $\Omega(M)\otimes\vX(\g[1])\cong \Omega(M)\otimes_{C^{\infty}(M)}\vX^{{vert}}(\g_M[1])$. The third term is $Q_{\g_M[1]}$.
Hence we can write $$Q_{tot}=Q_{dR}|_{\cN}+\tilde{\alpha}+Q_{\g_M[1]},$$
where $Q_{dR}|_{\cN}$ denotes the restriction to $\cN$ of the   vector field $(\xi_i \partial_{x_i},0)$ on the product $T[1]M\times \g_M[1]$, and where  $\tilde{\alpha}$ is obtained from $\alpha$ adding the restriction to $\cN$ of $(0,\xi_i\partial_{y_i})$.  By applying Thm. \ref{charvfQM}, we obtain an  $L_{\infty}$-action of $TM$ on  $\g_M[1]$ compatible with $Q_{\g_M[1]}$. Its components $F_i$ ($i\ge 1$) correspond to $\tilde{\alpha}$ under the correspondence of Lemma
\ref{lem:bijeasy}.
}
 
\begin{ex}
{Let $\g$ be a Lie algebra, and consider the DGLA $\h=\vX(\g[1])$ as in eq. \eqref{eq:h}. 
Fix a  Maurer-Cartan element $\alpha$ of the DGLA $\h\otimes \Omega^{\ge 1}(M)$. By Lemma \ref{prop:productQ}, it delivers a homological vector field on $T[1]M\times \g[1]=(TM\oplus \g_M)[1]$, \emph{i.e.} a  Lie algebroid structure on $TM\oplus \g_M$, {whose anchor is the first projection (hence in particular a transitive Lie algebroid), and whose bundle of isotropy Lie algebras is trivial with typical fiber the Lie algebra $\g$.
Notice that $\alpha$, being of degree one, is a sum of components $\alpha_1\in \Omega^1(M)\otimes \End(\g)$ and $\alpha_2\in \Omega^2(M)\otimes  \g$. Further, decomposing the Maurer-Cartan equation for $\alpha$ in terms of the differential form degree actually implies that  $\alpha_1\in \Omega^1(M)\otimes \der(\g)$, where $\der(\g)$ denotes the infinitesimal automorphisms of the Lie algebra $\g$.} This
 is consistent with the description of Lie algebroid extensions at the beginning of \S \ref{subsec:extclas}. 
}
Finally,  
Prop. \ref{prop:alphaaction} delivers an  $L_{\infty}$-action  of $TM$ on  $\g_M[1]$ compatible with $Q_{\g_M[1]}$. 
\end{ex}


\section{Transitive Courant algebroids and induced $L_\infty$-actions}\label{sec:CA} 
 
{In the next two sections we shall consider Courant algebroids whose anchor is surjective, and carry out two types of constructions. In this section 
we use the correspondence \cite{Dima} between Courant algebroids and degree 2 symplectic manifolds endowed with a hamiltonian homological vector field  \cite{Dima} in order to apply one implication of Thm. \ref{charvf} and obtain
an $L_{\infty}$-action of $TM$, see \S \ref{subsec:tcaR}. In the special case of exact Courant algebroids worked out in  \S \ref{sec:exactCA}, this recovers the coadjoint representation up to homotopy together with a cocycle.}
\begin{defi}
A \emph{Courant algebroid} is a vector bundle $E$ over $M$ equipped with:
 a symmetric, non-degenerate $C^\infty(M)$-bilinear product: 
$\langle\ ,\, \rangle_E:\Gamma(E)\times \Gamma(E)\to C^\infty(M),$ a vector bundle map $\rho:E\to TM$, called the anchor,
and a $\mathbb{R}$-bilinear bracket $[\ ,\, ]_E:\Gamma(E)\times\Gamma(E)\to\Gamma(E)$ defined on the section of $E$, such that the following conditions are satisfied (see for instance \cite[Def. 4.2]{Dima}):
\begin{align*}
[\alpha,[\beta,\gamma]_E]_E
&=[[\alpha,\beta]_E,\gamma]_E+[\beta,[\alpha,\gamma]_E]_E,\\
\rho[\alpha,\beta]_E&=[\rho(\alpha),\rho(\beta)],\\
[\alpha,f\beta]_E
&=f[\alpha,\beta]_E+\Lie_{\rho(\alpha)}(f)\beta,\\
[\alpha,\alpha]_E
&={\frac{1}{2}}\rho^*d \langle\alpha,\alpha\rangle_E,\\
\Lie_{\rho(\alpha)}\langle\beta,\gamma\rangle_E
&=\langle[\alpha,\beta]_E,\gamma\rangle_E+\langle\alpha,[\beta,\gamma]_E\rangle_E.
\end{align*}
\end{defi}
  
{A Courant algebroid is called \emph{transitive} if the anchor map $ \rho:E\to TM$ is surjective {(more details are given in \S \ref{sec:isotropicsplittings}). An interesting subclass is given by \emph{exact} Courant algebroids, \emph{i.e.} those for which the sequence $T^*\!M\overset{\rho^*}{\to}E^*\cong E\overset{\rho}{\to} TM$ is exact. They are {precisely the transitive Courant algebroids whose rank is $2\dim(M)$}, and they are  all isomorphic to $TM\oplus T^*\!M$ endowed with the $H$-twisted Courant bracket, where $H$ is a closed 3-form.}
 
\subsection{Exact Courant algebroids: {Roytenberg's approach}}\label{sec:exactCA}
{In this subsection we consider a split exact Courant algebroid and show   that it induces a certain $L_{\infty}$-action of $TM$. This
$L_{\infty}$-action consists of a representation up to homotopy together with a cocycle, which were first identified by Sheng-Zhu \cite{CCShengHigherExt}.}

Given a closed 3-form $H$ on $M$, we consider the exact Courant algebroid
$TM\oplus T^*\!M$ with the $H$-twisted Courant bracket. Under Roytenberg's correspondence \cite{Dima}, which we recall {in more generality} in \S \ref{subsec:tcaR}, the associated symplectic $Q$ manifold of degree $2$ is given by:
$$(T^*[2]T[1]M, X_{S_0+H})$$ with the canonical symplectic structure, where  the homological vector field is
the hamiltonian vector field of sum of the de Rham differential of $M$ (viewed as a fiberwise linear function $S_0$)  and the pull-back of the function on $T[1]M$ defined by $H$.

The projection $T^*[2]T[1]M\to T[1]M$ maps the homological vector field
$X_{S_0+H}$ to $Q_{dR}$. Indeed, $X_{S_0}$ is the cotangent lift of $Q_{dR}$, and 
$X_{H}$ is tangent to the fibers since it is the hamiltonian vector field of the pull-back of a function on the base $T[1]M$. {(Alternatively, one can use the coordinate description in eq. \eqref{eq:QCC}.)}

Now recall that for any vector bundle $\pi \colon A \to M$, given a $TM$-connection on $A$, {the corresponding horizontal distribution} 
 induces an identification:
\begin{equation}\label{eq:TA}
TA\cong \Ver \oplus \pi^*TM=\pi^*(A\oplus TM)=A\times_M(A\oplus TM),
\end{equation}
where $\Ver$ denotes the vertical bundle for the projection $\pi:A\to M$.
Hence a connection on $TM$ induces an isomorphism from  $T^*[2]T[1]M$ to
$$\cN:=T[1]M\times_M(T^*[1]M\oplus T^*[2]M),$$ 
which endows $\cN$ with a homological vector field $Q_{SZ}$.
The first projection $\cN\to T[1]M$ is a map of $Q$-manifolds, since under the isomorphism it corresponds to the vector bundle projection 
$T^*[2]T[1]M\to T[1]M$.
Hence by Thm. \ref{charvf} there is an induced $L_{\infty}$-action of $TM$ on
$T^*[1]M\oplus T^*[2]M$. We summarize:

{
\begin{prop}\label{prop:liactionexact}
A closed 3-form $H$ on a manifold $M$, together with a connection on $TM$, induce an $L_{\infty}$-action of $TM$ on
$\cM:=T^*[1]M\oplus T^*[2]M$.\end{prop}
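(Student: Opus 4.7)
The plan is simply to package the observations already made in the text and feed them into Thm.~\ref{charvf}. Concretely, I want to produce, from $H$ and the connection, a homological vector field on
\[
\cN := T[1]M \times_M (T^*[1]M \oplus T^*[2]M)
\]
which projects to $Q_{dR}$ on $T[1]M$; once this is in hand, the direction $(2) \to (1)$ of Thm.~\ref{charvf}, applied to the Lie algebroid $A = TM$ and the N-manifold $\cM = T^*[1]M \oplus T^*[2]M$ (which is concentrated in positive degrees), produces the desired $L_\infty$-action.

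The first input is Roytenberg's correspondence, which gives the degree $2$ symplectic $Q$-manifold $(T^*[2]T[1]M, X_{S_0 + H})$ associated with the $H$-twisted exact Courant algebroid. As already noted in the text, the bundle projection $T^*[2]T[1]M \to T[1]M$ maps $X_{S_0 + H}$ to $Q_{dR}$ (because $X_{S_0}$ is the cotangent lift of $Q_{dR}$ and $X_H$ is fiberwise vertical). The second input is the connection $\nabla$ on $TM$: by \eqref{eq:TA}, extended in the obvious way to the graded setting by applying it to the graded vector bundle $T[1]M \to M$, one has an isomorphism of graded vector bundles over $T[1]M$
\[
T(T[1]M) \;\cong\; T[1]M \times_M \bigl(T[1]M \oplus TM\bigr).
\]
Dualizing the fibers and applying the degree shift $[2]$ to the cotangent directions converts the right-hand factor into $T^*[1]M \oplus T^*[2]M$, yielding
\[
T^*[2]T[1]M \;\cong\; T[1]M \times_M \bigl(T^*[1]M \oplus T^*[2]M\bigr) \;=\; \cN.
\]
Transporting $X_{S_0+H}$ along this isomorphism produces a homological vector field $Q_{SZ}$ on $\cN$. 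Because the isomorphism covers the identity on $T[1]M$, it intertwines the two natural projections to $T[1]M$; consequently $Q_{SZ}$ also projects to $Q_{dR}$.

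This puts us exactly in the hypotheses of Thm.~\ref{charvf} with $A = TM$ and $\cM = T^*[1]M \oplus T^*[2]M$, and the theorem yields an $L_\infty$-action of $TM$ on $\cM$, as claimed. The only delicate point is checking that the isomorphism induced by the connection is well-defined and compatible with the projection to $T[1]M$; this is the $\ZZ$-graded analogue of the standard horizontal/vertical splitting of $TA$ for a vector bundle $A \to M$, and hence poses no real difficulty. Everything else is an immediate transfer of structure along a diffeomorphism of $N$-manifolds.
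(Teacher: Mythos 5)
Your proposal is correct and follows essentially the same route as the paper: Roytenberg's symplectic $Q$-manifold $(T^*[2]T[1]M, X_{S_0+H})$, the observation that the projection to $T[1]M$ sends $X_{S_0+H}$ to $Q_{dR}$, the connection-induced identification $T^*[2]T[1]M\cong T[1]M\times_M(T^*[1]M\oplus T^*[2]M)$ via the horizontal/vertical splitting \eqref{eq:TA}, and finally the direction $(2)\to(1)$ of Thm.~\ref{charvf}. The only cosmetic difference is that you spell out the dualization and degree shift in the splitting slightly more explicitly than the paper does.
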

}

\begin{remark}
If one chooses  canonical coordinates $x^i,v^i$ on $T[1]M$, inducing  coordinates $\xi_i,P_i$ (of degrees $1,2$) on the fibers of  $pr\colon T^*[2]T[1]M\to T[1]M$, we obtain \cite[\S 2.3]{CCShengHigherExt} the following description of $X_{S_0+H}$ in canonical coordinates:
 \begin{equation}\label{eq:QCC}
X_{S_0+H}=
P^i\frac{\partial}{\partial{\xi^i}}
+v^i\frac{\partial}{\partial{x^i}} 
-\frac{1}{6}H_{ijk}v^iv^j\frac{\partial}{\partial{\xi^k}}
+\frac{1}{6}\frac{\partial H_{ijk}}{\partial{x^l}}v^iv^jv^k\frac{\partial}{\partial{P^l}}.
\end{equation} 
\end{remark}

\subsection*{Description of the $L_\infty$ action in classical terms}
In order to make the $L_\infty$-action obtained from  Prop. \ref{prop:liactionexact}  more explicit, we shall now spell out in classical terms  the various components of the associated  $L_\infty$-morphism $F\colon\bigl(\Gamma(TM),[\ , \ ]_{TM}\bigr)\rightsquigarrow \bigl(\vX(\cM), -[Q_{\cM},\ ], [\ ,\ ]\bigr)$ as in  Prop. \ref{prop:liacQ}. To do this, it will be convenient to introduce the following DGLA.

\begin{lemma}\label{lem:DGLA-ShengZhu} Denote by  $\WDGLA(T^*\!M\oplus T^*\!M)$ the graded vector space with grading given by:
\begin{equation*}
 \WDGLA_{-2}:=\Gamma(T^*\!M),\quad
\WDGLA_{-1} := \Gamma\bigl( T^*\!M \oplus \Hom(T^*\!M,T^*\!M) \bigr),\quad
 \WDGLA_{0}:= \der(T^*\!M), 
\end{equation*} 
where $\der(T^*\!M)$ denotes the space of covariant differential operators on $T^*\!M$.

 Then there is structure of a DGLA on $\WDGLA(T^*\!M\oplus T^*\!M)$  with differential defined by:
\begin{align*}
\textbf{d}\eta := (\eta,0),  \quad
\textbf{d}(\eta,\varphi):=\varphi,
\end{align*}
for any  $\eta\in \WDGLA_{-2}$ and $(\eta,v) \in \WDGLA_{-1}$,
and with non trivial brackets given as follows:
\begin{align*}
[ D , D'] &:=D\circ D'-D'\circ D,&
[ D, \eta ]  &:= D(\eta), \\
[ D , (\eta,\varphi)] &:= \bigl(D(\eta), D\circ \varphi- \varphi\circ D \bigr) ,&
[ (\eta,\varphi),(\eta',\varphi')] &:= \varphi(\eta')+\varphi'(\eta),
\end{align*}
where $D,D'\in \WDGLA_0,\ \eta\in \WDGLA_{-2}$ and $(\eta,\varphi), (\eta',\varphi') \in \WDGLA_{-1}$.
\end{lemma}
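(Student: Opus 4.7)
My plan is to verify the DGLA axioms directly, organized by bidegree. First I would record two preliminary facts ensuring the formulas land in the claimed spaces: $\Gamma(\Hom(T^*\!M, T^*\!M))$ embeds in $\der(T^*\!M) = \CDO(T^*\!M)$ as the Lie ideal of operators with vanishing symbol, and a one-line Leibniz computation shows that for $D \in \CDO(T^*\!M)$ and $\varphi \in \Gamma(\Hom(T^*\!M, T^*\!M))$ the commutator $D \circ \varphi - \varphi \circ D$ is again $C^\infty(M)$-linear. Thus $[D, (\eta, \varphi)] = (D\eta, [D, \varphi])$ genuinely lands in $\WDGLA_{-1}$.

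The identity $\textbf{d}^2 = 0$ is immediate, since $\textbf{d}^2 \eta = \textbf{d}(\eta, 0) = 0$ for $\eta \in \WDGLA_{-2}$. Graded antisymmetry is clear in each bidegree; the only noteworthy point is that the symmetric bracket on $\WDGLA_{-1} \otimes \WDGLA_{-1}$ is the correct one, since graded antisymmetry with $|x|=|y|=-1$ reads $[x, y] = [y, x]$. I would then verify the graded Jacobi identity and the derivation property of $\textbf{d}$ case by case on homogeneous inputs, with each case reducing to one of: the Jacobi identity for $(\CDO(T^*\!M), [\cdot, \cdot])$; the fact that the action of $\CDO(T^*\!M)$ on $\Gamma(T^*\!M)$ and on $\Gamma(\Hom(T^*\!M, T^*\!M))$ is a Lie action; or the elementary identity $D(\varphi(\eta)) - \varphi(D\eta) = [D, \varphi](\eta)$. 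The most instructive instance of the derivation property is for two inputs of degree $-1$, where one checks that
\[
\textbf{d}[(\eta, \varphi), (\eta', \varphi')] = (\varphi(\eta') + \varphi'(\eta), 0)
\]
agrees with $[\textbf{d}(\eta, \varphi), (\eta', \varphi')] - [(\eta, \varphi), \textbf{d}(\eta', \varphi')]$, the endomorphism components cancelling via $[\varphi, \varphi'] + [\varphi', \varphi] = 0$.

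The only obstacle is the bookkeeping of cases and signs; nothing is conceptually difficult. A more conceptual alternative would be to identify $\WDGLA(T^*\!M \oplus T^*\!M)$ as the semidirect product of the sub-Lie-algebra of ``diagonal'' CDOs on the two-term complex $C = \bigl(\Gamma(T^*\!M) \xrightarrow{\id} \Gamma(T^*\!M)\bigr)$ (placed in degrees $-2, -1$) with $C$ itself as an abelian module, a presentation in which the DGLA axioms follow from the general semidirect-product construction.
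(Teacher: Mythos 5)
Your proposal is correct: the paper explicitly leaves this lemma to the reader as a routine computation, and your case-by-case verification of the DGLA axioms is precisely that computation, with the two genuinely non-obvious points correctly isolated (that $\Gamma(\Hom(T^*\!M,T^*\!M))$ sits inside $\CDO(T^*\!M)$ as the symbol-zero operators so that $\textbf{d}$ and $[D,\varphi]$ land where claimed, and the cancellation $[\varphi,\varphi']+[\varphi',\varphi]=0$ in the derivation identity on two degree $-1$ inputs). The closing semidirect-product reformulation is a sound conceptual packaging but is not needed for the verification.
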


The proof of Lemma \ref{lem:DGLA-ShengZhu}
 follows from routine computations and will be left to the reader. The interpretation of $\WDGLA(T^*\!M\oplus T^*\!M)$ from the graded {geometric} setting shall be made clearer in remark \ref{rem:subdgexac} 1). Now let us pursue the description of the $L_\infty$-action in a purely classical setting.

\begin{prop}\label{prop:exactexpl}
Consider a closed $3$-form  $H\in \Omega^3_{cl}(M)$  defined on a manifold $M$ together with a $TM$-connection $\nabla$ on $TM$. We denote by $c_2\in \Omega^2(M,T^*\!M)$ and by $c_3\in \Omega^3(M,T^*\!M)$ the tensors  respectively given by the following formulas:
\begin{align} 
c_2(u,v)&:=i_{u_\wedge v}H,  \label{eq:cocyclec2}\\
c_3(u,v,w)&:=\oint_{u,v,w} \nabla^*_u (c_2(v,w))-c_2([u,v],w),\label{eq:cocyclec3}
\end{align}
where the integral sign in \eqref{eq:cocyclec3} denotes the cyclic sum, and $\nabla^*$ is the connection dual to $\nabla$.

  Then, with the notations above, the components 
\begin{align*}
F_1&=\nabla^*:\Gamma(TM) \to \WDGLA_{0},\\
F_2&=(c_2,C_{\nabla^*}):\Gamma(\wedge^2 TM) \to \WDGLA_{-1},\\
F_3&=c_3:\Gamma(\wedge^3 TM)\to \WDGLA_{-2},
\end{align*}
define a   $C^\infty(M)$-linear   $L_\infty$-morphism $F:\Gamma(TM) \to \WDGLA(T^*\!M\oplus T^*\!M)$. Here, we denoted by $$C_{\nabla^*}:=[\nabla^*,\nabla^*]-\nabla^*_{[\ ,\ ]}$$ the curvature of $\nabla^*$, and $\WDGLA(T^*\!M\oplus T^*\!M)$ is the DGLA of Lemma \ref{lem:DGLA-ShengZhu}.
\end{prop}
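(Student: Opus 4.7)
The plan is to apply Thm.~\ref{charvf} together with Prop.~\ref{prop:liacQ} to the homological vector field $Q_{SZ}$ on $\cN := T[1]M \times_M \cM$ constructed in Prop.~\ref{prop:liactionexact}, where $\cM := T^*[1]M \oplus T^*[2]M$. This yields an $L_\infty$-morphism
\[
F\colon \bigl(\Gamma(TM), [\ ,\ ]_{TM}\bigr) \rightsquigarrow \bigl(\vX(\cM), -[Q_\cM, \cdot\,], [\ ,\,]\bigr)
\]
which is automatically $C^\infty(M)$-multilinear, has $F_1$ with the correct symbol, and (by the comment after Prop.~\ref{prop:liacQ}) takes values in $\vX_{<0}(\cM) \oplus \{X \in \vX_0(\cM) : [Q_\cM, X] = 0\}$. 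It then suffices to (a) identify this sub-DGLA with $\WDGLA(T^*\!M \oplus T^*\!M)$ of Lemma~\ref{lem:DGLA-ShengZhu}, and to (b) compute the components of $F$ explicitly in classical terms.

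For (a), I would identify $\WDGLA_{-2}$ with the constant vertical vector fields along the $T^*[2]M$-factor of $\cM$; the summand $\Gamma(T^*\!M) \subset \WDGLA_{-1}$ with constant vertical vector fields along the $T^*[1]M$-factor; the summand $\Gamma(\Hom(T^*\!M, T^*\!M)) \subset \WDGLA_{-1}$ with vector fields sending $\xi$-linear functions to $P$-linear ones; and $\WDGLA_0$ with covariant differential operators acting diagonally on the two copies of $T^*\!M$ (which are exactly those degree-zero vector fields commuting with $Q_\cM$). The induced homological vector field on $\cM$ is $Q_\cM = P^i\, \partial_{\xi^i}$ (obtained by restricting~\eqref{eq:QCC} to $\cM$, \emph{i.e.}~setting $v^i = 0$), and its graded commutator with the above vector fields reproduces the differential $\textbf{d}$ of Lemma~\ref{lem:DGLA-ShengZhu} up to sign, while the Lie bracket recovers the stated brackets.

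For (b), I would compute $F$ via Rem.~\ref{rem:phiXwithQ}: writing $X := Q_{SZ} - Q_{dR}|_\cN$ and iteratively bracketing with $\iota_u$ for $u \in \Gamma(TM[1])$. To do this concretely, one rewrites~\eqref{eq:QCC} under the change of coordinates on $T^*[2]T[1]M$ induced by $\nabla$ that puts $\cN$ into the split form $T[1]M \times_M (T^*[1]M \oplus T^*[2]M)$. The Christoffel symbols of $\nabla$ introduced by this change of variables reorganize the terms $P^i \partial_{\xi^i}$ and $v^i \partial_{x^i}$ of~\eqref{eq:QCC} so that the polynomial bidegree $(1,\ast)$ part of $X$ is precisely $\nabla^*$, giving $F_1 = \nabla^*$. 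The bidegree $(2,\ast)$ component of $X$ receives contributions from the $H$-term in~\eqref{eq:QCC} (producing $c_2 = \iota\iota H$) and from the noncommutativity of horizontal lifts (producing the curvature $C_{\nabla^*}$ of $\nabla^*$), which together give $F_2 = (c_2, C_{\nabla^*})$. Finally, the bidegree $(3,\ast)$ part comes from the $\partial H$-term of~\eqref{eq:QCC}; re-expressing $\partial_{x^l} H_{ijk}$ via $\nabla^*$ yields precisely the cyclic combination~\eqref{eq:cocyclec3}, so $F_3 = c_3$. After applying the d\'ecalage isomorphism (which introduces only a sign depending on arity since $\Gamma(TM)$ is concentrated in degree zero), these are the stated components.

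The main obstacle is this coordinate bookkeeping, since the canonical symplectic coordinates on $T^*[2]T[1]M$ mix nontrivially with the split product coordinates under the connection-induced isomorphism. However, since $Q_{SZ}$ is by construction homological, Thm.~\ref{charvf} \emph{automatically} guarantees that the resulting $F$ satisfies the $L_\infty$-morphism equations---closedness of $H$ (which is precisely what gives $Q_{SZ}^2 = 0$) is reflected in the higher compatibilities, and one obtains them for free rather than having to check the equations~\eqref{eq:explicit1}-\eqref{eq:explicit2} directly. Thus the work reduces to verifying that the classical tensors $\nabla^*$, $(c_2, C_{\nabla^*})$ and $c_3$ really arise as the successive components of the derived bracket construction of Lemma~\ref{lem:bijeasy}; the $C^\infty(M)$-linearity and the vanishing of higher components (by degree reasons, since $\vX_i(\cM) = 0$ for $i < -2$) come for free.
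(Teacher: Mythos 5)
Your proposal takes a genuinely different route from the paper. The paper proves Prop.~\ref{prop:exactexpl} by brute force in classical terms: it observes that $C^\infty(M)$-linearity is manifest, that equation \eqref{eq:explicit1} is the definition of $C_{\nabla^*}$, that \eqref{eq:explicit2} amounts to the defining formula \eqref{eq:cocyclec3} (i.e.\ $c_3=\nabla^*c_2$) together with the Bianchi identity, and that \eqref{eq:explicit3} follows from the standard fact that $\nabla^*\circ\nabla^*$ acting on $\Omega^\bullet(M,\Hom(T^*\!M,T^*\!M))$ is pairing with the curvature. No graded geometry enters the paper's proof at all. You instead derive the statement from Thm.~\ref{charvf} applied to the homological vector field of Prop.~\ref{prop:liactionexact}; this is essentially the content that the paper relegates to Remark \ref{rem:subdgexac} (where item 1) is your step (a) and item 2) is your step (b)), and even there the paper only \emph{sketches} the comparison, citing the coordinate formulae of Sheng--Zhu. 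The advantage of your route is conceptual: the $L_\infty$-morphism identities come for free from $Q_{SZ}^2=0$. The advantage of the paper's route is that it is short, self-contained, and does not require untangling the connection-induced trivialization of $T^*[2]T[1]M$.

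The weakness of your proposal is that the entire mathematical content of the proposition, once one adopts your strategy, is concentrated in step (b) --- showing that the successive components of $\phi_X$ really are $\nabla^*$, $(c_2,C_{\nabla^*})$ and $c_3$ and land in the sub-DGLA $J(\WDGLA)$ --- and this step is described qualitatively (``the Christoffel symbols \dots reorganize the terms'', ``the noncommutativity of horizontal lifts produces $C_{\nabla^*}$'') rather than carried out. Until that computation is done, including the verification of the precise coefficients matching \eqref{eq:cocyclec2}--\eqref{eq:cocyclec3} and the signs from the d\'ecalage, the proof is incomplete exactly where the paper's direct verification does its work. Two smaller points: your claim that $\WDGLA_0$ consists ``exactly'' of the degree-zero vector fields commuting with $Q_\cM$ overstates Remark \ref{rem:subdgexac}~1), where $\vX_0(\cM)\cong\Gamma(\wedge^2TM\otimes T^*\!M)\oplus\der(T^*\!M\oplus T^*\!M)$ and $J(\WDGLA_0)$ is only the diagonal copy of $\der(T^*\!M)$ --- what one actually needs, and what the paper checks in coordinates, is that $\vX_{-2}(\cM)\oplus\vX_{-1}(\cM)\oplus J(\WDGLA_0)$ is a sub-DGLA containing the image of $F$. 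Also note that the paper's argument reveals that closedness of $H$ is not actually used in verifying \eqref{eq:explicit1}--\eqref{eq:explicit3}, whereas your route needs $dH=0$ to make $Q_{SZ}$ homological in the first place; this is harmless for the statement as given but shows the two proofs are not interchangeable in scope.
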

\begin{proof}  First notice that all the components of $F$ are $C^\infty(M)$-linear maps, so $F$ is indeed $C^\infty(M)$-linear. The conditions for $F$ to define an $L_\infty$-morphism are detailed in Remark \ref{rem:explicit-Linfty-morphism}. They
 are either trivially satisfied, or follow from the construction. More precisely, the equation \eqref{eq:explicit1} is trivially checked, and the equation \eqref{eq:explicit2} is equivalent to \eqref{eq:cocyclec3} together with the Bianchi identity.  In order to see that  \eqref{eq:explicit3} holds as well, let us denote by: $$\nabla^*:\Omega^k\bigl(M, \Hom(T^*\!M,T^*\!M)\bigr)\to \Omega^{k+1}\bigl(M, \Hom(T^*\!M,T^*\!M)\bigr)$$ the covariant derivative operator induced by $\nabla^*$. Then the equation \eqref{eq:cocyclec3} reads $c_3=\nabla^* c_2$, and \eqref{eq:explicit3} follows from the well-known fact that the composition $\nabla^*\circ\nabla^*$ is obtained by pairing with the curvature. 
\end{proof}

\begin{remark}[On the coadjoint representation up to homotopy and cocycles]\label{rem:SZ} 
The construction in Prop. \ref{prop:exactexpl} has an interpretation in terms of representations  up to homotopy due to Sheng-Zhu \cite{CCShengHigherExt} that we now briefly recall. In the setting of {Prop. \ref{prop:exactexpl}} we obtain, by the choice of a connection $\nabla$, a  representative of the coadjoint representation up to homotopy  of the Lie algebroid $TM$. This means that we have a representation up to homotopy (as in \S\ref{sec:inftyrep}) of the Lie algebroid $TM$ on the graded vector bundle $T^*\!M\oplus T^*[1]M$, whose components are given by $(\id_{T^*\!M},\nabla^*,C_{\nabla*})$. In that context, the pair $(c_2,c_3)$ obtained from $H\in \Omega^3_{cl}(M)$ by equations \eqref{eq:cocyclec2} and \eqref{eq:cocyclec3} can be  seen as $2$-cocycle with values in this representation up to homotopy  (see \cite[Prop. 4.9]{CCShengHigherExt}). 
\end{remark}

\begin{remark}\label{rem:subdgexac}
{Let us briefly explain how the $L_{\infty}$-morphism of Prop. \ref{prop:exactexpl} can be viewed as an $L_{\infty}$-action, and why this $L_{\infty}$-action
agrees with the one displayed in Prop. \ref{prop:liactionexact}.}
\begin{enumerate}[1)]
\item In the graded setting, $\WDGLA(T^*\!M\oplus T^*\!M)$  can be interpreted as a sub-DGLA of symmetries of $T^*[1]M\oplus T^*[2]M$. More precisely,  there is a $C^\infty(M)$-linear injective morphism of DGLAs: 
$$\WDGLA(T^*\!M\oplus T^*\!M)\hookrightarrow  \bigl(\vX(\cM), -[Q_{\cM}, \ ], [\,\ ,\ ]\bigr)$$
 where $\cM:=T^*[1]M\oplus T^*[2]M,$ with  homological vector field $Q_{\cM}=P_i\partial_{\xi_i}$. Here we choose local coordinates on $\cM$ as in equation \eqref{eq:QCC}, \emph{i.e.} coordinates $x_i$ on $M$, which induce  linear coordinates $\xi_i$ on the fibers of $T^*[1]M$ and $P_i$ on the fibers of $T^*[2]M$. In order to see this, notice that we have canonical isomorphisms $\vX_{-2}(\cM)\cong \WDGLA_{-2}$ and
$\vX_{-1}(\cM)\cong \WDGLA_{-1}$, while
$\vX_{0}(\cM)\cong \Gamma((\wedge^2TM)\otimes T^*\!M)\oplus \der(T^*\!M\oplus T^*\!M)$, as one can see in coordinates. 
 Define 
$$J\colon \WDGLA_{0}=\der(T^*\!M)\to \der(T^*\!M\oplus T^*\!M)\subset \vX_{0}(\cM)$$ 
to be the diagonal embedding. Then $\vX_{-2}(\cM)\oplus \vX_{-1}(\cM)\oplus J(\WDGLA_0)$ is a sub-DGLA of $(\vX(\cM), -[Q_{\cM}, \ ], [\ ,\ ])$, as can be checked in local coordinates. Note that the injection $\WDGLA(T^*M\oplus T^*M)\hookrightarrow (\vX(\cM), -[Q_{\cM}, \ ], [\ ,\ ])$ obtained this way is $C^\infty(M)$-linear. 

{As a consequence, composing the $L_{\infty}$-morphism $F$ of Prop. \ref{prop:exactexpl} with
this injection we obtain  an $L_\infty$-action of $TM$ on $T^*[1]M\oplus T^*[2]M$.}
\item 
{The $L_{\infty}$-action obtained from Prop. {\ref{prop:exactexpl}}  (see item 1) above) agrees with the one displayed in Prop. {\ref{prop:liactionexact}}.}

 We sketch a proof using the 
notation introduced at beginning of this subsection. Notice that $\cN$ is a graded vector bundle over $M$. In \cite[Prop. 4.9]{CCShengHigherExt} the  homological vector field $Q_{SZ}$ on $\cN$ is described in terms of the corresponding\footnote{The correspondence is recalled in Lemma \ref{nqnla}.} Lie-2 algebroid structure on $\cN[-1]=A\oplus E_0\oplus E_{-1}[1]$, where $
A=TM$, $E_0=E_{-1}=T^*\!M$. The Lie-2 algebroid structure is the one obtained by the coadjoint representation up to homotopy of $TM$ and the 2-cocycle $(c_2,c_3)$. It is
spelled out in \cite[Lemma 4.2 and eq. (38)]{CCShengHigherExt}:
the anchor is the one of $A$ (the identity), the first bracket $l_1$ is the identity $E_{-1}\to E_0$, $l_2$ is given by the Lie algebra bracket on $\Gamma(A)$ together with $\nabla^*$ and $c_2$, and $l_3$ is given by $C_{\nabla^*}$ together with $c_3$. Removing the two contributions {given by the anchor and bracket of the Lie algebroid  $A$} and rearranging the remaining terms according to the number of entries in $\Gamma(A)$ they allow, {as described in the proof of Thm. \ref{charvf}}, one obtains the $L_{\infty}$-action of Prop. \ref{prop:liactionexact}. Comparing the formulae one sees that it agrees with the one displayed in Prop. \ref{prop:exactexpl}.
\end{enumerate}
\end{remark} 

{{Notice that the interpretation given in Rem. \ref{rem:SZ} of the construction of Prop. \ref{prop:exactexpl} -- which by Rem. \ref{rem:subdgexac} is essentially the $L_{\infty}$-action displayed in Prop. \ref{prop:liactionexact} -- 
is consistent with our Thm. \ref{cor:finftyfn}.
}
 Indeed, the expression for {the homological vector field $X_{S_0+H}$} in eq. \eqref{eq:QCC} shows that the 
 $L_{\infty}$-action of $TM$ given in Prop. \ref{prop:liactionexact} is by vector fields on $T^*[1]M\oplus T^*[2]M$ which are fiber-wise linear -- corresponding to the coadjoint  
 representation up to homotopy -- and vector fields which are vertical and  constant along the fibers -- corresponding to the 2-cocycle $(c_2,c_3)$.

 \subsection{{Review of} transitive Courant algebroids and isotropic splittings}\label{sec:isotropicsplittings}

{In this subsection we review some well-known facts about transitive Courant algebroids.}

Consider a Courant algebroid $E\to M$ with anchor map $\rho:E\to TM$, pairing $\la\ ,\, \ra_E:E\oplus E\to \RR$ and   bracket $[\ ,\,]_E:\Gamma(E)\times \Gamma(E)\to \Gamma(E)$. 

If we assume that $E$ is \emph{transitive}, meaning that $\rho$ is surjective, then $\ker \rho$ is a \emph{smooth} subbundle of $E$ that is easily checked to be coisotropic $( \ker \rho^\perp \subset \ker \rho)$ for the pairing $\la\ ,\, \ra_E$. Therefore one can consider the following quotients: 
\begin{align*}
\g_M:=&\ker \rho/(\ker \rho)^\perp,\\
A:=&E/(\ker \rho)^\perp.
\end{align*}
 We obtain four exact sequences of vector bundles as follows:
   \begin{equation}\begin{gathered}\begin{tikzcd}[row sep=tiny, column sep=tiny]
            {}   &                  &  \g_M\arrow[start anchor=-40, end anchor=130, hook]{ddr} &              & \\
               &                  &                &              & \\
               &{\ker \rho}
           \arrow[end anchor=205, two heads]{uur}\arrow[hook]{dr}  &                & A\arrow[end anchor=100, two heads]{dddr}& \\
               &                  & E\arrow[start anchor=-5, end anchor=160, two heads]{ddrr}
                                      \arrow[start anchor=50, end anchor=220, two heads]{ur}   &               & \\
               &                                   &               & \\
   \ker\rho^\perp
   \arrow[start anchor=70, end anchor=235, hook]{uuur}
   \arrow[start anchor=5, end anchor=180, hook]{uurr}&                  &               &               &   TM\quad 
   \end{tikzcd}\end{gathered}\label{diag:transitivecourant}\end{equation}
As it turns out \cite{ChenStienonXu}, the   bracket {$[\ ,\,]_E$} descends to a Lie bracket $[\ ,\, ]_A$ on the sections of $A$  turning $A$ into a transitive Lie algebroid over $M$ with bundle of isotropy Lie algebras $(\g_M,[\ ,\, ]_{\g_M})$. The pairing $\la\ ,\, \ra_E$ also descends to a non-degenerate pairing $\la\ ,\, \ra_{\g_M}$ on $\g_M$, and it is easily checked that $\g_M$ becomes this way a bundle of quadratic Lie algebras. 

Finally, notice that if we denote by $\Xi:E \to E^*$ the musical isomorphism  $e\mapsto \la e,\ \ra_E$, the composition $\Xi^{-1}\circ \rho^*:T^*\!M\to E$ induces an identification:  
$$T^*\!M\simeq (\ker \rho)^\perp. $$

Now let us choose an \emph{isotropic splitting} $\sigma :TM\to E$ of the anchor $\rho$, meaning that its image $\sigma(TM)$ is isotropic for $\la\ ,\ \ra_E$. Note that such a splitting always exits. Then there is an induced splitting  $\lambda_\sigma$ of the canonical projection $\ker \rho \twoheadrightarrow \g_M$,  obtained by imposing its image $\lambda_\sigma(\g_M)$ to be orthogonal to $\sigma(TM) $ with respect to $\la\ ,\ \ra_E$  (more details in \cite[Lem. 1.2]{ChenStienonXu}, {see also \cite[\S 3.3]{Hekmati}}). It follows that one can identify  successively $E$, as a vector bundle, to a direct sum as follows:
\begin{equation}
E= \sigma(TM)\oplus \ker(\rho)\cong TM\oplus \g_M\oplus  T^*\!M.\label{eq:dissection}
\end{equation} 
so that the anchor becomes the projection onto $TM$ and the pairing becomes:
 $$\langle (X,s,\xi), (Y,t,\eta)\rangle=\iota_X\eta+\iota_Y\xi+\langle s,t \rangle_{\g_M}.$$
 Note that in  the identification \eqref{eq:dissection} we use a slightly different convention than in \cite{ChenStienonXu}, in order to obtain the above simpler pairing.

 \subsection{Transitive Courant algebroids: Roytenberg's approach}\label{subsec:tcaR}
 
{In this subsection we extend the results of \S \ref{sec:exactCA} to transitive Courant algebroids (but do not present explicit  formulae as was done there).} {In Prop. \ref{prop:CAroy} we show that a transitive Courant algebroid over $M$, upon making certain choices, induces a  $L_{\infty}$-action of $TM$.}

 {There is a bijection between Courant algebroids and degree 2 symplectic  $Q$ manifolds, as we now recall (see Roytenberg \cite[Thm. 4.5]{Dima}). Given   a Courant algebroid $E$ over $M$, let $\cE$ be the vector bundle over $E[1]$ obtained as the pull back of $T^*[2]E[1]$ along the embedding  $E[1]\hookrightarrow (E\oplus E^*)[1]$ given by $\alpha\mapsto (\alpha,\frac{1}{2}\langle\alpha,\ \rangle_E)$:}
\begin{equation}
\begin{tikzcd}\label{diag:four}
\cE \arrow[hook, dashed]{r} \arrow[two heads, dashed]{d}
& T^*[2]E[1] \arrow[two heads]{d}\\
E[1] \arrow[hook]{r}
& (E\oplus E^*)[1].
\end{tikzcd}
\end{equation}
In other words, $\cE=E[1]\times_{E\oplus E^*[1]}T^*[2]E[1]$.
Note that the projection $\cE\to E[1]$ fits $\cE$ into an exact sequence of graded vector bundles:
\begin{equation*}
 T^*[2]M\hookrightarrow \cE\twoheadrightarrow E[1].
\end{equation*}
 The canonical degree $2$ form on $T^*[2]E[1]$ pulls back to a symplectic form on $\cE$ by the injection $\cE \hookrightarrow T^*[2]E[1]$. Furthermore, $\cE$ comes with a degree 3 hamiltonian function $H$. {The degree two symplectic N-manifold $\cE$ together with $H$ encode the Courant algebroid structure on $E$   \cite{Dima}.}\\

{Now let $E$ be a transitive Courant algebroid over $M$. Let $\cE$ be the associated degree 2 symplectic  $Q$ manifolds as above. 
We will prove in  Lemma \ref{lem:transitive-courant-projection-is-Qmorphism} below that the (surjective) composition $\cE\to E[1]\to T[1]M$ is a $Q$-morphism.} 
In order to apply Thm. \ref{charvf}, we need an identification of the form $\cE\cong T[1]M\times_M \cN$.
}
{
Choosing a connection on the vector bundle $E$ allows us to write $T^*[2]E[1]\cong  E[1]\oplus E^*[1]\oplus T^*[2]M$ as in eq. \eqref{eq:TA}. {On its pull-back $\cE$ as in the diagram \eqref{diag:four}}, this induces an identification
$\cE=E[1]\times_{E\oplus E^*[1]}T^*[2]E[1]\cong E[1]\oplus T^*[2]M$. Hence, we obtain:
$$\cE\cong  E[1]\oplus T^*[2]M
\cong 
T[1]M\times_M\bigl((\g_M[1]\oplus T^*[1]M)\oplus T^*[2]M\bigr),$$
where the second isomorphism is given by \eqref{eq:dissection}  upon choosing an isotropic splitting of $E$.}

We can now apply Thm. \ref{charvf} in order to obtain an induced $L_{\infty}$-action of $TM$ on
$(\g_M[1]\oplus T^*[1]M)\oplus T^*[2]M$. {Notice that $\g_M \oplus T^*\!M\cong A^*$ as vector bundles, using the non-degenerate pairing of $\g_M$ and the isotropic splitting.}

\begin{prop}\label{prop:CAroy}
Let $E$ be a  transitive Courant algebroid $E$ over $M$ with anchor $\rho:E\to TM$. Then the choice of an isotropic splitting $\sigma:TM\to E$ of $\rho$, together with  a $TM$-connection on $E$,  
 induce an $L_{\infty}$-action of $TM$ on  
 {$A^*[1]\oplus T^*[2]M$}. 
{Here  $A$ is the transitive Lie algebroid associated to $E$.}
\end{prop}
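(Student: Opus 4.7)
The overall strategy is to reinterpret the transitive Courant algebroid $E$ as a degree $2$ symplectic $Q$-manifold $\cE$ via Roytenberg's correspondence, to exhibit the (surjective) projection $\cE\to T[1]M$ as a morphism of $Q$-manifolds, and finally to apply Thm.~\ref{charvf}. The two auxiliary data in the statement, namely the isotropic splitting $\sigma$ and the connection on $E$, are used only to produce the identification $\cE\cong T[1]M\times_M(A^*[1]\oplus T^*[2]M)$ which is needed to apply the main theorem.

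The first step is Lemma~\ref{lem:transitive-courant-projection-is-Qmorphism} (which we assume, as in the paper): the natural composition $\cE\to E[1]\to T[1]M$ intertwines the hamiltonian homological vector field on $\cE$ with the de Rham vector field $Q_{dR}$ on $T[1]M$. This is where transitivity is crucial, and it is conceptually the main obstacle: one must check that $\rho\colon E\to TM$, which is only a vector bundle map at the level of $E$, lifts through the Roytenberg construction to a $Q$-morphism. The surjectivity of $\rho$ (and the fact that the anchor of $TM$ is the identity) makes this work; the point is that $\rho$ induces a Lie algebroid map at the level of the associated Lie $2$-algebroid, hence a $Q$-morphism on the shifted cotangent.

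Next I would use the two choices to rewrite $\cE$ as a fiber product over $M$. The connection on $E$ produces a splitting $T^*[2]E[1]\cong E[1]\oplus E^*[1]\oplus T^*[2]M$ as in \eqref{eq:TA}, which restricted to the pull-back diagram \eqref{diag:four} yields
\[
\cE\;\cong\; E[1]\oplus T^*[2]M.
\]
The isotropic splitting $\sigma$, combined with the induced splitting $\lambda_\sigma$ of $\ker\rho\twoheadrightarrow\g_M$, provides the decomposition \eqref{eq:dissection}, i.e.\ $E\cong TM\oplus\g_M\oplus T^*M$. Shifting by $[1]$ and combining with the previous identification,
\[
\cE\;\cong\; T[1]M\times_M\bigl(\g_M[1]\oplus T^*[1]M\oplus T^*[2]M\bigr).
\]
Finally, the non-degenerate pairing $\langle\ ,\ \rangle_{\g_M}$ identifies $\g_M\cong\g_M^*$, and together with $\sigma$ it identifies $A=\sigma(TM)\oplus\g_M$, hence $A^*\cong T^*M\oplus\g_M^*\cong T^*M\oplus\g_M$. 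Therefore the bracketed factor above equals $A^*[1]\oplus T^*[2]M$, as required.

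With these identifications in place, the homological vector field on $\cE$ transports to a homological vector field $Q_{tot}$ on
\[
\cN\;:=\;T[1]M\times_M\bigl(A^*[1]\oplus T^*[2]M\bigr),
\]
and by Lemma~\ref{lem:transitive-courant-projection-is-Qmorphism} the projection $\cN\to T[1]M$ sends $Q_{tot}$ to $Q_{dR}$. This is exactly the situation of item (2) in Thm.~\ref{charvf} with $A=TM$ and $\cM=A^*[1]\oplus T^*[2]M$, and the theorem produces the desired $L_\infty$-action. No explicit formulae for the components need to be derived at this stage (they would be analogous to, and generalize, the exact case of Prop.~\ref{prop:exactexpl}), so the proof essentially reduces to assembling the identifications above and invoking the main theorem.
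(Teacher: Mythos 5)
Your proposal is correct and follows essentially the same route as the paper: Roytenberg's correspondence, Lemma \ref{lem:transitive-courant-projection-is-Qmorphism} for the $Q$-morphism property of $\cE\to T[1]M$, the identifications $\cE\cong E[1]\oplus T^*[2]M\cong T[1]M\times_M(A^*[1]\oplus T^*[2]M)$ coming from the connection, the isotropic splitting and the pairing on $\g_M$, and finally Thm.~\ref{charvf}. The only minor discrepancy is your conceptual gloss on why the Lemma holds (the anchor inducing a $Q$-morphism ``on the shifted cotangent''), which is not how the paper establishes it --- there it is a direct computation in an adapted frame with constant pairing --- but since you explicitly defer to the Lemma this does not affect the argument.
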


In the following lemma, we {fill in the gap left above by  proving} that the projection $\cE\to T[1]M$ is indeed a morphism of $Q$-manifolds.
{
\begin{lemma}\label{lem:transitive-courant-projection-is-Qmorphism}
The projection   $\cE\cong E[1]\oplus T^*[2]M\to E[1]\to T[1]M$,
where the first map is the projection and the last map is induced by the anchor, maps the homological vector field $Q=\{H,\ \}$ to the de Rham vector field. 
\end{lemma}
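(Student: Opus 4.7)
The plan is to verify the $Q$-morphism property at the level of function algebras. Since the pullback $\pi^* \colon C(T[1]M) \to C(\cE)$ is a morphism of graded commutative algebras and both $Q$ and $Q_{dR}$ are derivations of degree one, to establish $Q \circ \pi^* = \pi^* \circ Q_{dR}$ it suffices to check equality on a generating set of $C(T[1]M) = \Omega(M)$. I would take the generators $C^\infty(M)$ in degree zero together with $d(C^\infty(M))$ in degree one, which generate $\Omega(M)$ as a graded algebra.

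First I would make $\pi^*$ explicit. In degree zero it is the identity $C^\infty(M) \to C^\infty(M) = C_0(\cE)$. In degree one, $\pi$ is induced by the anchor $\rho \colon E \to TM$, and under Roytenberg's identification $C_1(\cE) \cong \Gamma(E)$ determined by the Courant pairing, the pullback $\Omega^1(M) \to C_1(\cE) = \Gamma(E)$ is the familiar map $\rho^* \colon T^*\!M \to E$, namely $\rho^\vee \colon T^*\!M \to E^*$ composed with the inverse of the musical isomorphism $E \cong E^*$.

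On a degree zero generator $f \in C^\infty(M)$, the equation to verify is $\{H, f\} = \rho^*(df)$ in $C_1(\cE) = \Gamma(E)$. This is exactly the way in which the anchor is encoded in Roytenberg's cubic Hamiltonian: for every $e \in \Gamma(E)$, graded Jacobi (combined with $\{e,f\} = 0$ for degree reasons) yields $\{e, \{H, f\}\} = \{\{H, e\}, f\}$, which equals $\rho(e)(f)$ by the Roytenberg formula for the anchor; since the Poisson bracket of two degree one functions is the Courant pairing, this rewrites as $\langle e, \{H, f\}\rangle_E = \langle e, \rho^* df\rangle_E$ and the claim follows by non-degeneracy. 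For a degree one generator $df$ we must check $Q(\pi^*\!(df)) = \pi^*(d^2 f) = 0$; the previous step identifies the left hand side with $\{H, \{H, f\}\}$, which vanishes because $\{H, H\} = 0$ and the graded Jacobi identity together force $\{H, \{H, f\}\} = \tfrac{1}{2}\{\{H, H\}, f\} = 0$.

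The main obstacle is pinning down the degree one component of $\pi^*$: the graded manifold $\cE$ is built as a pullback involving the embedding $E[1] \hookrightarrow (E \oplus E^*)[1]$, $\alpha \mapsto (\alpha, \tfrac{1}{2}\langle \alpha, \cdot\rangle_E)$, so the identification $C_1(\cE) \cong \Gamma(E)$ tacitly uses the Courant pairing. Once this is unwound, both verifications above reduce to standard properties of Roytenberg's encoding of $(E, \rho, \langle\,,\,\rangle_E, [\,,\,]_E)$ and to $Q$ being homological.
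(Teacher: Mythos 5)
Your proof is correct, but it follows a genuinely different route from the paper's. The paper proves the lemma by brute-force computation in coordinates: it writes down Roytenberg's $\Omega$ and $H=\rho_a^i\xi^ap_i-\frac{1}{6}\phi_{abc}\xi^a\xi^b\xi^c$ in a frame $e_a$ with constant pairing, computes $Q=\{H,\ \}$ explicitly under the simplifying assumption that the $\rho_a^i$ are constant (which it arranges by choosing the frame adapted to the isotropic splitting $E\cong TM\oplus\g_M\oplus T^*\!M$, so that $\rho_a^i=\delta_a^i$), and then checks term by term that the pushforward of $Q$ under $(q_i,p_i,\xi^a)\mapsto(q_i,\{\xi^a\}_{a\le N})$ is $\xi^a\partial_{q_a}$. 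You instead verify $Q\circ\pi^*=\pi^*\circ Q_{dR}$ on the algebra generators $f$ and $df$ of $\Omega(M)$, reducing everything to Roytenberg's derived-bracket encodings $\{e_1,e_2\}=\langle e_1,e_2\rangle_E$ and $\{\{H,e\},f\}=\rho(e)f$, the graded Jacobi identity, non-degeneracy of the pairing, and $\{H,H\}=0$; since both sides are degree-one derivations along $\pi^*$, agreement on generators suffices. Your argument is cleaner and more conceptual: it needs no special frame, no isotropic splitting, and no assumption that the anchor coefficients are constant, and it makes transparent that the statement depends only on the canonical projection $\cE\to E[1]\to T[1]M$. What the paper's computation buys in exchange is an explicit local expression for $Q$ in the adapted frame, of the same flavor as the coordinate formulas it uses elsewhere (e.g.\ in Remarks \ref{rem:Qlocaltranslie} and \ref{rem:Qlocaltranscourant}). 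The one point you flagged as delicate --- identifying the degree-one component of $\pi^*$ with $\Xi^{-1}\circ\rho^\vee$ through the fiber-product construction of $\cE$ and the identification $C_1(\cE)\cong\Gamma(E)$ --- you handle correctly, and it is indeed the only place where the structure of $\cE$ (as opposed to pure derived-bracket formalism) enters.
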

}
\begin{proof}
Our proof consists of an explicit computation in coordinates. Choose coordinates $x_i$ on an open subset $U$ of $M$, 
giving rise to canonical coordinates $(q_i,p_i)$ on $T^*[2]U$.  Choose a frame $e_a$ of $E|_U$ such that $\langle e_a,e_b \rangle_E=:g_{ab}=\text{constant}$. Then, using the pairing to identify $E$ and $E^*$, this frame gives rise to coordinates $\xi^a$ on the fibers of $E[1]$. Altogether we have coordinates $(q_i,p_i,\xi^a)$ on $\cE$ (over $U$) of degrees $0,2,1$ respectively. According to \cite[Proof of Thm. 4.5]{Dima}, the symplectic form $\Omega$ and the degree 3 function $H$ are
\begin{align*}
\Omega&=dp_idq_i+\frac{1}{2}g_{ab}(x)d\xi^ad\xi^b\\
H&=\rho_{a}^i(x)\xi^ap_i - \frac{1}{6}\phi_{abc}(x)\xi^a\xi^b\xi^c,
\end{align*}
 where $\rho(e_a)=\rho_a^i\partial x_ i$ encodes the anchor and $\phi_{abc}:=\langle [e_a,e_b]_E, e_c\rangle_E$ the bracket. Unless specified otherwise, we use the Einstein summation convention, with $i$ ranging from $1$ to $N:=dim(M)$ and $a$ ranging from $1$ to $rank(E)=2N+rank(\g_M)$.

Under the assumption that the $\rho_a^i$ are constant, we compute that:
$$Q=\{H,\ \}=\rho_a^ig^{-1}_{ab}p_i\frac{\partial}{\partial \xi^b}+\rho_a^i\xi^a\frac{\partial}{\partial q_i}
+\frac{1}{6}(\phi_{abc})_i\xi^a\xi^b\xi^c\frac{\partial}{\partial p_i}+ C \phi_{abc}\xi^a\xi^b  g^{-1}_{cd} \frac{\partial}{\partial \xi^d}
$$
where $g^{-1}$ denotes the inverse matrix  to $g$,
$(\phi_{abc})_i$ denotes the partial derivative of $\phi_{abc}$ w.r.t. $q_i$ and $C$ is a constant.

Now we refine the choice of the frame $e_a$ of $E|_U$. As described in \S \ref{sec:isotropicsplittings}  we have an isomorphism
$E \cong TM\oplus \g_M\oplus  T^*\!M$ under which the anchor and pairing are as described after eq. \eqref{eq:dissection}. 
Take the frame  of $E|_U$ given by $e_i=\frac{\partial}{\partial x_i}, e_{N+i}=dx_i$
for $i\le N$, 
 and the remaining $e_a$'s to be sections of $\g_M$ for which the pairing of $\g_M$ is constant (denote by $\star$ the matrix consisting of the pairings of the latter).
Notice that $\rho_a^i=\delta_a^i$ is the Kronecker delta {(in particular constant)}, and in block form $G=\left(\begin{array}{c|c|c}0 & 1 & 0 \\\hline 1 & 0 & 0 \\\hline 0 & 0 & \star\end{array}\right)$.
The projection $\cE\to T[1]M$ in coordinates reads  $(q_i,p_i,\xi^a)\mapsto ( q_i, \{\xi^a\}_{a\le N})$,     so its  derivative sends $Q$ to
$$\sum_{b=1}^N\rho_a^ig^{-1}_{ab}p_i\frac{\partial}{\partial \xi^b}+\rho_a^i\xi^a\frac{\partial}{\partial q_i}
+ C \sum_{d=1}^N\phi_{abc}\xi^a\xi^b  g^{-1}_{cd} \frac{\partial}{\partial \xi^d}.
$$
To conclude the proof, we observe the following:

\begin{enumerate}
\item The first summand vanishes, since for the range of $b$ considered there we have $g^{-1}_{ab}\neq 0$ only for $N<a\le 2N$, and for those values of $a$ we have $\rho_a^i=0$
\item The second summand equals $\sum_{a\le N}\xi^a \partial_{ q_a}$, \emph{i.e.} the de Rham vector field on $T[1]M$
\item The first summand vanishes, as follows. For the range of $d$ considered there we have $g^{-1}_{cd}\neq 0$ only for $N<c\le 2N$.  For those values of $c$, the expression
$\phi_{abc}=\langle [e_a,e_b]_E, e_c\rangle_E$ vanishes since the tangent component of 
$[e_a,e_b]_E$ vanishes (recall that the Lie bracket of coordinate vector fields vanishes).
\end{enumerate}
\end{proof}

\section{Transitive Courant algebroids and homological vector fields }\label{sec:CAhom} 

\addtocontents{toc}{\protect\mbox{}\protect}
 
{As in the previous section,   we consider transitive Courant algebroids.
In \S \ref{sec:related}  
we construct by hand an   $L_{\infty}$-action and use the implication ``$(2)\rightarrow (1)$'' in 
Thm. \ref{charvf} (the opposite implication to the one used in \S \ref{sec:CA})
 to obtain a homological vector field on a certain N-manifold $\cN$, extending a construction that is known for exact Courant algebroids \cite{s:funny}\cite{UribeDGman}. {In \S \ref{sec:r2} we show that $\cN$ is actually an $\RR[2]$-principal bundle and use it to recover the  {characteristic class}  \cite{ChenStienonXu} of the transitive Courant algebroid.}
 In \S
\ref{subsec:compare} we show that this construction is related {by an obvious  projection} to the one carried out for transitive Lie algebroids in \S \ref{sec:algebroids}.  
}

\subsection{Transitive Courant algebroids: {extending} {\v{S}}evera's and Uribe's approach}
\label{sec:related}

Let $E$ be an transitive Courant algebroid over $M$.  {As seen in \S  \ref{sec:isotropicsplittings}, there is an associated bundle of quadratic Lie algebras
$\g_M$. In this subsection we define an $L_{\infty}$-action of $TM$ on a Q-manifold associated to $\g_M$, and use it to construct a homological vector field on  $T[1]M{\oplus} (\RR_M[2]\oplus {\g_M}[1])$, see  Thm. \ref{thm:tottransCA}.}
 
{The following lemma, whose proof we omit, associates a DGLA $\WDGLA(\g_M,\la\ ,\ \ra_{\g_M})$ to a bundle of quadratic Lie algebras.} 
\begin{lemma} \label{lem:W-dgla-struct:transitivecourant}
Let $\g_M\to M$ be a bundle of quadratic Lie algebras with bracket $[\ ,\ ]_{\g_M}$ and non-degenerate pairing $\la\ ,\ \ra_{\g_M}$. We denote by:
\begin{itemize}
\item $\RR_M$ the trivial line bundle over $M$, $\RR_M:=\RR\times M$,
\item  $\der(\g_M)$ the derivations of $\g_M$ as a bundle of quadratic Lie algebras, namely the space of covariant differential  operators $D:\Gamma(\g_M)\to \Gamma(\g_M)$  such that:
\begin{align*}
 D[\eta,\xi]_{\g_M}&=[D\eta,\xi]_{\g_M}+[\eta,D\xi]_{\g_M}, \\
 \Lie_{X_D}\langle \eta,\xi\rangle_{\g_M}&= \langle D\eta, \xi\rangle_{\g_M}+\langle \eta,D\xi\rangle_{\g_M}, 
\end{align*}
where $X_D$ is the symbol of $D$.
\end{itemize}
Then there is a $DGLA$, denoted by $\WDGLA(\g_M,\la\ ,\ \ra_{\g_M})$  with grading given by:
\begin{equation}\label{eq:DGLAtran}
\WDGLA_{-2}=\Gamma(\RR_M),\quad 
\WDGLA_{-1}=\Gamma(\g_M),\quad 
\WDGLA_0:=\der(\g_M),
\end{equation}
and whose differential and bracket are defined by:
\begin{align*}
\textbf{d}f &:= 0, &
[ D , D']&= D\circ D'-D'\circ D,&
[ D , \mu]&= D(\mu),\\
(\textbf{d} \mu)(\nu)&:=-[\mu,\nu]_{\g_M},&
[ \mu,\nu ]&= \langle\mu,\nu\rangle_{\g_M},& [ D, f ] &= \Lie_{X_D}(f), 
\end{align*}
for all $f \in \WDGLA_{-2}$, $\mu,\nu \in \WDGLA_{-1}$ and $D,D'\in \WDGLA_0$, and where $X_D\in \Gamma(TM)$ is the symbol of $D$.
\end{lemma}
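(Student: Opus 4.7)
The plan is to verify the DGLA axioms directly, using only the defining properties of a bundle of quadratic Lie algebras: the Jacobi identity for $[\cdot,\cdot]_{\g_M}$, the ad-invariance of $\langle\cdot,\cdot\rangle_{\g_M}$, and the Leibniz rule for covariant differential operators. An alternative would be to realize $\WDGLA(\g_M,\langle\cdot,\cdot\rangle_{\g_M})$ as a sub-DGLA of $\vX$ of an appropriate degree $2$ symplectic $Q$-manifold associated to the quadratic Lie algebra bundle (in the spirit of Rem. \ref{rem:subdgexac}), but for the present lemma the direct check is both quicker and more transparent.

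First I would verify that the differential is well-defined. For $\mu\in\WDGLA_{-1}=\Gamma(\g_M)$, the operator $\textbf{d}\mu=-[\mu,\cdot]_{\g_M}$ is $C^\infty(M)$-linear (its symbol vanishes), hence is a CDO; it is a derivation of $[\cdot,\cdot]_{\g_M}$ by the Jacobi identity in $\g_M$ and preserves the pairing by its invariance, so indeed $\textbf{d}\mu\in\der(\g_M)=\WDGLA_0$. Since $\WDGLA_i=0$ for $i>0$, the differential is automatically zero on $\WDGLA_0$, and $\textbf{d}^2=0$ holds for free. Graded skew-symmetry of the bracket is then a matter of bookkeeping: the commutator on $\WDGLA_0$ is skew; the pairing $\langle\mu,\nu\rangle_{\g_M}$ being symmetric matches the sign $(-1)^{(-1)(-1)+1}=+1$ required in degrees $(-1,-1)$; and the conventions $[\mu,D]:=-D(\mu)$, $[f,D]:=-\Lie_{X_D}(f)$ fix the remaining cases, with all brackets of total degree $<-2$ vanishing by $\WDGLA_{<-2}=0$.

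The core step is the graded Jacobi identity, which I would check case by case on triples of homogeneous elements. For $(D,D',D'')$ it is the usual Jacobi identity for the commutator bracket. For $(D,D',\mu)$ it amounts to the fact that a commutator of CDOs acts as a derivation, which is immediate. The case $(D,\mu,\nu)$ reduces \emph{exactly} to the condition $\Lie_{X_D}\langle\mu,\nu\rangle_{\g_M}=\langle D\mu,\nu\rangle_{\g_M}+\langle\mu,D\nu\rangle_{\g_M}$ built into the definition of $\der(\g_M)$, and the case $(\mu,\nu,\xi)$ reduces to the Jacobi identity for $[\cdot,\cdot]_{\g_M}$ after unfolding $\textbf{d}\mu$. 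All remaining triples, such as $(D,\mu,f)$, $(\mu,\nu,f)$, or any triple involving two elements of $\WDGLA_{-2}$, are automatic since some iterated bracket lands in a degree where $\WDGLA$ is zero.

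Finally I would verify the Leibniz rule $\textbf{d}[a,b]=[\textbf{d}a,b]+(-1)^{|a|}[a,\textbf{d}b]$. The only case requiring a computation is $(\mu,\nu)$: the left-hand side vanishes by $\textbf{d}f=0$, and the right-hand side reads $[\textbf{d}\mu,\nu]-[\mu,\textbf{d}\nu]=-[\mu,\nu]_{\g_M}+[\nu,\mu]_{\g_M}=0$. The case $(D,\mu)$ unfolds as $\textbf{d}(D\mu)(\nu)=-[D\mu,\nu]_{\g_M}$ versus $[D,\textbf{d}\mu](\nu)=-D[\mu,\nu]_{\g_M}+[\mu,D\nu]_{\g_M}$, which match by the fact that $D$ derives the $\g_M$-bracket. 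The main obstacle, modest as it is, will be keeping signs consistent between the ``action'' convention $[D,\mu]=D(\mu)$ and the ``skew'' convention $[\mu,D]=-D(\mu)$; once these are fixed uniformly at the outset, everything else is bookkeeping.
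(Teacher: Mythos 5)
The paper explicitly omits the proof of this lemma, so your direct verification of the DGLA axioms is exactly the intended ``routine'' argument, and its overall structure is sound: the only nontrivial inputs are the Jacobi identity of $[\ ,\ ]_{\g_M}$ (to show $\textbf{d}\mu=-[\mu,\cdot]_{\g_M}$ lands in $\der(\g_M)$), the invariance of the pairing (for the triple $(D,\mu,\nu)$), and the derivation property of $D\in\der(\g_M)$ (for the Leibniz rule on $[D,\mu]$). Three small corrections are needed. First, your displayed check of the Leibniz rule on a pair $(\mu,\nu)$ has a sign slip: with the convention $[\mu,D]=-D(\mu)$ one gets $[\mu,\textbf{d}\nu]=-(\textbf{d}\nu)(\mu)=[\nu,\mu]_{\g_M}=-[\mu,\nu]_{\g_M}$, so the right-hand side is $[\textbf{d}\mu,\nu]-[\mu,\textbf{d}\nu]=-[\mu,\nu]_{\g_M}+[\mu,\nu]_{\g_M}=0$; the expression you wrote, $-[\mu,\nu]_{\g_M}+[\nu,\mu]_{\g_M}$, equals $-2[\mu,\nu]_{\g_M}$ and is not zero. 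Second, the Jacobi identity for a triple $(\mu,\nu,\xi)$ of elements of $\WDGLA_{-1}$ does not ``reduce to the Jacobi identity of $[\ ,\ ]_{\g_M}$'': since $[\nu,\xi]=\la\nu,\xi\ra_{\g_M}\in\WDGLA_{-2}$, every iterated bracket in that identity lands in $\WDGLA_{-3}=0$, so the case is trivially $0=0$; the $\g_M$-Jacobi identity is consumed entirely in your first step (well-definedness of $\textbf{d}$). Third, the triple $(D,D',f)$ with $f\in\WDGLA_{-2}$ is absent from your case list and is \emph{not} killed by degree reasons: all its terms lie in $\WDGLA_{-2}$, and the identity amounts to $X_{[D,D']}=[X_D,X_{D'}]$, i.e. that the symbol map is a Lie algebra morphism --- standard for covariant differential operators, but it must be invoked. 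With these repairs the verification is complete.
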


\begin{prop}\label{prop:tr-CASE-Loo-map}
Let $E$ be a transitive Courant algebroid over $M$, and $\g_M:=\ker \rho/\ker \rho^\perp$  the associated bundle of quadratic Lie algebras as in \S \ref{sec:isotropicsplittings}.

 Then any isotropic  splitting $\sigma:TM\to E$ of the anchor $\rho$ induces a  $ C^\infty(M) $-linear   
  $L_{\infty}$ morphism:
\begin{equation}\label{eq:loomapCA}
F: \Gamma(TM)\leadsto \WDGLA(\g_M,\la\ ,\, \ra_{\g_M}),
\end{equation}
where $\WDGLA(\g_M,\la\ ,\ \ra_{\g_M})$ is the DGLA of Lemma \ref{lem:W-dgla-struct:transitivecourant}
\end{prop}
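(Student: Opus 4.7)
The plan is, in close analogy with the proof of Prop. \ref{prop:liclasstransLA} (its counterpart for Lie algebroid extensions), to read off the components of $F$ from classical tensors extracted from the isotropic splitting $\sigma$ and the induced splitting $\lambda_\sigma \colon \g_M \to \ker\rho$ recalled in \S\ref{sec:isotropicsplittings}. Using the decomposition $E \cong TM \oplus \g_M \oplus T^*\!M$ of eq.~\eqref{eq:dissection}, I would define a $TM$-connection $\nabla$ on $\g_M$ by $\lambda_\sigma(\nabla_u \mu) \equiv [\sigma(u), \lambda_\sigma(\mu)]_E \pmod{T^*\!M}$, a tensor $R \in \Omega^2(M, \g_M)$ by $\lambda_\sigma(R(u,v)) \equiv [\sigma(u), \sigma(v)]_E \pmod{T^*\!M}$, and a $3$-form $H \in \Omega^3(M)$ by $H(u,v,w) := \la [\sigma(u), \sigma(v)]_E, \sigma(w) \ra_E$. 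Then I would set $F_1 := \nabla$, $F_2 := R$, $F_3 := H$, and $F_k := 0$ for $k \ge 4$; by construction each $F_k$ lands in $\WDGLA_{1-k}$. The $C^\infty(M)$-linearity of all components follows directly from the Courant axioms and the isotropy of $\sigma$, while the fact that $\nabla_u$ genuinely lies in $\der(\g_M)$, i.e.\ preserves both $[\ ,\ ]_{\g_M}$ and $\la\ ,\ \ra_{\g_M}$, follows from the Leibniz rule for $[\ ,\ ]_E$ together with the invariance of $\la\ ,\ \ra_E$ under $[\ ,\ ]_E$.

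Since only $F_1, F_2, F_3$ are nonzero, only the instances of \eqref{eq:Aloo} for $n \in \{2, 3, 4\}$ need to be checked. For $n = 2$, using $\textbf{d} R = -\ad^{\g_M} \circ R$ read off from Lemma \ref{lem:W-dgla-struct:transitivecourant}, the equation reduces to the familiar identity $\nabla_{[u,v]} - [\nabla_u, \nabla_v] = -\ad^{\g_M} \circ R(u,v)$, arising from the $\g_M$-projection of the Jacobiator for $[\ ,\ ]_E$ evaluated on $\sigma(u), \sigma(v), \lambda_\sigma(\mu)$. For $n = 3$, since $\textbf{d} H = 0$ by definition of the DGLA, the equation becomes the Bianchi-type identity $\oint_{u,v,w} \bigl(\nabla_u R(v,w) - R([u,v], w)\bigr) = 0$, obtained by $\g_M$-projecting the Jacobiator applied to $\sigma(u), \sigma(v), \sigma(w)$.

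The crucial new ingredient, absent in the Lie algebroid case of Prop. \ref{prop:liclasstransLA}, is the $n = 4$ equation. It is nontrivial precisely because the DGLA bracket on $\WDGLA_{-1} = \Gamma(\g_M)$ pairs two sections into an element $\la\ ,\ \ra_{\g_M} \in \WDGLA_{-2}$. Writing out \eqref{eq:Aloo} for $n = 4$ and using $F_4 = 0$, the surviving contributions produce a ``Pontryagin-type'' multilinear identity schematically of the form $\d H \sim \sum_{\tau \in Sh(2,2)} (-1)^\tau \la R, R \ra_{\g_M}$, which is the classical structure equation characterizing transitive Courant algebroids; on the Courant side it is obtained by pairing the Jacobiator of $\sigma(u), \sigma(v), \sigma(w)$ with $\sigma(x)$ and invoking invariance of $\la\ ,\ \ra_E$ under $[\ ,\ ]_E$. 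I expect the main obstacle to be the combinatorial and sign bookkeeping for this $n = 4$ case: the shuffle signs in \eqref{eq:Aloo}, combined with those from the $[F_1, F_3]$-type cross terms, must be shown to match exactly those produced by the fourfold Jacobi-plus-invariance computation on $E$.

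A conceptual alternative to this shuffle computation would be to derive $F$ by restriction from Prop. \ref{prop:CAroy}: its $L_\infty$-action on $A^*[1] \oplus T^*[2]M$ should pull back, along an embedding $\WDGLA(\g_M, \la\ ,\ \ra_{\g_M}) \hookrightarrow \bigl(\vX(A^*[1] \oplus T^*[2]M), -[Q_{\cM}, \ ], [\ ,\ ]\bigr)$ in the spirit of Remark \ref{rem:subdgexac}, to the $L_\infty$-morphism $F$ of the statement. This would bypass the direct shuffle bookkeeping at the cost of identifying the appropriate sub-DGLA, and I would pursue it mainly as a cross-check.
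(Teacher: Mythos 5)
Your proposal follows essentially the same route as the paper: extract $\nabla$, $\omega$ (your $R$) and $H$ from the isotropic splitting, verify the classical structure equations \eqref{eq:TCAstruct2}--\eqref{eq:TCAstruct5} (which the paper simply cites from Chen--Sti\'enon--Xu rather than rederiving), and match them term by term with the explicit $L_\infty$-morphism conditions \eqref{eq:explicit1}--\eqref{eq:explicit3} of Remark \ref{rem:explicit-Linfty-morphism}, the $n=4$ instance being exactly the identity $\d H=\la\omega\wedge\omega\ra_{\g_M}$ that you call Pontryagin-type. The only discrepancies are cosmetic: your $R(u,v)$ should be defined as the $\g_M$-component of $[\sigma(u),\sigma(v)]_E-\sigma([u,v]_{TM})$ (your congruence modulo $T^*\!M$ does not literally hold, since $[\sigma(u),\sigma(v)]_E$ retains a $\sigma(TM)$-component), and the paper's $H$ carries an extra factor $\tfrac{1}{2}$ so that \eqref{eq:TCAstruct5} comes out with the normalization of the pairing fixed in \eqref{eq:dissection}.
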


\begin{proof}
{
 Similarly to the case of a transitive Lie algebroid, the choice of an {isotropic} splitting of a Courant algebroid induces a $TM$-connection $\nabla$ on the bundle of quadratic Lie algebras $\g_M$, a $\g_M$-valued $2$-form $\omega\in \Omega^2(M)\otimes \Gamma(\g_M)$, together with a $3$-form $H\in \Omega^3(M)$, respectively defined by:
\begin{align*}
 \nabla_a (\mu\boldsymbol{+(\mathrm{ker}\rho)^\perp})&:=[\sigma(a),\mu]_E\boldsymbol{+(\mathrm{ker}\rho)^\perp} ,\\
 \omega(a,b)&:=[\sigma(a),\sigma(b)]_E-\sigma([a,b]_{TM}) \boldsymbol{+(\mathrm{ker}\rho)^\perp},\\
 H(a,b,c)&:=\frac{1}{2}\bigl\la pr_{T^*\!M}[\sigma(a),\sigma(b)]_E, c \bigr\ra,
\end{align*}
where $a,b,c\in\Gamma(TM)$, $\mu\in \Gamma(\ker \rho)$. Here, given $\alpha \in \ker\rho$, we denoted by $\alpha\boldsymbol{+(\mathrm{ker}\rho)^\perp}\in \Gamma(\g_M)$ its equivalence class modulo $(\ker\rho)^\perp$. We also denoted by $pr_{T^*\!M}$ the projection induced by the decomposition \eqref{eq:dissection}. Note that our definition of $H\in \Omega^3(M)$ coincides with that of \cite{ChenStienonXu} despite the constant term $1/2$  appearing above, this is due to our choice of 
convention in \eqref{eq:dissection}.

As a consequence of the axioms of a Courant algebroid, the following compatibility conditions are satisfied:
\begin{align}
\nabla_a[\eta,\xi]_{\g_M}                     
     &=[\nabla_a \eta,\xi]_{\g_M}+[\eta,\nabla_a \xi]_{\g_M},
\label{eq:TCAstruct2}\\
\mathcal{L}_a \la \eta,\xi \rangle_{\g_M}
     &=\la \nabla_a \eta, \xi \ra_{\g_M}+ \la \eta ,\nabla_a \xi \ra_{\g_M},
\label{eq:TCAstruct1}\\
0
     &=\oint_{a,b,c} \nabla_a\, {(\omega(b,c))}-\omega([a,b]_{TM},c),
     \label{eq:TCAstruct3}\\
C_\nabla                                 
     &=\ad^{\g_M}\circ\omega,
\label{eq:TCAstruct4}\\
\d H
     &=\la \omega_\wedge\omega\ra_{\g_M}.
     \label{eq:TCAstruct5}
\end{align}
Here, in \eqref{eq:TCAstruct4} the curvature of $\nabla$ is denoted by $C_{\nabla}:=[\nabla,\nabla]-\nabla_{[\ ,\ ]}$,  in \eqref{eq:TCAstruct5} the de Rham differential is denoted by $\d$, while  in \eqref{eq:TCAstruct5} the term  
$\la \omega_\wedge\omega\ra_{\g_M}\in \Omega^4(M)$ is obtained by pairing $\omega$ with itself by means of $\la\ ,\ \ra_{\g_M}$, more precisely:
\begin{align*}
\la\omega_\wedge\omega\ra_{\g_M}(a_1,a_2,a_3,a_4):=\frac{1}{4}\sum_{\sigma\in S_4} (-1)^\sigma
\left\la \omega(a_{\sigma(1)},a_{\sigma(2)})\,,\,\omega(a_{\sigma(3)},a_{\sigma(4)})\right\ra_{\g_M}
\end{align*}
for any $a_1,\dots,a_4\in \Gamma(TM)$. The above equations are rather well-known (see  \cite{ChenStienonXu, Hekmati}, also \cite{Bressler}) and follow from straightforward computations, we shall refer the reader to \cite[Prop. 2.2]{ChenStienonXu} for more details as to how they can be obtained. As we now explain, they can be interpreted in terms of $L_\infty$-morphisms.

Let $\WDGLA(\g_M,\la\ ,\ \ra_{\g_M})$ be the DGLA of Lemma \ref{lem:W-dgla-struct:transitivecourant}, we define a map  $F:\Gamma(TM)\leadsto \WDGLA(\g_M,\la\ ,\ \ra_{\g_M})$ whose components are given as follows:
\begin{align*}
F_1 =\nabla&:\Gamma(\wedge^1 TM) \to \WDGLA_{0},\\
F_2 =\omega\,&:\Gamma(\wedge^2 TM) \to \WDGLA_{-1},\\
F_3 =H&:\Gamma(\wedge^3 TM)\to \WDGLA_{-2}.
\end{align*}
The map $F_1$ takes values in $\WDGLA_0$ by eq. \eqref{eq:TCAstruct2} and \eqref{eq:TCAstruct1}. Furthermore, the equations \eqref{eq:TCAstruct3},  {\eqref{eq:TCAstruct4}} and \eqref{eq:TCAstruct5} are exactly the conditions
{\eqref{eq:explicit2}, \eqref{eq:explicit1} and \eqref{eq:explicit3}}
 for $F$ to define an $L_\infty$-morphism (see Remark \ref{rem:explicit-Linfty-morphism}  in Appendix \ref{app:li}). Obviously, $F$ is $C^\infty(M)$-linear since each of its components is.
}
\end{proof}

\begin{remark}
{Notice that the  $L_{\infty}$-morphism given in Prop. \ref{prop:tr-CASE-Loo-map} allows us to reconstruct $E$ up to Courant algebroid isomorphism, since  its components allow us to express the bracket on $E$ (see \cite{ChenStienonXu}).}
\end{remark}

{Given a quadratic a Lie algebra $\g$, recall that (a version of) the Lie 2-algebra associated by Roytenberg-Weinstein \cite{rw}
is $\RR[1]\oplus \g$, with the only non-trivial multibrackets being the Lie bracket $[\ ,\ ]_{\g}$ on $\g$ and the trinary bracket ${\frac{1}{2}}\langle [\ ,\ ]_{\g},\  \rangle_{\g}\colon \wedge^3 \g \to \RR$. Shifting by one we obtain an $L_{\infty}[1]$-algebra. We denote by $Q_{RW}$ the corresponding homological vector field. Hence,
$$(\g[1]\oplus \RR[2], Q_{RW})$$ is a Q-manifold.
}

{
As earlier, let $M$ be a manifold, and consider  {a bundle} of quadratic Lie algebras $\g_M$ over $M$. Consider the Q-manifold 
\begin{equation}\label{eq:qrw}
(\cM, Q_{\cM}):=( \g_M[1]\oplus \RR_M[2], Q_{RW}),
\end{equation} 
where  $\RR_M:=\RR\times M$ {and $Q_{RW}$ denotes the vertical homological vector field which on each fiber is given as above.}

\begin{lemma}\label{lem:subdgla} 
{Let $\g_M\to M$ be a bundle of quadratic Lie algebras.}
There is a  $C^\infty(M)$-linear  injective morphism of DGLAs:
$$ J:\WDGLA(\g_M,\la\ ,\ \ra_{\g_M})
\ {\lhook\joinrel\relbar\joinrel\rightarrow}\ 
(\vX(\cM), -[Q_{\cM}, \ ], [\ ,\ ])$$ where $\WDGLA(\g_M,\la\ ,\ \ra_{\g_M})$  is the DGLA defined in Lemma \ref{lem:W-dgla-struct:transitivecourant}. 
\end{lemma}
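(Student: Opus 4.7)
The plan is to build $J$ explicitly in a local frame and verify the DGLA axioms by direct polynomial computation in coordinates, then patch the local constructions. First I would fix, over an open $U\subseteq M$, a local frame $\{e_a\}$ of $\g_M$, inducing fiberwise linear coordinates $\xi^a$ of degree $1$ on $\g_M[1]$ and a degree-$2$ fiberwise coordinate $t$ on $\RR_M[2]$; write $c^a_{bc}$ for the structure constants of $[\cdot\,,\cdot]_{\g_M}$, $g_{ab}:=\langle e_a,e_b\rangle_{\g_M}$, and $K_{abc}:=g_{ae}c^e_{bc}$, which is totally antisymmetric by invariance of the pairing. In these coordinates the Roytenberg--Weinstein homological vector field takes the explicit polynomial form
\[
Q_{\cM}=-\tfrac{1}{2}\,c^a_{bc}\,\xi^b\xi^c\,\tfrac{\partial}{\partial\xi^a}+\tfrac{1}{6}\,K_{abc}\,\xi^a\xi^b\xi^c\,\tfrac{\partial}{\partial t},
\]
whose two summands encode the binary bracket on $\g_M$ and the ternary bracket $\tfrac{1}{2}\langle[\cdot\,,\cdot]_{\g_M},\cdot\rangle_{\g_M}$ respectively.

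Next I would define the candidate morphism $J$ piece by piece. For $f\in C^\infty(M)$, $\mu=\mu^ae_a\in\Gamma(\g_M)$ and $D\in\der(\g_M)$ with $D(e_b)=D^a_be_a$ and symbol $X_D$, set
\[
J(f):=f\,\tfrac{\partial}{\partial t},\qquad
J(\mu):=\mu^a\,\tfrac{\partial}{\partial\xi^a}+\tfrac{1}{2}\,g_{ab}\,\mu^a\xi^b\,\tfrac{\partial}{\partial t},\qquad
J(D):=X_D+D^a_b\,\xi^b\,\tfrac{\partial}{\partial\xi^a}.
\]
The $\tfrac{1}{2}g_{ab}\mu^a\xi^b\,\partial/\partial t$ correction in $J(\mu)$ is necessary: without it, the graded commutator $[\iota_\mu,\iota_\nu]$ vanishes, whereas in $\WDGLA$ one has $[\mu,\nu]=\langle\mu,\nu\rangle_{\g_M}\neq 0$ in general. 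I would also check that these formulae are independent of the local frame $\{e_a\}$ (using that the matrix of a derivation and the matrix of the pairing transform in compensating ways), so that $J$ extends globally.

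Then I would verify that $J$ is a DGLA morphism by matching polynomials in the coordinates. The identities $[J(D),J(D')]=J([D,D'])$, $[J(D),J(\mu)]=J(D\mu)$ and $[J(D),J(f)]=J(\Lie_{X_D}f)$ follow directly from the definitions and the derivation property. The bracket $[J(\mu),J(\nu)]=J(\langle\mu,\nu\rangle_{\g_M})$ is a short calculation in which the $\partial/\partial\xi$-piece of $J(\mu)$ differentiates the $\partial/\partial t$-correction in $J(\nu)$ (and vice versa), yielding $g_{ab}\mu^a\nu^b\,\partial/\partial t$. For the differential, $-[Q_{\cM},J(f)]=0=J(\mathbf{d}f)$ is immediate since every monomial in $Q_{\cM}$ has zero $t$-derivative. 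The key identity is $-[Q_{\cM},J(\mu)]=J(\mathbf{d}\mu)=-J(\ad^{\g_M}_\mu)$: in the $\partial/\partial\xi$-direction, the quadratic part of $Q_{\cM}$ bracketed with $\mu^a\partial/\partial\xi^a$ produces $\ad^{\g_M}_\mu$ up to sign; in the $\partial/\partial t$-direction, the contributions from bracketing the cubic part of $Q_{\cM}$ against $\mu^a\partial/\partial\xi^a$ and the quadratic part of $Q_{\cM}$ against the correction term of $J(\mu)$ must cancel, which uses the total antisymmetry of $K_{abc}$. Finally, $-[Q_{\cM},J(D)]=0=J(\mathbf{d}D)$ is equivalent, after expansion, to the two defining conditions of $\der(\g_M)$: vanishing of the $\xi^b\xi^c\,\partial/\partial\xi^a$-coefficient encodes that $D$ is a derivation of the Lie bracket, while vanishing of the $\xi^a\xi^b\xi^c\,\partial/\partial t$-coefficient encodes that $D$ preserves the pairing.

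Injectivity is transparent, since the three components of $J$ land in vector fields of pairwise distinct degrees on $\cM$ and within each degree the leading polynomial coefficient faithfully recovers the argument; $C^\infty(M)$-linearity is manifest from the formulae. The main obstacle is essentially bookkeeping: calibrating signs and scalar factors so that the two identities involving $\partial/\partial t$-components -- namely $[J(\mu),J(\nu)]=J(\langle\mu,\nu\rangle_{\g_M})$ and the $\partial/\partial t$-part of $-[Q_{\cM},J(\mu)]=J(\mathbf{d}\mu)$ -- hold simultaneously with the chosen normalization of $Q_{RW}$. Both identities ultimately hinge on the total antisymmetry of $K_{abc}$, which is the algebraic reflection of the fact that $\g_M$ is a bundle of \emph{quadratic} Lie algebras.
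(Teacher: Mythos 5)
Your proposal is correct and follows essentially the same route as the paper: the map $J$ you define (including the crucial correction term $\tfrac{1}{2}\langle\mu,\cdot\rangle_{\g_M}\,\partial_t$ in degree $-1$, the diagonal inclusion of $\der(\g_M)$ in degree $0$, and $f\mapsto f\,\partial_t$ in degree $-2$) is exactly the paper's $J$, and the verification by identifying $\vX_{\le 0}(\cM)$ in coordinates and checking the brackets and differential term by term is the same argument, only written more explicitly in a frame. The only caveat is the one you already flag yourself: the coefficient of the cubic term of $Q_{RW}$ must be normalized consistently with the ternary bracket $\tfrac{1}{2}\langle[\cdot\,,\cdot]_{\g_M},\cdot\rangle_{\g_M}$ (your stated $\tfrac{1}{6}K_{abc}$ is off by a factor and a sign from the paper's conventions in Remark \ref{rem:Qlocaltranscourant}), but this is pure bookkeeping and does not affect the structure of the proof.
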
 
}

\begin{proof} 
{ Choosing local coordinates $x_i$ on $M$, linear coordinates $\xi_a$ on {the fibers of $\g_M[1]$} and $r$ on $\RR[2]$ (of degrees $0$, $1$ and $2$ respectively) one sees that the vector fields on $\cM$ in low degrees are given by:
\begin{align*}
\vX_{-2}(\cM)=\{f\partial_r:f\in C^{\infty}(M)\}&= C^{\infty}(M)\otimes \Span\{\partial_r\},\\
\vX_{-1}(\cM)=\{f_a\partial_{\xi_a}+g_a\xi_a\partial_r: f_a,g_a\in C^{\infty}(M)\}&= \Gamma(\g_M)\oplus \Gamma(\g_M^*)\otimes \Span\{\partial_r\},\\
\vX_{0}(\cM)=\{f_i\partial_{x_i}+g_{ab}\xi_a\partial_{\xi_b}+(hr+k_{ab}\xi_a\xi_b)\partial_r\}
&= \CDO(\g_M)\oplus (C^{\infty}(M)\oplus \Gamma(\wedge^2\g_M^*))\otimes \Span\{\partial_r\}.
\end{align*}
Define $J$ to be the injective linear map
which in degree $-2$ is $f\mapsto f\partial_r$,
in degree $0$ is the natural inclusion $\der(\g_M)\hookrightarrow \Gamma(\CDO(\g_M))$, and in degree $-1$ is given by
 $$J_{-1}\colon \Gamma(\g_M)\to \Gamma(\g_M\oplus \g_M^*),\quad v\mapsto v+{\frac{1}{2}}\langle v,\  \rangle_{{\g_M}}\partial_r.$$ 
 The injection  $J$ obtained this way is $C^\infty(M)$-linear.  We claim that $J$ is a morphism of DGLAs, or equivalently that its image is a sub-DGLA and that the induced differential and bracket correspond to the given ones. Notice that 
$Q_{\cM}$ is a vertical vector field {(see Rem. \ref{rem:Qlocaltranscourant} for an expression in coordinates)} and $\vX_{\le -1}(\cM)$ consist of vertical vector fields, so operations involving only these vector fields can be done separately on each fiber of $\cM$.
}

{
The differential $-[Q_{\cM}, \ ]$ annihilates $\vX_{-2}(\cM)$ and a computation shows that it maps $J_{-1}(v)\in \vX_{-1}(\cM)$ to  
$-[v,\ ]_{\g_M}$ for all $v\in \Gamma(\g_M)$. {The differential vanishes on the image of $J_0$, \emph{i.e.}}  $\der(\g_M)\subset \{X\in \Gamma(\CDO(\g_M)): [Q_{\cM}, X]=0\}$, since the elements on the r.h.s. are exactly the elements of $\Gamma(\CDO(\g_M))$ that preserve both $[\ ,\  ]_{\g_M}$ and $\langle [\ ,\ ]_{\g_M},\  \rangle_{\g_M}$.
}

{
We now consider the graded Lie bracket.  On $\der(\g_M)\subset \vX_{0}(\cM)$ it is the commutator, since for $A,B\in \der(\g_M)\subset\vX_{0}(\cM)$ and
$v\in \Gamma(\g_M)\subset\vX_{-1}(\cM)$ we have $$[[A,B],v]=[A,[B,v]]-[B,[A,v]]$$ by the graded Jacobi identity. For the graded Lie bracket of   $A\in \der(\g_M)\subset\vX_{0}(\cM)$ and  $J_{-1}(v)\in \Gamma(\g_M)\subset\vX_{-1}(\cM)$ we have 
$[A,J_{-1}(v)]=J_{-1}(Av)$. To see this, it suffices to compute that 
$[A,\langle v,\  \rangle_{\g_M}\partial_r]=\langle Av,\  \rangle_{\g_M}\partial_r$. 
To do so, use the fact that $A$ is derivation of the pairing
to show that $A^*\langle v,\  \rangle_{\g_M}=\langle Av,\ \rangle_{\g_M}$
(where the dual covariant differential operator $A^*\in \CDO(\g_M^*)$ is defined in \cite[Eq. (2)]{ZZL}), then use   \cite[Lemma 1.6]{ZZL}. Finally, for elements of degree $-1$, we have 
$$[J_{-1}(v),J_{-1}(w)]={\frac{1}{2}}\bigl([v, \langle w,\  \rangle_{\g_M} \partial_r]+
[\langle v,\  \rangle \partial_r, w]\bigr)= \langle v,w \rangle_{\g_M} \partial_r.$$ }
\end{proof}

{We now draw conclusions for transitive Courant algebroids.}
\begin{thm}\label{thm:tottransCA}
Consider a transitive Courant algebroid $E$ over $M$, together with an isotropic splitting $\sigma:TM\to E$ of the anchor $\rho$. Denote by $\g_M:=\ker \rho/(\ker \rho)^\perp$  the associated bundle of quadratic Lie algebras as in \S\ref{sec:isotropicsplittings}.
Let $(\cM, Q_{\cM})$ be as in eq. \eqref{eq:qrw}.

 Then there is an induced homological vector field $Q_{tot}$ defined on 
\begin{equation*}
\cN:=T[1]M\times_M \cM  \cong T[1]M\oplus ( {\g_M}[1]\oplus \RR_M[2])
\end{equation*}
 for which the following is a sequence of $Q$-manifold morphisms:
\begin{equation*}
(\cM,Q_\cM)\to (\cN, Q_{tot}) \to (T[1]M,Q_{dR})
\end{equation*}
\end{thm}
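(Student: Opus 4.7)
The plan is to simply assemble the ingredients already prepared in the preceding two subsections and invoke Thm. \ref{charvfQM}. The strategy consists of composing the $L_{\infty}$-morphism of Prop. \ref{prop:tr-CASE-Loo-map} with the DGLA embedding of Lemma \ref{lem:subdgla}, recognizing the result as (the non-zeroth part of) an $L_{\infty}$-action of $TM$ on $\cM$ via Prop. \ref{prop:liacQ}, and then converting the action into a homological vector field on $\cN$ by means of Thm. \ref{charvfQM}.

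Concretely, I would first form the composition
\[
J\circ F\colon \bigl(\Gamma(TM),[\ ,\ ]_{TM}\bigr)\rightsquigarrow \bigl(\vX(\cM),-[Q_\cM,\ ],[\ ,\ ]\bigr),
\]
where $F$ is the $L_{\infty}$-morphism associated to $\sigma$ by Prop. \ref{prop:tr-CASE-Loo-map} and $J$ is the strict DGLA embedding of Lemma \ref{lem:subdgla}. Since $J$ is a (strict) morphism of DGLAs and is $C^{\infty}(M)$-linear, $J\circ F$ is an $L_{\infty}$-morphism whose components
\[
(J\circ F)_1=J\circ \nabla,\qquad (J\circ F)_2=J\circ \omega,\qquad (J\circ F)_3=J\circ H
\]
are $C^{\infty}(M)$-multilinear.

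Next I would verify the two hypotheses of Prop. \ref{prop:liacQ}. On one hand, the homological vector field $Q_\cM=Q_{RW}$ on $\cM=\g_M[1]\oplus \RR_M[2]$ is built fiberwise over $M$, hence is vertical, so $Q_\cM\in \vX_1^{vert}(\cM)$. On the other hand, since the anchor of the Lie algebroid $TM$ is the identity, condition (ii) of Prop. \ref{prop:liacQ} reduces to the statement that $(J\circ F)_1(a)|_M=a$ for every $a\in \Gamma(TM)$; this holds because $\nabla_a\in \der(\g_M)$ has symbol $a\in \vX(M)$, and the inclusion $\der(\g_M)\hookrightarrow \vX_0(\cM)$ constructed in the proof of Lemma \ref{lem:subdgla} preserves symbols. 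Thus Prop. \ref{prop:liacQ} upgrades $J\circ F$ to an $L_{\infty}$-action of $TM$ on $\cM$ compatible with $Q_\cM$, by appending $F_0:=Q_\cM$ as the zero-th component.

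Finally, applying Thm. \ref{charvfQM} to the Lie algebroid $A=TM$ and the $Q$-manifold $(\cM,Q_\cM)$ yields a homological vector field $Q_{tot}$ on $\cN=T[1]M\times_M \cM$ such that
\[
(\cM,Q_\cM)\xrightarrow{\varepsilon}(\cN,Q_{tot})\to (T[1]M,Q_{dR})
\]
is a sequence of $Q$-manifold morphisms, which is precisely the desired conclusion. There is no real obstacle here, as all the hard work was performed in Prop. \ref{prop:tr-CASE-Loo-map} (checking that the classical identities \eqref{eq:TCAstruct2}--\eqref{eq:TCAstruct5} assemble into an $L_{\infty}$-morphism) and in Lemma \ref{lem:subdgla} (exhibiting $\WDGLA(\g_M,\langle\,,\,\rangle_{\g_M})$ as a sub-DGLA of $\vX(\cM)$); the only point requiring a moment's care is the anchor condition, which is immediate from the fact that $\nabla$ is a genuine $TM$-connection.
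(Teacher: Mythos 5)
Your proposal is correct and follows essentially the same route as the paper: compose the $L_{\infty}$-morphism $F$ of Prop. \ref{prop:tr-CASE-Loo-map} with the DGLA embedding $J$ of Lemma \ref{lem:subdgla}, recognize $J\circ F$ as an $L_{\infty}$-action compatible with $Q_\cM$ via Prop. \ref{prop:liacQ}, and conclude by the implication $(1)\rightarrow(2)$ of Thm. \ref{charvfQM}. The paper states this more tersely, while you additionally spell out the verification of the verticality and anchor hypotheses, which is a harmless elaboration.
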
  
\begin{proof}
From the properties of $F$ {(see eq. \eqref{eq:loomapCA})} and Lemma \ref{lem:subdgla} 
 it follows that $ J\circ F$ is an $L_{\infty}$-action of $TM$ on $\cM$ {compatible with $Q_{\cM}$ (see Prop. \ref{prop:liacQ})}. We conclude using the implication
``$(1)\rightarrow (2)$'' in
Thm. \ref{charvfQM}. 
\end{proof}
The homological vector field $Q_{tot}$ on the product $T[1]M\oplus ( {\g_M}[1]\oplus \RR[2])$ is the sum of $Q_{RW}$ and  the vector field that naturally corresponds to $ J\circ F$ (a component of which is $Q_{dR}$). The vertical part of $Q_{tot}$ is not constant along the fibers of the projection
$T[1]M\oplus ( {\g_M}[1]\oplus\RR_M[2])\to T[1]M$.

\begin{remark}\label{rem:Qlocaltranscourant}
Let us describe locally the homological vector field $Q_{tot}$
obtained in Thm. \ref{thm:tottransCA}. For this, we choose local coordinates $(x,v,\xi,r)$ on $ T[1]M\oplus \g_M[1]\oplus \RR_M[2]$, where $x^i$ are coordinates on an open neighborhood  $\cU$ of $M$, $v^i$ denote the corresponding coordinates on the fibers of $T[1]M$, the  $\xi^i$ are coordinates on $\g_M[1]$, and $r$ is the degree $2$ canonical coordinate on the fibers of $\RR_M[2]= \RR[2]\times M$. Then locally the negative of $Q_{tot}$ is:
\begin{multline*}\label{eq:Qtot:atiyahcourant}
-Q_{tot}=
\frac{1}{6}\sum_{i,j,k}\phi_{ijk}\xi^i\xi^j\xi^k\frac{\partial}{\partial r}
+
\frac{1}{2}\sum_{i,j,k} c_{ij}^k\xi^i\xi^j\frac{\partial}{\partial{\xi^k}}\\
+
\sum_{i,j,k} v^i\left(\Gamma_{ij}^k \xi^j \frac{\partial}{\partial{ \xi^k}}
 -\frac{\partial}{\partial{x^i}}\right)
+\sum_{i,j,k}\frac{1}{2}\omega_{ij}^k v^i v^j\Bigl(\frac{\partial}{\partial{\xi^k}}
+\sum_l \frac{1}{2}g_{kl}\xi^l\frac{\partial}{\partial r}\Bigr)
+\sum_{i,j,k}\frac{1}{6} H_{ijk}v^iv^jv^k \frac{\partial}{\partial r}.
\end{multline*} 
Here $\phi_{ijk},\ c_{ij}^k,\ \Gamma_{ij}^k,\ \omega_{ij}^k, \ g_{ij},\ H_{ijk} \in C^\infty(\mathcal{U})$ are defined as follows:
\begin{align*}
\phi_{ijk}:=\frac{1}{2}\la [\xi_i,\xi_j]_{\g_M}, \xi_k\ra_{\g_M},\quad 
[\xi_i,\xi_j]_{\g_M}=\sum_{k} c_{ij}^k\xi_k,\quad
 \nabla_{v_i}\xi_j=\sum_{k}\Gamma_{ij}^k \xi_k,\\
 \omega(v_i,v_j)=\sum_{k}\omega_{ij}^k \xi_k,\quad
 {g}_{jl}:=\la \xi_j,\xi_l\ra_{\g_M}\quad
 H_{ijk}:= H(\partial_{x_i},\partial_{x_j},\partial_{x_k}),
\end{align*}
where $v_i,\xi_i$ denote the basis of sections of $TM\oplus \g_M$ dual to  $v^i,\xi^i$.
{Notice that the first two terms on the r.h.s. combine to $-Q_{RW}$, and that the term involving $g_{kl}$ arises because of the embedding given in Lemma \ref{lem:subdgla}.}
\end{remark}

\subsection{$\RR[2]$-principal bundles and transitive Courant algebroids}
\label{sec:r2}

Let $E$ be a transitive Courant algebroid and $A:=E/(\ker \rho)^\perp$ the associated transitive Lie algebroid (see  \S\ref{sec:isotropicsplittings}).

Notice that any isotropic splitting  $\sigma$ of the anchor of $E$ descends to a splitting of the anchor of $A$ (since $\sigma(TM)\cap (\ker \rho)^{\perp}=\{0\}$, as one verifies taking orthogonals and using the isotropicity of $\sigma(TM)$),   leading to an identification $A=TM\oplus \g_M.$
On the other side, we saw in 
Thm. \ref{thm:tottransCA} that $\sigma$ allows to construct a Q-manifold  $(\cN,Q_{tot})$  which is a bundle over   $T[1]M$. Since $\cN$ is of the form  $\cN=T[1]M\oplus(\g_M[1]\oplus \RR_M[2])$
 it is natural to consider $\cN$ as a bundle over $A[1]$ as well,  via the projection:
$\pi^\cN_{A}:\cN=A[1]\oplus \RR_M[2] \to A[1].$

Summarizing the situation, we obtain a commutative diagram as follows:
\begin{equation}\label{diagram:courantransitivealgebroidtransitive}
\begin{gathered}
\SelectTips{cm}{}\xymatrix@C=0pt{ 
*+[c]{\cN=T[1]M\oplus(\g_M[1]\oplus \RR_M[2])} \ar[rr]^<<<<<<<{\pi^\cN_A}\ar[dr]_{\pi^\cN_{TM}}& & *+[c]{ A[1]=T[1]M\oplus \g_M[1]}\ar[dl]^{\pi^A_{TM}}\\
& T[1]M}
\end{gathered}
\end{equation}
All the maps in this diagram are the obvious projections, note that they all come with a splitting (induced only by $\sigma$).   
Furthermore, both $\pi^\cN_{TM}$ and $\pi^A_{TM}$ are morphisms of $Q$ manifolds ({the former by Thm. \ref{thm:tottransCA}}, the latter because it is the anchor of $A$). This situation raises two natural questions:
\begin{itemize}
\item Is $\pi^\cN_{A}$ a morphism of $Q$-manifolds ? 
\item In that case, how are the two $L_\infty$-actions of $TM$  on $\g_M[1]\oplus \RR[2]$ (induced by $\pi^\cN_{TM}$)  and on $\g_M[1]$ (induced by $\pi^A_{TM}$)  related ?
\end{itemize}

We shall answer the second question in the next section \S\ref{subsec:compare}.  The answer to the first question is positive, and indeed more is true:

\begin{cor}\label{cor:r2CA}
 $(\cN = A[1]\times \RR[2],Q_{tot})$ is a  $\RR[2]$-principal bundle  over $(A[1] ,Q_{A[1]})$ in the category of $Q$-manifolds.
\end{cor}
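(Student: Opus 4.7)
The plan is to verify, using the explicit local description of $Q_{tot}$ given in Rem. \ref{rem:Qlocaltranscourant}, the three features that define a principal bundle in the category of $Q$-manifolds: (i) the projection $\pi^\cN_A$ is a morphism of $Q$-manifolds; (ii) there is a free $\RR[2]$-action on $\cN$ whose orbits are the fibers of $\pi^\cN_A$; (iii) this action preserves $Q_{tot}$.

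First I would observe that, since $\RR_M$ is the trivial line bundle over $M$, the identification $\cN = A[1]\times \RR[2]$ holds globally (not merely locally). This yields at once the global $\RR[2]$-action by translation in the second factor, which is evidently free with orbits the fibers of $\pi^\cN_A$; its infinitesimal generator is the global vector field $\partial_r$.

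Next I would inspect the coordinate expression for $Q_{tot}$ in Rem. \ref{rem:Qlocaltranscourant}: all coefficient functions $\phi_{ijk},\, c_{ij}^k,\, \Gamma_{ij}^k,\, \omega_{ij}^k,\, g_{ij},\, H_{ijk}$ are pulled back from $C^\infty(\mathcal{U})$, so $Q_{tot}$ has no $r$-dependence. Two consequences follow: projecting out the $\partial_r$ summands yields exactly the local formula for $Q_{A[1]}$ recorded in Rem. \ref{rem:Qlocaltranslie}, showing that $\pi^\cN_A$ is a $Q$-morphism; and $[\partial_r, Q_{tot}] = 0$, showing $\RR[2]$-invariance. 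As these checks are local and the claims are global, assembling them requires nothing beyond the observation that $\partial_r$ is a globally defined vector field.

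The conceptual reason no real obstacle arises is already built into the construction of Thm. \ref{thm:tottransCA}: in the DGLA embedding $J$ of Lemma \ref{lem:subdgla}, every component of $J\circ F$ either acts trivially on $r$ or contributes only a multiple of $\partial_r$ (with coefficients depending on $x$ and $\xi$ alone), and $Q_{RW}$ has the same property. Thus the corollary is essentially an unpacking of the formula for $Q_{tot}$, with the $\RR[2]$-invariance reflecting the fact that the Roytenberg--Weinstein 2-algebra and the embedding $J_{-2}$ contribute only $\partial_r$-directed terms.
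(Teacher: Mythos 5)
Your proposal is correct and takes essentially the same route as the paper: both arguments read off the coordinate expressions from Rem.~\ref{rem:Qlocaltranslie} and Rem.~\ref{rem:Qlocaltranscourant} to decompose $Q_{tot}$ as $Q_{A[1]}$ plus an $r$-independent multiple of $\partial_r$, concluding that the projection is a $Q$-morphism and the vertical part is constant along the $\RR[2]$-fibers. Your extra remarks on the global triviality of the bundle and on why the DGLA embedding $J$ forces the $\partial_r$-directed structure are a harmless elaboration of the same computation.
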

\begin{proof}

The expressions in local coordinates from Remark   \ref{rem:Qlocaltranslie} and Remark \ref{rem:Qlocaltranscourant} show that  
\begin{equation*}\label{eq:Qtot2}
-Q_{tot}=-Q_{A[1]}+
\Bigl(\frac{1}{6}\sum_{i,j,k}\phi_{ijk}\xi^i\xi^j\xi^k 
 +\sum_{ijk}\frac{1}{2}\omega_{ij}^k v^i v^j\Bigl(\sum_l \frac{1}{2}g_{kl}\xi^l\Bigr)
+\sum_{i,j,k}\frac{1}{6} H_{ijk}v^iv^jv^k \Bigr)\frac{\partial}{\partial r}.
\end{equation*}
Hence the projection to $A[1]$ maps the homological vector field $Q_{tot}$   to the homological vector field $Q_{A[1]}$ (a more conceptual proof of this statement is given in   Cor. \ref{cor:QtotQA}). Further the vertical part of $Q_{tot}$ (the one proportional to $\frac{\partial}{\partial r}$) is constant along the fibers $\RR[2]$ (\emph{i.e.} it is independent of $r$).
\end{proof}

{The next two remarks address two consequences of Cor. \ref{cor:r2CA}. The first remark shows that we recover a  3-cocycle on $A$ whose relevance was explained in work of  Chen-Sti\'enon-Xu.} 
\begin{remark}[{On the characteristic class}]
{As a consequence of  Cor. \ref{cor:r2CA}  and Thm. \ref{charvf}, there is an $L_{\infty}$-action of $A $ on $\RR_M[2]$.  The coordinate expression in Cor. \ref{cor:r2CA} shows that the only non-vanishing components of the $L_{\infty}$-action are the unary one (given by the anchor of $A$) and   the trinary one. Therefore, by Cor. \ref{cor:f1fn}, this trinary component is a Lie algebroid cocycle
 $C\in \Gamma(\wedge ^3A^*)$ (for the trivial representation $\RR$). By the coordinate expression in Rem. \ref{rem:Qlocaltranscourant}, we see that:
\begin{align*}
C(\xi_1,\xi_2,\xi_3 )=&\frac{1}{2}\la [\xi_1,\xi_2]_{\g_M}, \xi_3 \ra_{\g_M},
\\
C(\sigma v_1,\sigma v_2,\xi_3)=&\frac{1}{2}\langle \omega(v_1,v_2), \xi_3\rangle_{\g_M},\\
C(\sigma v_1,\xi_2,\xi_3)=&0,\\
C(\sigma v_1,\sigma v_2,\sigma v_3)=& H(v_1,v_2,v_3),
\end{align*}
 where we used the direct sum decomposition $A=\g_M\oplus \sigma(TM)\ni \xi_i+\sigma v_i$ induced by the isotropic splitting $\sigma$. 
 Recall that $\omega$ and $H$ were defined in the proof of Prop. \ref {prop:tr-CASE-Loo-map}.}
 
 {The 3-cocycle $C$ is not new: it is the \emph{standard 3-cocycle} associated with the transitive Courant algebroid $E$ and the isotropic splitting $\sigma$. Further, the cohomology class  of $C$ is the \emph{characteristic class} associated to $E$ \cite{ChenStienonXu}.}
 
 {To explain this, we recall that for any quadratic transitive Lie algebroid\footnote{That is, for any   transitive Lie algebroid  with invariant symmetric pairing on the isotropy Lie algebras. {Actually \cite{ChenStienonXu} considers not only the transitive case but also the regular one.}}
 Chen-Sti\'enon-Xu  single out certain Lie algebroid 3-cocycles which they call \emph{coherent} \cite[Def. 1.11]{ChenStienonXu}. They show that isomorphism classes of transitive Courant algebroids are in bijection with equivalence classes of pairs consisting of a quadratic transitive Lie algebroid and a coherent 3-cocycle on it \cite[Thm. 1.15]{ChenStienonXu}. 
Of course, the Lie algebroid thus associated 
to a transitive Courant algebroid $E$  is just $A$,
 together with a certain coherent 3-cocycle.  The class of this cocycle in the third Lie algebroid cohomology of $A$ is the  \emph{characteristic class} as defined in \cite[\S 1.5,1.6]{ChenStienonXu}.
Now,   our 3-cocycle $C$ above is precisely  the \emph{standard} one as defined in \cite[Eq. (2.18) in \S 2.3]{ChenStienonXu},  and its class {$[C]\in H^3(A)$} is the characteristic class of $E$ \cite[Prop. 2.6]{ChenStienonXu}.   }
\end{remark}

{In the second remark we ask whether  transitive Courant algebroids can be encoded by  certain 
$\RR[2]$-principal bundles over $(A[1] ,Q_{A[1]})$ in the category of $Q$-manifolds, where
$A$ is the associated transitive Lie algebroid.}

\begin{remark}[{On classification and $\RR[2]$-principal bundles}]\label{rem:rnprinc}
{
Given a manifold $M$ and $n\ge 2$, there is a bijection between the  de Rham cohomology group $H^{n+1}(M)$ and isomorphism classes of $\RR[n]$-principal bundles over $(T[1]M,Q_{dR})$ in the category of $Q$-manifolds (see {{\v{S}}evera} \cite[\S 3]{s:funny} and {Uribe} \cite[Lemma 2.6]{UribeDGman}).
The latter are necessarily trivial as principal bundles, and the homological vector field is the sum of $Q_{dR}$ and a vector field which is vertical and constant along the fibers.
Explicitly, a class $[H]\in H^{n+1}(M)$ is mapped\footnote{We recover this assignment, at the level of representatives, from Ex. \ref{ex:closedfQ}.} to the class of  $$( T[1]M\times \RR[n], Q_{dR}+H\frac{\partial}{\partial r})$$ where $r$ is the (degree $n$) standard coordinate on $\RR[n]$. 

Specializing to $n=2$ and using the fact that \emph{exact} Courant algebroids modulo isomorphism are classified by $H^3(M)$ (via the {\v{S}}evera class), we obtain a bijection between isomorphism classes of exact Courant algebroids  and  isomorphism classes of $\RR[2]$-principal bundles over $(T[1]M,Q_{dR})$ in the category of $Q$-manifolds.
}

{The question of how to extend this bijection to \emph{transitive} Courant algebroids is open, to our knowledge.} Note however that
{Cor. \ref{cor:r2CA}} makes a first step in this direction: it  provides a map that assigns
to a transitive Courant algebroid $E\to M$ with an isotropic splittings a 
{$\RR[2]$-principal bundle  over $(A[1] ,Q_{A[1]})$ in the category of $Q$-manifolds. }

{Further, a preliminary computation indicates that the Courant algebroid structure can be recovered from the $\RR[2]$-principal bundle in a way parallel to the case of exact Courant algebroids. {(See \cite[Ex. 5.7]{RengifoJGP} for the exact case,  where Rengifo takes
 derived brackets of $Q$  with invariant vector fields on
  the $\RR[2]$-principal bundle, \emph{i.e.} with sections of the associated Atiyah algebroid)}.

Indeed, one can identify sections of $E=TM\oplus \g_M\oplus T^*M$ with certain invariant  elements of $\vX_{-1}(A[1]\times \RR[2])$ as follows:
$\Gamma(TM)$ identifies naturally with $\vX_{-1}(T[1]M)$, while $\Gamma(T^*\!M)$ identifies naturally with $C_1(T[1]M)\frac{\partial}{\partial r}$, and $ v\in \Gamma(\g_M)$ is identified  
with $v+{\frac{1}{2}}\langle v,\  \rangle \frac{\partial}{\partial r}$ using
the  diagonal embedding $J_{-1}$ encountered in the proof of Lemma \ref{lem:subdgla}.
A preliminary computation indicates that taking the derived bracket $[[Q,X],Y]$ of two such elements of $\vX_{-1}(A[1]\times \RR[2])$ delivers the Courant algebroid bracket (as in \cite[\S 3.3]{Hekmati}), and taking the Lie bracket $[X,Y]$ delivers the pairing between the two corresponding sections of $E$. 
}
\end{remark}

 \subsection{Comparison of the $L_{\infty}$-actions obtained from transitive Courant algebroids
(\S \ref{sec:related}) and transitive Lie algebroids (\S \ref{sec:algebroids}).}\label{subsec:compare}
 
{Let $E$ be a transitive Courant algebroid and $\sigma$ an isotropic splitting of its anchor. We denote by $\g_M$ the associated bundle of quadratic Lie algebras (see  \S\ref{sec:isotropicsplittings}).}
 {Our aim in this subsection is to answer the second question raised just below diagram \eqref{diagram:courantransitivealgebroidtransitive}. In order to do this, recall that
 $\WDGLA(\g_M,\la\ ,\,\ra_{\g_M})$ and $\WDGLA(\g_M)$ denote DGLAs introduced respectively in Lemma \ref{lem:W-dgla-struct:transitivecourant} and before Prop. \ref{prop:liclasstransLA}.
One verifies easily that the obvious map ${\Pi\colon} \WDGLA(\g_M,\la\ ,\,\ra_{\g_M})\to \WDGLA(\g_M)$ is a (strict) morphism of DGLAs.

Next, consider the projection  $$pr\colon (\g_M[1]\oplus \RR_M[2], Q_{RW})\to
 (\g_M[1], Q_{\g_M})$$
between the $Q$-manifold introduced in \eqref{eq:qrw} and the shift by $1$ of the bundle of Lie algebras $\g_M$. It is a map of $Q$-manifolds, since the projection onto the {first}  summand of the $L_{\infty}$-algebra $ \g\oplus \RR$ is a strict morphism. Notice that the vector fields of degree $\leq 0$ on $\g_M[1]\oplus \RR_M[2]$ are all projectable (see for instance the proof of Lemma \ref{lem:subdgla}). Therefore the push-forward of vector fields
$$ pr_*\colon \vX_{\leq 0}( \g_M[1]\oplus \RR_M[2])\to \vX_{\leq 0}(\g_M[1])$$ is a morphism of DGLAs, {where the differentials are $-[Q_{RW},\;]$ and $-[Q_{\g_M[1]},\;]$.}

{The two maps above fit into a commutative diagram of DGLA morphisms} 
\begin{equation}\label{eq:diag4comm}
  \SelectTips{cm}{}
 \xymatrix{
\WDGLA(\g_M,\la\ ,\,\ra_{\g_M}) \ar@{^{(}->}@<-0.5pt>[r]\ar[d]_{{\Pi}}  & \vX_{\leq 0}(\g_M[1]\oplus \RR_M[2])\ar[d]^{pr_*}\\
\WDGLA(\g_M) \ar@{^{(}->}@<-0.5pt>[r]  &   \vX_{\leq 0}(\g_M[1])  
}
\end{equation}
where the upper horizontal arrow is the embedding given in Lemma \ref{lem:subdgla} 
{and the lower horizontal arrow is the natural embedding (see the end of \S \ref{subsec:extclas}).}
 This shows:
\begin{prop}\label{prop: comparingactionstransitivecourantLie} Let $E$ be an transitive Courant algebroid over $M$, consider the transitive Lie algebroid $A:=E/(\ker \rho)^{\perp}$. Choose  an isotropic splitting $\sigma$ of the anchor of $E$.

The $L_{\infty}$-action induced by $A$ {and $\sigma$} via Prop. \ref{prop:sumsurjLA} 
is the composition
$$\Gamma(TM)  \rightsquigarrow \vX_{\leq 0}( \g_M[1]\oplus \RR_M[2]) \overset{pr_*}{\to}\vX_{\leq 0}(\g_M[1]) $$
of the $L_{\infty}$-action $J\circ F$ induced by $E$ (see the proof of Thm. \ref{thm:tottransCA})
 with the map $pr_*$.
\end{prop}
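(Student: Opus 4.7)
The plan is to reduce the statement to an unwinding of definitions together with commutativity of diagram \eqref{eq:diag4comm}. Recall that the $L_\infty$-action obtained from the transitive Courant algebroid in Thm.~\ref{thm:tottransCA} is, by Prop.~\ref{prop:liacQ} applied to its higher components, the composition $J \circ F$, where $F\colon \Gamma(TM) \rightsquigarrow \WDGLA(\g_M,\langle\,,\,\rangle_{\g_M})$ is the $C^\infty(M)$-linear $L_\infty$-morphism of Prop.~\ref{prop:tr-CASE-Loo-map} with components $F_1=\nabla$, $F_2=\omega$, $F_3=H$. Similarly, by Prop.~\ref{prop:liclasstransLA} (and the discussion at the end of \S\ref{subsec:extclas}), the $L_\infty$-action arising from Prop.~\ref{prop:sumsurjLA} applied to the Lie algebroid $A$ with the induced linear splitting $\sigma_A:=\mathrm{pr}_A\circ\sigma\colon TM\to A$ is the composition of the natural embedding $\WDGLA(\g_M)\hookrightarrow \vX_{\leq 0}(\g_M[1])$ with an $L_\infty$-morphism $G\colon \Gamma(TM) \rightsquigarrow \WDGLA(\g_M)$ whose only nonzero components are a unary $G_1=\nabla_A$ and a binary $G_2=\omega_A$.

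The first key step is to verify that $G = \Pi\circ F$. Since $\Pi$ is the identity on $\WDGLA_{-1}=\Gamma(\g_M)$ and the inclusion $\der(\g_M,\langle\,,\,\rangle_{\g_M})\hookrightarrow\der(\g_M)$ in degree $0$, while $\WDGLA(\g_M)$ has no component in degree $-2$ so that $\Pi$ annihilates $F_3=H$, one obtains $(\Pi\circ F)_1=\nabla$, $(\Pi\circ F)_2=\omega$ and no higher components. On the other hand, since the isotropic splitting $\sigma$ descends to $\sigma_A$, and since the Lie bracket of $A$ is induced by the Courant bracket modulo $(\ker\rho)^\perp$, the formulas
\[
\nabla_A(a)v=[\sigma_A(a),v]_A,\qquad \omega_A(a,b)=[\sigma_A(a),\sigma_A(b)]_A-\sigma_A([a,b]_{TM})
\]
coincide with the corresponding formulas defining $\nabla$ and $\omega$ in the proof of Prop.~\ref{prop:tr-CASE-Loo-map}. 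Hence $G_1=\nabla=(\Pi\circ F)_1$ and $G_2=\omega=(\Pi\circ F)_2$, so indeed $G=\Pi\circ F$.

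The second key step is to combine this identification with the commutativity of diagram \eqref{eq:diag4comm}, obtaining
\[
pr_*\circ J\circ F = (\text{natural embedding})\circ \Pi\circ F = (\text{natural embedding})\circ G.
\]
The left-hand side is the composition appearing in the statement of the proposition; the right-hand side is the $L_\infty$-action induced by the transitive Lie algebroid $A$ and the splitting $\sigma_A$ via Prop.~\ref{prop:sumsurjLA}. Since $pr_*$ is a DGLA morphism and $J\circ F$ is an $L_\infty$-morphism into $\vX_{\leq 0}(\g_M[1]\oplus\RR_M[2])$, the composition $pr_*\circ(J\circ F)$ is automatically an $L_\infty$-morphism into $\vX_{\leq 0}(\g_M[1])$, so no additional compatibility needs to be verified.

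I do not anticipate any serious obstacle: the essential content is already contained in the commutative diagram \eqref{eq:diag4comm} together with the explicit descriptions of the components. The only point that deserves care is the matching of $G$ with $\Pi\circ F$, which amounts to observing that the bracket on $A$ is the one induced from the bracket on $E$ modulo $(\ker\rho)^\perp$, and that the extra $3$-form component $H$ of $F$ is sent to zero by $\Pi$ because it lives in a degree absent from $\WDGLA(\g_M)$.
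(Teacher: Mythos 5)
Your proposal is correct and follows essentially the same route as the paper's own proof: the paper likewise observes that the explicit component formulas of Prop.~\ref{prop:tr-CASE-Loo-map} and Prop.~\ref{prop:liclasstransLA} show the two $L_\infty$-morphisms are intertwined by $\Pi$ (with $F_3=H$ killed since $\WDGLA(\g_M)$ has no degree $-2$ part), and then concludes by the commutativity of diagram \eqref{eq:diag4comm}. Your write-up merely spells out these two steps in more detail than the paper does.
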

\begin{proof}
 {The expression for the $L_{\infty}$-morphism 
$\Gamma(TM)\leadsto \WDGLA(\g_M,\la\ ,\, \ra_{\g_M})$ in the proof of Prop. \ref{prop:tr-CASE-Loo-map} and for the $L_{\infty}$-morphism $\Gamma(TM)\leadsto \WDGLA(\g_M)$
in  Prop. \ref{prop:liclasstransLA} show that their are intertwined by $\Pi$. Finish using the commutativity of   diagram \eqref{eq:diag4comm}.} 
\end{proof}

The correspondence described in Thm. \ref{charvf} immediately implies the following corollary, which can also  be readily seen from the expressions in local coordinates from Remark   \ref{rem:Qlocaltranslie} and Remark \ref{rem:Qlocaltranscourant}.

{\begin{cor}\label{cor:QtotQA}
The projection $$T[1]M\oplus   \g_M[1]\oplus \RR_M[2]\to T[1]M\oplus \g_M[1]=A[1]$$
maps the the homological vector field $Q_{tot}$ of Thm. \ref{thm:tottransCA} to the homological vector field $Q_{A[1]}$ (see eq. \eqref{eq:isogr}).
\end{cor}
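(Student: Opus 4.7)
The plan is to deduce the statement from Prop. \ref{prop: comparingactionstransitivecourantLie} together with Thm. \ref{charvf}, invoking the naturality of the correspondence in the main theorem under push-forward along morphisms of $Q$-manifolds. Write $\pi := \pi^{\cN}_A$ for the projection in question, and factor it as
\[
\pi = \id_{T[1]M} \times_{M} pr,
\]
where $pr\colon (\g_M[1]\oplus \RR_M[2], Q_{RW}) \to (\g_M[1], Q_{\g_M[1]})$ is the projection onto the first summand, which is a morphism of $Q$-manifolds (see the discussion preceding eq. \eqref{eq:diag4comm}).

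First I would verify the following naturality principle: given a morphism of $Q$-manifolds $p\colon (\cM,Q_\cM)\to (\cM',Q_{\cM'})$ with base $M$, if $F\colon\Gamma(A)\rightsquigarrow \vX(\cM)$ is an $L_\infty$-action of $A$ compatible with $Q_\cM$ and $p_*\circ F$ is a well-defined $L_\infty$-action of $A$ on $\cM'$ compatible with $Q_{\cM'}$, then the associated homological vector fields $Q_{tot}$ on $A[1]\times_M \cM$ and $Q'_{tot}$ on $A[1]\times_M \cM'$ (from Thm. \ref{charvfQM}) satisfy $(\id_{A[1]}\times_M p)_* Q_{tot} = Q'_{tot}$. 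This follows directly from the explicit construction of the bijection in the proof of Thm. \ref{charvf}: the vector field on the fiber product is built out of $Q_{A[1]}$ together with the image of the $L_\infty$-action under Lemma \ref{lem:bijeasy}, and both pieces are clearly natural under $p_*$.

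Next, applying this naturality to $p = pr$ and the $L_\infty$-action $J\circ F$ of $TM$ on $\g_M[1]\oplus \RR_M[2]$ constructed in the proof of Thm. \ref{thm:tottransCA}, Prop. \ref{prop: comparingactionstransitivecourantLie} tells us that $pr_*\circ (J\circ F)$ is precisely the $L_\infty$-action of $TM$ on $\g_M[1]$ induced from the transitive Lie algebroid $A$ via Prop. \ref{prop:sumsurjLA}. By the construction of Prop. \ref{prop:sumsurjLA} (see eq. \eqref{eq:isogr}), the homological vector field on $T[1]M\times_M \g_M[1]$ associated to this $L_\infty$-action is exactly $Q_{A[1]}$. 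Hence $\pi_* Q_{tot} = Q_{A[1]}$, as required.

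I do not expect any serious obstacle: the main content is the naturality statement above, which is a matter of unpacking Lemma \ref{lem:bijeasy} and Prop. \ref{thm:equiv}, and observing that taking iterated brackets with $\iota_{s_i}$ and restricting commutes with push-forward along a $Q$-morphism of the target N-manifold. Alternatively, as indicated in the statement, one may simply compare the local-coordinate expressions in Rem. \ref{rem:Qlocaltranslie} and Rem. \ref{rem:Qlocaltranscourant}: the only term in $Q_{tot}$ that involves $\tfrac{\partial}{\partial r}$ is tangent to the fibers of $\pi$, so $\pi_* Q_{tot}$ coincides termwise with the expression for $Q_{A[1]}$ in Rem. \ref{rem:Qlocaltranslie}.
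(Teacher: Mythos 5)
Your proposal is correct and follows essentially the same route as the paper, which asserts that the corollary is ``immediately implied'' by the correspondence of Thm.~\ref{charvf} applied to Prop.~\ref{prop: comparingactionstransitivecourantLie}, and can alternatively be read off from the coordinate expressions in Rem.~\ref{rem:Qlocaltranslie} and Rem.~\ref{rem:Qlocaltranscourant} (as already done in Cor.~\ref{cor:r2CA}). The only difference is that you make explicit the naturality of the bijection $X\mapsto\phi_X$ under push-forward along $pr$, a step the paper leaves implicit; your justification of it (projectability of the degree $\le 0$ vector fields and compatibility of the iterated brackets with $pr_*$) is sound.
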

}

\begin{remark}[{The Pontryagin class}]
One can obtain an interpretation of the Pontryagin class of \cite{Bressler}\cite{ChenStienonXu} as an obstruction for the problem of  lifting an $L_\infty$-action, as follows.

Given an $L_\infty$-action of $TM$ on $\g_M[1]$ given by  $F:\Gamma(TM)  \rightsquigarrow \WDGLA(\g_M)$, where the latter was introduced before Prop. \ref{prop:liclasstransLA}, one wonders if it can be lifted to an $L_\infty$-action action on $\g_M[1]\oplus \RR_M[2]$ {given by an $L_{\infty}$-morphism}    $\widetilde{F}:\Gamma(TM)  \rightsquigarrow \WDGLA(\g_M,\la\ ,\,\ra_{\g_M})$ (recall that the latter was defined in  Lemma \ref{lem:W-dgla-struct:transitivecourant}) such that the following diagram commutes:
$$\SelectTips{cm}{}
\xymatrix@C=0pt{
         & *+[c]{ \WDGLA(\g_M,\la\ ,\,\ra_{\g_M})}
                 \ar[d]^{{\Pi}}\\
\Gamma(TM)\ar@{~>}[r]^-{F}\ar@{-->}[ur]^<<<<<<<{\widetilde{F}}
         & *+[c]{\WDGLA(\g_M)}.
}
$$
Notice that $F$ has only two components while $\widetilde{F}$ has three components. Furthermore, it is easily seen from the commutativity of the above diagram that the first two components $\widetilde{F}_1,\widetilde{F}_2$ of $\widetilde{F}$ are determined by $F_1,F_2$. In other words, in order to be able to lift $F$  there is a first obvious necessary condition, which is that $F_1=:\nabla$ {takes values in $\WDGLA(\g_M,\la\ ,\,\ra_{\g_M})$, \emph{i.e.} that it is} a derivation of $\la\ ,\, \ra_{\g_M}$ as in equation \eqref{eq:TCAstruct1}. {Whenever it is the case, a straightforward computation shows that {$F_2=:\omega$ satisfies} $\d_{dR}\la \omega_\wedge\omega\ra_{\g_M}=\la\nabla\omega_\wedge\omega\ra_{\g_M}+\la\omega_\wedge\!\nabla\omega\ra_{\g_M}=0$. Since } one is left with choosing $\widetilde{F}_3=:H\in \Omega^3(M)$ such that equation  \eqref{eq:TCAstruct5} is satisfied {(because eq.  \eqref{eq:TCAstruct5} agrees with eq. \eqref{eq:explicit3})}, we {recover} the vanishing of the  Pontryagin class $[\la \omega,\omega\ra_{\g_M}]\in H^4(M)$. 

\end{remark}

 
\appendix
\section{Background on $L_{\infty}$-algebras}\label{app:li}

We recall the necessary background on $L_{\infty}$-algebras and their morphisms, along the lines of \cite[\S 2]{RajMarco}. See also \cite{LadaMarkl},\cite[\S 1.1]{FloDiss}.

\subsection{$L_{\infty}$-algebras}\label{subsec:lialg}

Given a graded vector space $V=\oplus_{n\in \ZZ}V_n$,   
 the  graded skew-symmetric algebra over $V$ is defined as the following quotient of  the tensor algebra:
$$\wedge V:=TV/ \langle v \otimes v' +(-1)^{|v||v'|}v' \otimes v \rangle,$$  
where $|v|$ is the degree of a homogeneous element $v \in V$.

 For any homogeneous elements $v_1,\dots,v_j \in V$ and $\tau\in S_j$ (the group of permutations of $j$ elements), the 
sign $\chi(\tau)$ is defined by  
$$v_{\tau_1}\wedge \cdots \wedge v_{\tau_j}=\chi(\tau)v_{1} \wedge \cdots \wedge v_{j}.$$
 
A permutation $\tau\in S_n$ is called an \emph{$(i, n-i)$-unshuffle} if it satisfies   $\tau(1)<\dots<\tau(i)$ and $\tau(i+1)<\dots<\tau(n)$. The set of $(i, n-i)$-unshuffles is denoted by $Sh(i, n-i).$ Following \cite[Def. 2.1]{LadaMarkl} and \cite[Def. 5]{KajSta}, we define:  

\begin{defi}\label{li}
An \emph{$\li$-algebra} is a graded vector space $V$ equipped with a collection of linear maps $[\cdots]_k \colon \wedge ^kV\longrightarrow V$ of degree $2-k$, for $k\ge1$, such that
$$\sum_{i=1}^n (-1)^{i(n-i)}\sum_{\tau\in Sh(i,n-i)}\chi(\tau)\bigl[[v_{\tau(1)}, \dots,v_{\tau(i)}]_i,v_{\tau(i+1)}, \dots, v_{\tau(n)} \bigr]_{n-i+1}=0$$
for  homogeneous elements $v_1, \dots, v_n \in V$. 
\end{defi}  

\begin{ex}
An $\li$-algebra for which only the first multibracket $[\;\cdot\;]_1$  is non-trivial is the same thing as  a cochain complex. 
 \end{ex}
\begin{ex}
An $\li$-algebra for which only $[\cdot,\cdot]_2$ is non-trivial is the same thing as a graded Lie algebra. Recall that 
 a \emph{graded Lie algebra} consists of   a graded vector space $V=\oplus_{i\in \ZZ} V_i$
together with a  bilinear bracket $[\cdot,\cdot] \colon V \times V \rightarrow V$ such that, for all  homogeneous   $a,b,c\in V$:
\begin{itemize}
\item[--] the bracket is degree-preserving, \emph{i.e.} $[V_i,V_j]\subset V_{i+j}$
 \item[--] the bracket is graded skew-symmetric, \emph{i.e.}
 $[a,b]=-(-1)^{|a||b|}[b,a]$
\item[--] 
{the Jacobi identity holds, \emph{i.e.}} the adjoint action $[a,\cdot]$ is a degree $|a|$ derivation of the bracket: $[a,[b,c]]=[[a,b],c]+(-1)^{|a||b|}[b,[a,c]]$ .
\end{itemize}
\end{ex}
\begin{ex}\label{dgla}
An $\li$-algebra for which only $[\;\cdot\;]_1$ and $[\cdot,\cdot]_2$ are non-trivial is the same thing as a \emph{differential graded Lie algebra} (DGLA). Recall that a DGLA $(V,[\cdot,\cdot], \delta)$ is a graded Lie algebra together with a linear map $\delta : V \rightarrow V$ such that
\begin{itemize}
 \item[--] $\delta$ is a degree $1$ derivation of the bracket, \emph{i.e.}
$\delta(V_i)\subset V_{i+1}$ and $\delta[a,b]=[\delta a,b]+ (-1)^{|a|}[a, \delta b]$
\item[--]  $\delta^2=0$.
\end{itemize}
\end{ex}

Although in this paper we focus on $L_{\infty}$-algebras, it is useful to introduce the closely related notion of $L_{\infty}[1]$-algebra.
The graded symmetric algebra over $V$ is the quotient of  the tensor algebra $$SV:=TV/\langle v \otimes v' -(-1)^{|v||v'|}v' \otimes v \rangle .$$ 
For any homogeneous elements $v_1,\dots,v_j \in V$ and $\tau\in S_j$, the sign
$\epsilon(\tau)$ is defined by 
$$v_{\tau_1}\cdots v_{\tau_j}=\epsilon(\tau)v_{1}\cdots v_{j}$$
where the product appearing here is the one in 
  $SV$.

\begin{defi}\label{li1}
An \emph{$\li[1]$-algebra}
is a graded vector space $V$ equipped with a collection of linear maps $\{\cdots\}_k \colon S^kV\longrightarrow V$ of degree $1$, for $k\ge1$, such that 
$$\sum_{i=1}^n \sum_{\tau\in Sh(i,n-i)}\epsilon (\tau)\bigl\{\{v_{\tau(1)}, \dots,v_{\tau(i)}\}_i,v_{\tau(i+1)}, \dots, v_{\tau(n)} \bigr\}_{n-i+1}=0$$
for  homogeneous elements $v_1, \dots, v_n \in V$.
\end{defi}

The relation between the two notions is the following: there is a bijection \cite[Rem. 2.1]{Vor} between \li-algebra structures on a graded vector space $V$ and $\li[1]$-algebra structures on $V[1]$, the graded vector space defined by $(V[1])_i:=V_{i+1}$. The multibrackets are related by the \emph{d\'ecalage isomorphism}
\begin{equation}\label{deca}
 (\wedge^n V)[n] \cong S^n(V[1]),\;\; v_1\cdots v_n \mapsto v_1\cdots v_n\cdot (-1)^{(n-1)|v_{1}|+\dots+2|v_{n-2}|+|v_{n-1}|}.
 \end{equation}

\begin{remark}[The derived bracket construction]\label{rem:vorcon}
We recall briefly Voronov's derived bracket construction, which delivers $\li[1]$-algebras out of simple data {\cite[Thm. 1, Cor. 1]{Vor}}. Consider a quadruple $(L,\mathfrak{a}, P,\Delta)$ where 
  $L$ is a graded Lie algebra, $\mathfrak{a}$ an abelian  Lie subalgebra,  $P \colon L \to \mathfrak{a}$ a projection whose kernel is a Lie subalgebra of $L$, and 
$\Delta \in Ker(P)_1$ an element such that $[\Delta,\Delta]=0$. Then 
 $\mathfrak{a}$ acquires a $L_\infty[1]$-algebra structure, with   multibrackets   $\{\emptyset\}:=P\Delta$ and 
($n\ge 1$)
\begin{align}\label{eq:adp}
\{ a_1,\dots,a_n\}_n&=P[\dots[[\Delta,a_1],a_2],\dots,a_n].
\end{align}  
The corresponding $L_{\infty}$-algebra structure is obtained applying the d\'ecalage isomorphism.
\end{remark}

\begin{remark}
Given a finite dimensional $\ZZ$-graded vector space $V$, there is a bijection between 
$\li[1]$-algebra structures on $V$ and homological vector fields (see Def. \ref{NQm}) on $V$ which vanish at the origin.
The correspondence is given by Voronov's derived bracket construction, and is analogous to the one of Lemma \ref{nqnla} below.
\end{remark}

\subsection{$L_{\infty}$-algebras morphisms}
In this paper we make use of the notion of (curved) $\li$-algebra morphism. To define it, we first consider 
  (curved) $\li[1]$-algebra morphisms  \cite[Def. 6]{KajSta}
\cite[Def. 1.7]{FloDiss}. We only need a special case:

\begin{defi}\label{li1mor}  
Let $V$  be a {$\li[1]$-algebra} with multibrackets   $\{\cdots\}$, and let $(W,\textbf{d},\textbf{\{}\cdot,\cdot\textbf{\}})$ be 
an $L_{\infty}[1]$-algebra whose only non-trivial multibrackets are the unary and binary one.

a) 
An \emph{$\li[1]$-algebra morphism} from $V$ to $W$, denoted $V \rightsquigarrow W$, is a degree $0$ linear map $\phi \colon S^{\ge 1} V\to W$ such that  for all $n \geq 1$
 \begin{equation}\label{li1morph}
 	\begin{split}
 &\sum_{i=1}^n \sum_{\tau\in Sh(i,n-i)}\epsilon (\tau) \phi_{n-i+1}(\{v_{\tau(1)}, \dots,v_{\tau(i)}\}_i,v_{\tau(i+1)}, \dots, v_{\tau(n)}) \\
=&\textbf{d}\phi_n(v_{1}, \dots,v_{n})
+\frac{1}{2}
\sum_{j=1 }^{n-1}  \sum_{\tau\in Sh(j,n-j)} \epsilon (\tau)
\textbf{\{}\phi_{j}(v_{\tau(1)}\;,\; \dots,v_{\tau(j)}), 
\phi_{n-j}(v_{\tau(j+1)}, \dots,v_{\tau(n)}
)\textbf{\}}.		
 	\end{split}
 \end{equation}
for every collection of homogeneous elements $v_1, \dots, v_n \in V$. Here $S^{\ge 1} V:=\oplus_{i\ge 1}S^i V$.

b) a \emph{curved} $\li[1]$-algebra morphism $V \rightsquigarrow W$ consists of a degree $0$ linear map $\phi: SV \to W$ satisfying, for $n \geq 0$, a variation of 
\eqref{li1morph} where the index $j$ on the right side of the equation runs from $0$ to $n$.
\end{defi}

\begin{remark}
If $\phi: SV \to W$ is a degree $0$ linear map, then the zero component $\phi_0 \colon \RR \to W_0$ gives rise to an element $\phi_0(1)\in W_0$, which by abuse of notation we denote by $\phi_0$. The curved variant of eq. \eqref{li1morph} for $n=0$ then reads 
$0=\textbf{d}\phi_0+\frac{1}{2}\textbf{\{}\phi_0,\phi_0\textbf{\}}.$
In other words, if $\phi$ is a curved $\li[1]$-algebra morphism, then $\phi_0$ is a Maurer-Cartan element of $W$.
\end{remark}

The following fact, whose proof follows easily from the above, is implicit in \cite{K}:
\begin{lemma}\label{lemma:curved}
Let $V$ be an $L_{\infty}[1]$-algebra, let $(W,\textbf{\{}\ ,\ \textbf{\}})$  be a
$L_{\infty}[1]$-algebra whose only non-trivial bracket is the binary one,
 and let $\phi \colon SV\to W$ be a degree zero linear map.  Then the following is equivalent:
 \begin{itemize}
\item $\phi$ is a curved $\li[1]$-algebra morphism from $V$ to $W$ 
\item
$\textbf{\{}\phi_0,\phi_0\textbf{\}}=0$ and $\{\phi_i\}_{i\ge 1}$ is an $\li[1]$-algebra morphism from $V$ to 
$(W,\textbf{\{}\phi_0,\ \textbf{\}},\textbf{\{}\ ,\ \textbf{\}})$.
\end{itemize}
\end{lemma}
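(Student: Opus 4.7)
The plan is to write out the defining equation of a curved $L_{\infty}[1]$-morphism explicitly in the case at hand and separate it into its homogeneous components of ``arity'' $n\geq 0$, then show that the $n=0$ equation and the collection of $n\geq 1$ equations correspond exactly to the two bullet points on the right-hand side of the lemma.

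First I would specialize the curved version of eq. \eqref{li1morph} to the situation where $W$ has only the binary bracket nontrivial, so the term $\textbf{d}\phi_n(v_1,\dots,v_n)$ that appears in the non-curved formulation vanishes. The curved equation for $n=0$ has no summands on its left-hand side (the index $i$ would have to satisfy $1\leq i\leq 0$) and only the summand $j=0$, $n-j=0$ on the right, yielding
\[
0=\tfrac{1}{2}\textbf{\{}\phi_0,\phi_0\textbf{\}}.
\]
This is the first condition in the second bullet of the lemma.

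Next, for $n\geq 1$, I would isolate the extreme terms $j=0$ and $j=n$ from the sum $\sum_{j=0}^n$ on the right-hand side. Both unshuffle-sets $Sh(0,n)$ and $Sh(n,0)$ consist of the identity permutation, so these two terms are $\tfrac12\textbf{\{}\phi_0,\phi_n(v_1,\dots,v_n)\textbf{\}}$ and $\tfrac12\textbf{\{}\phi_n(v_1,\dots,v_n),\phi_0\textbf{\}}$. Since $\phi_0\in W_0$ (this is forced by the degree conventions: $\phi$ has degree $0$ and the multibrackets of $W$ have degree $1$, so the equation $\textbf{\{}\phi_0,\phi_0\textbf{\}}$ is homogeneous of degree $1$, hence $\phi_0$ is degree $0$), the graded symmetry of $\textbf{\{}\ ,\ \textbf{\}}$ in the $L_\infty[1]$-convention gives $\textbf{\{}\phi_0,w\textbf{\}}=\textbf{\{}w,\phi_0\textbf{\}}$ for every $w$. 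Hence the two extreme terms combine to $\textbf{\{}\phi_0,\phi_n(v_1,\dots,v_n)\textbf{\}}$. The remaining sum over $1\leq j\leq n-1$ is exactly the bracket term appearing in the \emph{non-curved} morphism equation \eqref{li1morph}. Thus the curved equation of arity $n\geq 1$ rewrites as the non-curved morphism equation in which the differential $\textbf{d}$ is replaced by $\textbf{\{}\phi_0,\ \textbf{\}}$.

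To close the argument I would then verify that $(W,\textbf{\{}\phi_0,\ \textbf{\}},\textbf{\{}\ ,\ \textbf{\}})$ really is an $L_{\infty}[1]$-algebra. The unary operator $\textbf{\{}\phi_0,\ \textbf{\}}$ has degree $1$ because $\phi_0$ has degree $0$, it squares to zero by graded Jacobi together with $\textbf{\{}\phi_0,\phi_0\textbf{\}}=0$ (this is precisely where the $n=0$ equation is used), and its compatibility with the binary bracket is again graded Jacobi. Consequently, once $\textbf{\{}\phi_0,\phi_0\textbf{\}}=0$ holds, the collection of arity-$n\geq 1$ curved morphism equations is literally the definition of $\{\phi_i\}_{i\geq 1}$ being a (non-curved) $L_\infty[1]$-morphism into this DGLA-like structure, and conversely. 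I expect the only mildly delicate point to be keeping track of the sign in the graded symmetry that identifies the $j=0$ and $j=n$ terms, but this is painless since $|\phi_0|=0$ kills the sign; no other obstacle is anticipated.
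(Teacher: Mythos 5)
Your proof is correct, and it is precisely the routine arity-by-arity verification that the paper leaves implicit (the text only remarks that the lemma ``follows easily'' from the definitions and is implicit in the cited reference): the $n=0$ equation gives the Maurer--Cartan condition on $\phi_0$, and for $n\ge 1$ the two extreme terms $j=0,n$ of the curved sum combine, via the graded symmetry of $\textbf{\{}\ ,\ \textbf{\}}$ and $|\phi_0|=0$, into the twisted differential $\textbf{\{}\phi_0,\ \textbf{\}}$ of the non-curved morphism equation. The only cosmetic point is that $|\phi_0|=0$ follows immediately from $\phi$ having degree $0$ and $1\in S^0V$ having degree $0$, rather than from the homogeneity of $\textbf{\{}\phi_0,\phi_0\textbf{\}}$; the conclusion and the rest of the argument are fine.
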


In this paper we make use of (curved) $\li$-algebra morphisms into a DGLA.
\begin{defi}\label{limor}  
Let $V$  be an $\li$-algebra and $W$ a DGLA.  An \emph{$\li$-algebra morphism} from $V$ to $W$, denoted $V \rightsquigarrow W$, is a linear map $\phi \colon \wedge^{\ge 1} V\to W$ which, 
under the d\'ecalage isomorphism \eqref{deca}, corresponds to a $\li[1]$-algebra morphism from $V[1]$ to $W[1]$ (see Definition \ref{li1mor}).
\end{defi}

Notice that the $k$-th component  of an $\li$-algebra morphism is a map $\wedge^{k} V\to W$ of degree ${1}-k$.
  In equation \eqref{eq:Aloo}
 we display the conditions satisfied by an $L_\infty$-morphism from a  Lie algebra to a  DGLA, using the notation introduced there. For later convenience, we now spell out special cases of this.

\begin{remark}[{Explicit formulae for $L_{\infty}$-morphisms}]\label{rem:explicit-Linfty-morphism}
We shall be particularly interested in  $L_\infty$-morphisms of the form: 
$$F:\g\leadsto \WDGLA,$$ where  $\g$ is a Lie algebra and $\WDGLA=\WDGLA_{0}\oplus \WDGLA_{-1}\oplus \WDGLA_{-2}$ is a $DGLA$  concentrated in degrees $0,-1$ and $-2$, seen as an $L_\infty$-algebra whose higher brackets $[\cdots]_k $ vanish for $k\geq 3$. Explicitly, such an $L_\infty$-algebra morphism  has components:
$$ F_1\colon \g \to \WDGLA_0,\;\;
   F_2\colon \wedge^2\g \to \WDGLA_{-1},\;\;
   F_3\colon \wedge^3\g \to \WDGLA_{-2},
$$
satisfying the following conditions:
\begin{align}
F_1([x,y]_\g)-[ F_1(x), F_1(y)] 
                          &=\textbf{d}F_2(x,y),\label{eq:explicit1}\\
F_2([x,y]_\g,z)- [F_1(x),F_2(y,z)] +c.p
                          &=\textbf{d}F_3(x,y,z),\label{eq:explicit2}
\end{align}
and:       
      \begin{align}
&F_3([x, y]_\g, z, t)-F_3([x, z]_\g, y, t)
+ F_3([x, t]_\g, y, z)\nonumber\\
 +& F_3([y, z]_\g, x, t) 
-F_3([y, t]_\g, x, z)
 +F_3([z, t]_\g, x, y)\nonumber\\
- &[ F_1(x), F_3(y, z, t)]  +[ F_1(y), F_3 (z, t, x)] 
 - [F_1(z), F_3(t,x,y)] +[ F_1(t), F_3(x,y, z) ]\nonumber\\
=-& [ F_2 (x, y), F_2(z, t)]+[ F_2 (x, z), F_2(y, t)]  -[ F_2 (x, t), F_2(y,z)],\label{eq:explicit3}
\end{align}
for any $x,y,z,t\in \g$.  
Indeed, applying eq. \eqref{eq:Aloo} to the case of hand for $n=2,3,4$ delivers the three
  equations above, whereas for $n=1$ and $n\ge 5$,  eq. \eqref{eq:Aloo} is automatically satisfied by degree reasons.
\end{remark}

\section{Background on {graded geometry} and   Lie $n$-algebroids}\label{app:Q}
 
We review some notions of graded geometry (N-manifolds and Q-manifolds) 
along the lines of \cite[\S 1.1]{ZZL}. Then we introduce Lie $n$-algebroids (for $n=1$ one recovers Lie algebroids as defined in the Introduction), and we recall their relation to Q-manifolds. 
 
 \subsection{N-manifolds}\label{nnman}

N-manifold (``N'' stands for non-negative) were
introduced by \v{S}evera  in  \cite{s:funny}. Here we
adopt the definition given by Mehta in  \cite[\S 2]{rajthesis}.

Given a finite dimensional $\ZZ_{<0}$-graded vector space  $V=\oplus_{i< 0}V_i$,  recall that $V^*$ is the $\ZZ_{>0}$-graded vector space defined by $(V^*)_i=(V_{-i})^*$. As earlier, we denote by  $S(V^*)$   the  {graded} symmetric algebra over $V^*$, a graded commutative algebra  concentrated in positive degrees. 
 
\begin{defi}\label{locm}
Let $V=\oplus_{i< 0}V_i$ be a finite dimensional $\ZZ_{<0}$-graded vector space and $n\ge 0$ an integer.\\
 The \emph{local model for an N-manifold} consists  of a pair  as follows:
\begin{itemize}
\item $U\subset \R^n$ an open subset
\item the sheaf (over $U$) of graded commutative algebras given by
$U'\mapsto C^{\infty}(U')\otimes S(V^*)$.
\end{itemize}
\end{defi}
\begin{defi}\label{def:grm}
An \emph{N-manifold} $\cM$ consists of a pair  as follows:
\begin{itemize}
\item a topological space $M$ (called \emph{body})
\item a sheaf $\cO_M$ over $M$ of graded commutative algebras, locally isomorphic to the
above local model (the sheaf of ``functions'').
\end{itemize}
\end{defi}

We use the notation $C(\cM):=\cO_M(M)$ to denote the space of ``functions
on $\cM$''.
By $C_k(\cM)$ we denote the degree $k$ component of $C(\cM)$, for any non-negative $k$. The
{\em degree} of the N-manifold is the largest $i$ such that  $V_{-i}\neq\{0\}$.
Degree zero N-manifolds are just
 ordinary  manifolds.

{We have a natural projection $p\colon \cM\to M$, induced by the inclusion $C^{\infty}(M)=C_0(\cM)\hookrightarrow C(\cM)$. Further we have an embedding\footnote{The embedding is defined, more generally, for any  $\ZZ$-graded manifold.}
 $M\hookrightarrow \cM$,  induced by the quotient map $C(\cM)\to C(\cM)/C_{\neq 0}(\cM)\cong C^{\infty}(M).$
}
 
 \noindent\begin{ex}\label{VBs}
Let $F=\oplus_{i< 0}F_{i} \rightarrow M$ be a   graded vector bundle.
The N-manifold associated to it has body $M$, and $\cO_M$ is given by the sheaf of sections of  $SF^*$. \end{ex}

\begin{remark}\label{ref:batch}
By Batchelor's theorem in the category of $N$-manifolds, any $N$-manifold of degree $n$ is non-canonically isomorphic to a graded vector bundle concentrated in degrees $-1,\dots,-n$ 
(see for instance \cite[\S 2.2]{BonavolontaPoncin}).\end{remark}

\begin{defi} For any integer $k$, 
A \emph{degree $k$ vector field} on $\cM$ is a
degree $k$ derivation of the algebra $C(\cM)$.
\end{defi}
Since $C(\cM)$ is a graded commutative algebra (concentrated in 
degrees $\ge 0$), the space of vector fields $\chi(\cM)$,
equipped with the graded commutator $[\cdot,\cdot]$,  is a graded
Lie algebra.

\subsection{{Homological vector fields and} Q-manifolds}\label{enq}
The $N$-manifolds appearing in this paper often come equipped with extra structure:
\begin{defi}\label{NQm} \cite{s:funny}
A\footnote{We are abusing language here, calling $Q$-manifold what is often referred to as   $NQ$-manifold.}
 {\em $Q$-manifold} is an N-manifold   equipped
with a \emph{homological vector field}, \emph{i.e.} a degree 1 vector field $Q$ such that $[Q, Q]=0$.
\end{defi}

A well-known example of $Q$-manifolds is given by Lie algebroids \cite{MK2}, as defined in the Introduction. More precisely, we have the following well-known correspondence    (\cite{vaintrob}, see also \cite{yvette}):
\begin{lemma}\label{nq1la}
There is a bijection between $Q$-manifolds of degree $1$ and Lie algebroids.
\end{lemma}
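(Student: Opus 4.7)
The plan is to exhibit the bijection using Batchelor's theorem together with Voronov's derived bracket construction (Remark \ref{rem:vorcon}), specialized to degree $1$. First, recall from Remark \ref{ref:batch} that any $N$-manifold $\cM$ of degree $1$ is (canonically, in this case) of the form $\cM = A[1]$ for some vector bundle $A \to M$; indeed, the sheaf of functions is generated in degree $0$ and $1$, and the degree $1$ part is precisely $\Gamma(A^*)$, which determines $A$. Thus $C(A[1]) = \Gamma(\wedge A^*)$ as a graded commutative algebra.

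Next, I would analyze degree $1$ vector fields $Q \in \vX_1(A[1])$. Any such derivation is determined by its action on generators of $C(A[1])$, namely on $C_0 = C^\infty(M)$ (where it lands in $C_1 = \Gamma(A^*)$) and on $C_1 = \Gamma(A^*)$ (where it lands in $C_2 = \Gamma(\wedge^2 A^*)$). Dualizing, these data define respectively a bundle map $\rho \maps A \to TM$ (the anchor) and an $\RR$-bilinear, skew-symmetric bracket $[\ ,\ ] \maps \Gamma(A) \times \Gamma(A) \to \Gamma(A)$. Concretely, following the derived bracket construction, one sets
\begin{align*}
\rho(a)(f) &:= [[Q, \iota_a], f], \\
[a,b]_A &:= [[Q, \iota_a], \iota_b],
\end{align*}
for $a,b \in \Gamma(A)$ and $f \in C^\infty(M)$, using the abelian subalgebra $\mathfrak{a} = \Gamma(A) \oplus C^\infty(M) \subset \vX(A[1])$ spanned by interior products and degree $0$ multiplication operators. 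A direct check shows the Leibniz rule $[a, fb]_A = f[a,b]_A + \rho(a)(f)\, b$ holds automatically, simply by expanding the graded commutators and using that $\iota_{fb} = f \iota_b$.

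The main (and only non-routine) step will be to show that the condition $[Q,Q] = 0$ is equivalent to the Jacobi identity for $[\ ,\ ]_A$ together with $\rho$ being a bracket morphism. This follows from the graded Jacobi identity in $\vX(A[1])$: one computes
\[
\iota_{\mathrm{Jac}(a,b,c)} = \tfrac{1}{2} [[[[Q,Q], \iota_a], \iota_b], \iota_c]
\]
for $a,b,c \in \Gamma(A)$, where $\mathrm{Jac}$ denotes the Jacobiator. Since the right-hand side manifestly vanishes when $[Q,Q] = 0$, and conversely one can recover $[Q,Q]$ from its iterated brackets with interior products together with its action on $C^\infty(M)$, the equivalence follows. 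The anchor compatibility $\rho[a,b]_A = [\rho(a),\rho(b)]$ is obtained analogously by testing against $f \in C^\infty(M)$.

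Finally, to conclude that the assignment is a bijection, I would explicitly construct the inverse: given a Lie algebroid $(A,[\ ,\ ]_A,\rho)$, define $Q$ as the Chevalley--Eilenberg differential $d_A$ on $\Gamma(\wedge A^*)$, \emph{i.e.}
\[
(d_A \omega)(a_0,\ldots,a_k) = \sum_i (-1)^i \rho(a_i)\, \omega(\ldots,\widehat{a_i},\ldots) + \sum_{i<j} (-1)^{i+j} \omega([a_i,a_j]_A,\ldots,\widehat{a_i},\ldots,\widehat{a_j},\ldots),
\]
and verify this is a degree $1$ derivation whose associated anchor and bracket (via the derived bracket formulas above) recover the original ones, and whose square vanishes precisely by the Lie algebroid axioms. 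The two constructions are inverse to each other by the uniqueness of a derivation extending its action on generators.
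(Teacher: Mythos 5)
Your proposal is correct and follows essentially the same route as the paper: the paper identifies degree $1$ $N$-manifolds with shifted vector bundles $A[1]$, recovers the anchor and bracket from $Q$ via the derived bracket formulas $[a,b]_A=[[Q,a],b]$ and $\rho(a)f=[[Q,a],f]$ (eq. \eqref{eq:br-rho}), and takes the Lie algebroid differential on $\Gamma(\wedge A^*)$ as the inverse construction, citing \cite{vaintrob} and \cite{yvette} for the details you spell out (Leibniz, Jacobi via the graded Jacobi identity, anchor compatibility). Your write-up simply makes explicit the verifications the paper delegates to the references.
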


We describe the  correspondence, which will be extended to Lie $n$-algebroids in Lemma \ref{nqnla}. 
There is a bijection between vector bundles and
degree 1 $N$-manifolds. If $A$ is a Lie algebroid, then the homological vector field  is just the   Lie algebroid differential acting on $ \Gamma(\wedge A^*)=C(A[1])$. Conversely, if
$(\cM:=A[1], Q)$ is an $Q$-manifold, then
the Lie algebroid structure on $A$ can be recovered \cite[\S 4.3]{yvette} by the derived bracket
construction: using the
 identification $\chi_{-1}(\cM)= \Gamma(A)$, we define
for all $a,b\in \Gamma(A)$ and $f\in C^{\infty}(M)$:
\begin{equation}\label{eq:br-rho}
[a,b]_A=[[Q,a],b], \;\;\;\;\quad  \rho_A(a)f=[[Q,a],f],
\end{equation}

In coordinates the correspondence is as follows.
Choose coordinates $x_{\alpha}$ on $M$ and a frame of sections $e_i$ of $A$, inducing (degree $1$) coordinates $\xi_i$ on the fibers of $A[1]$. Then
\begin{equation}\label{QA}
Q_A=-\frac{1}{2}\xi^i\xi^j
c_{ij}^k(x)\pd{\xi_k}+\rho_{i}^{\alpha}(x)\xi^i\pd{x_{\alpha}}
\end{equation}
where $[e_i,e_j]_A=c_{ij}^k(x)e_k$ and the anchor applied to $e_i$ is $\rho_{i}^{\alpha}(x)\pd{x_{\alpha}}$.

\begin{remark}
 For a $Q$-manifold $\cM$ of degree $n$, the space of vector fields  $$(\chi(\cM),  -[Q, \cdot], [\cdot,\cdot])$$
is a DGLA with concentrated in degrees  $\ge -n$. (We put a minus sign in front of $[Q, \cdot]$ for mere convenience, and omitting it still delivers a DGLA). 
\end{remark}

\subsection{Lie $n$-algebroids}\label{subsec:split}

We review  Lie $n$-algebroids are defined in 
\cite[\S 2]{CCShengHigherExt} (where they are called \emph{split Lie $n$-algebroids}). For $n=1$ this definition recovers Lie algebroids.
\begin{defi} 
A \emph{Lie $n$-algebroid} consists of 
 a graded vector bundle $$A:=A_0\oplus A_{-1}\oplus\dots\oplus A_{-n+1}\to M$$ (where the fibers of $A_i$ are concentrated in degree $i$) over a manifold $M$ together with
 \begin{itemize}
\item   a bundle map $\rho\colon A_0\to TM$,
\item   an $L_{\infty}$-algebra structure on $\Gamma(A)$ (with multibrackets 
$[\cdots]_k$ for $k=1,\dots,n+1$)
\end{itemize}
 such that the following compatibily conditions are satisfied:
\begin{itemize}
\item[a)] $[\cdots]_k$ is $C^{\infty}(M)$-multilinear for $k\neq 2$
\item[b)] $[\cdots]_2$ applied to two sections of $A_{\neq 0}$ is $C^{\infty}(M)$-bilinear, and satisfies the Leibniz rule w.r.t. $\rho$: for all $s_0\in \Gamma(A_0)$, $t\in \Gamma(A)$, $f\in C^{\infty}(M)$, $$[s_0,ft]_2=f[s_0,t]_2+\rho(s_0)f\cdot t.$$ \end{itemize}
\end{defi}

Lie $n$-algebroids are also examples of $Q$-manifolds, by the 
 following known correspondence 
\cite[\S 2]{CCShengHigherExt}\cite[\S 3.4]{BonavolontaPoncin}.  Here, $A=A_0\oplus \dots\oplus A_{-n+1}\to M$ is a   graded vector bundle. Notice that $A[1]$ is concentrated in degrees $-1,\dots,-n$, hence it is an $N$-manifold with body $M$.
\begin{lemma}\label{nqnla}
There is a bijection between   Lie $n$-algebroid structures on $A$ and  homological vector fields on the N-manifold $A[1]$.
\end{lemma}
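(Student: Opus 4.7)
I would establish the bijection in the same spirit as the $n=1$ case treated in Lemma \ref{nq1la}, namely by extracting the multibrackets from $Q$ through Voronov's derived bracket construction (Rem. \ref{rem:vorcon}) and, in the opposite direction, reconstructing $Q$ explicitly in local coordinates. Set $\cM := A[1]$, which is an N-manifold of degree $n$ with body $M$. The sections of $A[1]$ sit inside the graded Lie algebra $\vX(\cM)$ as the abelian subalgebra $\mathfrak{a}$ of ``vertical constant'' vector fields: a section $s$ of $A_{-k}$, which has degree $-k-1$ in $\Gamma(A[1])$, is identified with $\iota_s \in \vX_{-k-1}(\cM)$. Define the projection $P \colon \vX(\cM) \to \mathfrak{a}$ by restricting a vector field to the body $M \subset \cM$ and taking its vertical part with respect to the canonical splitting $T\cM|_M = TM \oplus A[1]$; one checks that $\ker P$ is a graded Lie subalgebra of $\vX(\cM)$.

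\textbf{From $Q$ to a Lie $n$-algebroid.} Given a homological vector field $Q$ on $\cM$, Rem. \ref{rem:vorcon} endows $\mathfrak{a} \cong \Gamma(A[1])$ with an $L_\infty[1]$-algebra structure whose multibrackets are
\[
\{s_1,\dots,s_m\}_m = P\bigl[[\,\cdots[Q,\iota_{s_1}],\iota_{s_2}],\dots],\iota_{s_m}\bigr].
\]
These vanish for $m > n+1$ since $\cM$ has degree $n$ and iterated derived brackets lower degree. Applying the d\'ecalage isomorphism \eqref{deca} gives the multibrackets $[\cdots]_k$ on $\Gamma(A)$. The anchor $\rho\colon A_0 \to TM$ is extracted from the action of $Q$ on $C^\infty(M) \subset C(\cM)$ by the formula $\rho(a)f := [[Q,\iota_{a[1]}],f]|_M$ for $a \in \Gamma(A_0)$ and $f \in C^\infty(M)$. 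The compatibility conditions (a) and (b) follow from the graded Jacobi identity in $\vX(\cM)$ together with the observation that $[\iota_s,f] = 0$ for $f \in C^\infty(M)$ whenever $s$ has degree strictly below $-1$, so only the binary bracket contributes a Leibniz correction term and only via $\Gamma(A_0)$.

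\textbf{From a Lie $n$-algebroid to $Q$.} Conversely, choose local coordinates $x_\alpha$ on $M$ and frames $\{e_i^{(k)}\}$ of $A_{-k}$ for $k = 0,\dots,n-1$, giving rise to fiber coordinates $\xi_i^{(k)}$ of degree $k+1$ on $A[1]$. Generalizing \eqref{QA}, define $Q$ locally by
\[
Q = \rho_i^\alpha(x)\,\xi_i^{(0)}\,\partial_{x_\alpha} \;-\; \sum_{m\ge 1} \tfrac{1}{m!}\, c^{(\underline{k};l)}_{\underline{i};j}(x)\, \xi_{i_1}^{(k_1)}\cdots \xi_{i_m}^{(k_m)}\, \partial_{\xi_j^{(l)}},
\]
where the coefficients encode the anchor and the structure constants of the multibrackets $[e_{i_1}^{(k_1)},\dots,e_{i_m}^{(k_m)}]_m$. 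The identity $[Q,Q] = 0$ unpacks, componentwise in the polynomial degrees in $\xi$, into the generalized Jacobi identities of the $L_\infty$-structure on $\Gamma(A)$ together with the compatibility of the anchor with the binary bracket. Independence of the chosen frame, and hence a globally defined $Q$, follows because the construction is the coordinate expression of Voronov's construction applied in reverse.

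\textbf{Inverses and main obstacle.} The two assignments are inverse to each other: starting from a Lie $n$-algebroid, the derived brackets of the coordinate $Q$ above reproduce the original multibrackets and anchor (a one-line verification per $m$); starting from $Q$, the reconstruction recovers $Q$ because a degree $1$ derivation of $C(\cM)$ is determined by its values on $C^\infty(M)$ and on $\Gamma(A^*) = C_1(\cM)\oplus\cdots\oplus C_n(\cM)$ restricted to linear generators, which is precisely the data captured by $\rho$ and the $[\cdots]_k$. The main technical nuisance, as in the $n=1$ case, is the sign bookkeeping when translating between the d\'ecalage-shifted multibrackets $\{\cdots\}_m$ and the $L_\infty$-brackets $[\cdots]_m$, and checking that condition (a) (the failure of $C^\infty(M)$-linearity only in arity $2$) matches the fact that $[\iota_s,f] = 0$ unless $s \in \Gamma(A_0)[1]$; once this is set up carefully the statement reduces to a direct computation.
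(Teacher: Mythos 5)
Your proposal is correct and follows essentially the same route as the paper: the direction from $Q$ to the Lie $n$-algebroid is exactly Voronov's derived bracket construction applied to $L=\vX(A[1])$, $\mathfrak{a}=\Gamma(A[1])$, the restrict-to-the-body-and-take-vertical-part projection $P$, and $\Delta=Q$, with the anchor read off from the action on $C^{\infty}(M)$. The paper only sketches this one direction (deferring to the cited references for the rest), so your coordinate reconstruction of $Q$ and the degree-counting arguments for multilinearity are a welcome, and correct, filling-in of details rather than a departure.
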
 

We review briefly one direction of this correspondence. It makes use of Voronov's derived bracket construction, as recalled in Remark \ref{rem:vorcon}. 
Let $Q$ be a homological vector field on $A[1]$. Applying this construction to the vector fields $L=\vX(A[1])$, to the vertical and fiberwise constant vector fields $\mathfrak{a}=\Gamma(A[1])$, to the projection $P$ obtained restricting to the body $M$ and taking the vertical part, and to $\Delta=Q$, one obtains an $L_{\infty}[1]$-algebra structure on $\Gamma(A[1])$. Upon applying the d\'ecalage isomorphism \eqref{deca}, one obtains an $L_{\infty}$-algebra structure on $\Gamma(A)$. The formula $\rho(a)f=[[Q,a],f]$ for $a\in \Gamma(A_0[1])$ 
and $f\in C^{\infty}(M)$ determines the anchor. 

\begin{remark}
By Batchelor's theorem (see Rem. \ref{ref:batch}) and Lemma \ref{nqnla}, any $Q$-manifold of degree $n$ gives rise to a split Lie $n$-algebroid. 
\end{remark}
  
\bibliographystyle{habbrv} 
\bibliography{bibLoo}
\end{document}